\newcommand{\R}  {{\mathbb R}}
\newcommand{\C}  {{\mathbb C}}
\newcommand{\K} {{\mathbb K}}
\newcommand{\N}  {{\mathbb N}}
\newcommand{\Z}  {{\mathbb Z}}
\newcommand{\eps}{\varepsilon}
\newcommand{\bx}{{\mathbf x}}
\newcommand{\by}{{\mathbf y}}
\newcommand{\bnu}{{\boldsymbol{\nu}}}
\newcommand{\NN}{{\boldsymbol{N}}}
\newcommand{\gjc}{G_j^c}
\newcommand{\fjc}{F_j^c}
\newcommand{\na}{\mathrm{I}}
\newcommand{\nb}{\mathrm{I\!I}}
\newcommand{\spann}{\operatorname{span}}
\newcommand{\Int}  {\operatorname{Int}} 
\newcommand{\App}  {\operatorname{App}} 
\newcommand{\aall} {\mathcal{A}^{\mathrm{all}}}
\newcommand{\astd} {\mathcal{A}^{\mathrm{std}}}
\newcommand{\bb}   {\mathcal{B}}
\newcommand{\bstd} {\bb^{\mathrm{std}}}
\newcommand{\Act}  {\operatorname{Act}}
\newcommand{\scp}[2]{\langle #1, #2 \rangle}
\DeclareMathOperator{\decay}{decay}
\DeclareMathOperator{\dec}{dec}
\DeclareMathOperator{\err}{err}
\DeclareMathOperator{\cost}{cost}
\theoremstyle{plain}
\newtheorem{lemma}{Lemma}[section]
\newtheorem{theo}[lemma]{Theorem}
\newtheorem{cor}[lemma]{Corollary}
\theoremstyle{definition}
\newtheorem{rem}[lemma]{Remark}
\newtheorem{exmp}[lemma]{Example}
\begin{document}

\title[Integration and Approximation with Increasing Smoothness]
{Embeddings for Infinite-Dimensional
Integration and $L_2$-Approximation 
with Increasing Smoothness}

\author[Gnewuch]
{M.~Gnewuch}
\address{
Mathematisches Seminar\\
Christian-Albrechts-Universit\"at zu Kiel\\
Ludewig-Meyn-Str.\ 4\\
24098 Kiel\\ 
Germany}
\email{gnewuch@math.uni-kiel.de}

\author[Hefter]
{M.~Hefter }
\address{Fachbereich Mathematik\\
Technische Universit\"at Kaisers\-lautern\\
Postfach 3049\\
67653 Kaiserslautern\\
Germany}
\email{hefter@mathematik.uni-kl.de}

\author[Hinrichs]
{A.~Hinrichs}
\address{
Institut f\"ur Analysis\\
Johannes-Kepler-Universit\"at Linz\\
Altenberger Str.\ 69\\
4040 Linz\\
Austria}
\email{aicke.hinrichs@jku.at}

\author[Ritter]
{K.~Ritter}
\address{Fachbereich Mathematik\\
Technische Universit\"at Kaisers\-lautern\\
Postfach 3049\\
67653 Kaiserslautern\\
Germany}
\email{ritter@mathematik.uni-kl.de}

\author[Wasilkowski]
{G.~W.~Wasilkowski}
\address{Department of Computer Science\\
Davis Marksbury Building\\
329 Rose St. \\
University of Kentucky\\
Lexington, KY 40506-0633, USA}
\email{greg@cs.uky.edu}

\date{September 17, 2018}

\keywords{High-dimensional integration,
infinite-dimensional integration,
embedding theorems,
reproducing kernel Hilbert spaces,
tractability}

\begin{abstract}
We study integration and $L_2$-approximation 
on countable tensor products of function spaces of increasing smoothness.
We obtain upper and lower bounds for the minimal errors, which
are sharp in many cases including, e.g., Korobov, Walsh, Haar, 
and Sobolev spaces. 
For the proofs we derive embedding theorems between 
spaces of increasing smoothness and 
appropriate weighted function spaces of fixed smoothness. 
\end{abstract}

\maketitle

\section{Introduction}

We study integration and $L_2$-approximation 
for functions of infinitely many variables.
The complexity of computational problems of this kind has first
been analyzed in \cite{HW01,HMNR10,KuoEtAl10}; for further
contributions we refer to, e.g., 
\cite{BG12, DG14, DG13, DunGri16,Gne12a, GneEtAl16, Hinrichs2018, MR3507353,PW11,Was10,Was12,Was13,MR2805529}.
First of all, this line of research may be viewed 
as the limit of tractability analysis of multivariate problems,
where the number of variables tends 
to infinity. Furthermore, computational problems with infinitely many 
variables naturally arise in a number of different applications. One
example are stochastic differential equations,
since the driving processes, often a finite- or
infinite-dimensional Brownian motion, is canonically represented in 
terms of a sequence of independent and identically distributed
random variables. Another example are
partial differential equations with random coefficients, where
similar representations are employed for the underlying 
random fields. 

Roughly speaking, problems with a large or infinite number of variables
are computationally tractable if the variables may be
arranged in such a way that their impact decays sufficiently fast. 

The first, and still most popular approach to capture this phenomenon
are weighted function spaces, where the weights
directly moderate the influence of groups of variables.
We refer to \cite{SW98} as the pioneering paper and, e.g.,
to \cite{DKS13, NovWoz08, NovWoz10, NovWoz12}
for further results and references in the multivariate case.
For problems with infinitely many variables weighted function spaces
have first been studied in \cite{HW01}, and the structure of the
corresponding spaces is analyzed in \cite{GMR14}. See, e.g.,
\cite{GneEtAl16} for recent results and references on 
infinite-dimensional integration.

As an alternative concept, an increasing smoothness with respect
to the properly ordered variables has first been studied in
tractability analysis in
\cite{PapWoz10}, and further results in this setting have been
derived in, e.g.,
\cite{DunGri16,HHPS18,KriPiWo14,IKPW16a,Sie14}.
We add that this kind of smoothness phenomenon is present for most
of the partial differential equations with random coefficients
that have been studied in the literature from a
computational point of view, see \cite{DunGri16,HHPS18} for further
information.
Moreover, increasing smoothness is a particular instance of anisotropic
smoothness, as studied in approximation theory, see, e.g.,
\cite[Sec.~10.1]{DTU18} for further information.

The function spaces under consideration in the present paper are
tensor products
\[
H := \bigotimes_{j \in \N} H_j
\]
for scales of Hilbert spaces $H_j$ of functions of a
single variable, defined on any domain $D$. 
Accordingly, the elements of $H$ are functions on the domain
$E := D^\N$. 
For integration and $L_2$-approximation
the underlying probability measure $\mu$ on $E$ is 
the countable product of an arbitrary 
probability measure $\mu_0$ on $D$.

Originally, we are interested in the case of
spaces $H_j$ of increasing smoothness in the sense that
\[
H_1 \supset H_2 \supset \dots
\]
with compact embeddings. 
The main aim of this paper is to show that this setting
may be reduced to tensor products
of suitable weighted function spaces $H_j$
via embeddings. 
Reductions of this type lead to sharp upper and lower bounds for minimal
errors for integration and $L_2$-approximation, despite the fact
that the weighted spaces $H_j$
are isomorphic as Banach spaces, 
while we have compact embeddings in the case 
of increasing smoothness.

The embeddings between the two kinds of rather different
tensor product spaces allow to derive new results for 
tensor products of spaces of increasing smoothness from known results for 
tensor products of weighted spaces that have a fixed smoothness. 
We carry out this program for
Korobov spaces, Walsh spaces, Haar spaces, and Sobolev spaces of functions 
with derivatives in weighted $L_2$-spaces.

The embedding approach, which has first been developed in
\cite{MR3325681}, has meanwhile been applied to a number
of different settings also beyond the Hilbert space 
and the tensor product case, see 
\cite{GneEtAl16,Hinrichs2018,MR3424634,MR3631818,MR3574544,
MR3507353,MR3450729}.
Embeddings between
spaces of increasing smoothness and weighted function spaces
have first been observed and exploited in \cite{Leo15}.

For integration we wish to approximate  
$\int_E f \, d \mu$ 
for $f \in H$, and for $L_2$-approxima\-tion we wish 
to recover $f\in H$ with error measured in $L_2(E,\mu)$.
We are primarily interested in algorithms that use standard information, 
i.e., algorithms that may only use 
a finite number of function values of any $f$,
which requires $H$ to be a reproducing kernel Hilbert space.

Since the functions $f \in H$ depend on infinitely-many variables, it is 
unreasonable to assume that they may be evaluated at any point 
$\by \in E$ at unit cost. Instead we employ the so-called
unrestricted subspace sampling model, which has been introduced
in \cite{KuoEtAl10}. For a fixed nominal value $a \in D$ function
values are only available at points $\by = (y_j)_{j \in \N} \in E$
with 
\[
\Act(\by) :=\#\{j\in\N\colon y_j\neq a\} < \infty,
\]
and $\Act(\by)$ (or a function thereof) is 
the cost of function evaluation at such an admissible point $\by$.
Accordingly, the cost of a linear deterministic algorithm
\[
A(f) = \sum_{i=1}^m f(\by_i) \cdot z_i
\]
with admissible points $\by_i \in E$ and with scalars $z_i$ for
integration and $z_i \in L_2(E,\mu)$ for $L_2$-approximation 
is given by $\cost(A) := \sum_{i=1}^m \Act (\by_i)$.

The key quantities in the worst case analysis on the unit ball
$B(H) \subset H$ are the
$n$-th minimal errors
\begin{align*}
\err_n(H,\Int,\astd)
:=
\inf_{\cost(A) \leq n} \sup_{f \in B(H)} 
\left|\int_E f \, d \mu - A(f)\right|
\end{align*}
for integration and
\begin{align*}
\err_n(H,\App,\astd)
:=
\inf_{\cost(A) \leq n} \sup_{f \in B(H)} \|f-A(f)\|_{L_2(E,\mu)} 
\end{align*}
for $L_2$-approximation.

Let us describe the function space setting in more detail.
We focus on scales of function spaces $H_j$
with the following structure, 
later on called the standard setting, 
which is based on an orthonormal basis 
$(e_\nu)_{\nu \in \N_0}$ of $H_0 := L_2(D,\mu_0)$
with $e_0=1$ and on a family $(\alpha_{\nu,j})_{\nu,j \in \N}$ 
of positive Fourier weights.
With $\scp{\cdot}{\cdot}_0$ denoting the scalar product on $H_0$,
we define $H_j$ to be the Hilbert space of all $f \in H_0$ such that
\[
\|f\|_j^2 := 
|\scp{f}{e_0}_0|^2 +
\sum_{\nu \in \N} \alpha_{\nu,j} \cdot |\scp{f}{e_\nu}_0|^2 
< \infty.
\]
Typically, the asymptotic properties of the Fourier weights
ensure that $(H_j)_{j \in \N_0}$ is a scale
of spaces of increasing smoothness.
In any case, $H \subseteq L_2(E,\mu)$ by assumption.

To give a flavor of our results, let us consider the uniform distribution 
$\mu_0$ on $D := [0,1]$ and the trigonometric basis 
given by
$e_\nu(x):=\exp(2\pi i (-1)^\nu \lceil \nu/2 \rceil x)$, together
with the Fourier weights
\[
a_{\nu,j} := \left(1 + \lfloor (\nu+1)/2 \rfloor\right)^{r_j},
\]
where
\[
0 < r_j < r_{j+1}
\]
for all $j\in\N$.
In this case the space $H_j$ is the Korobov space with
smoothness parameter $r_j$.
As a well-known fact, $H_j$ is a 
reproducing kernel Hilbert space if and only if $r_j > 1$, and for
an even integer $r_j \geq 2$ the elements of $H_j$ have a weak 
derivative of order $r_j/2$ in $L_2(D,\mu_0)$.
Given $r_1 > 1$, 
\[
\rho := \liminf_{j \to \infty} \frac{r_j}{\ln(j)} > \frac{1}{\ln(2)}
\]
is a sufficient condition for $H$ to be a reproducing kernel Hilbert 
space of functions on the domain $[0,1]^\N$. A necessary condition
also permits $\rho = 1/\ln(2)$.
See Example~\ref{ex11} and \ref{ex1}.
We determine the decay of the $n$-th minimal error for $S = \Int$
and $S=\App$ in Corollary~\ref{c2}.
It turns out that
this decay is equal to
\begin{align*}
\dec = \tfrac{1}{2}\cdot \min(r_1,\rho\cdot \ln(2)-1)
\end{align*}
for both problems, i.e., for every $\varepsilon>0$ there exists a 
constant $c>0$ such that 
\begin{align}\label{einl4}
\err_n(H,S,\astd)
\leq c\cdot n^{-(\dec-\varepsilon)}
\end{align}
for all $n\in\N$, and $\dec$ is minimal with this property.
We observe, in particular, that the minimal smoothness $r_1$
with respect to a single variable and
the increase of the smoothness along the variables,
as quantified by $\rho$, are the crucial parameters:
together they determine whether $H$ is a reproducing kernel Hilbert
space as well as the asymptotic behavior of the $n$-th minimal
errors. 

Let us provide some details of our proof strategy, which applies to the 
standard setting in general,
see Section~\ref{sec35} for the embeddings
and Section~\ref{sec42} for the results on integration and approximation.
The reproducing kernel $K$ of the Hilbert space $H = H(K)$ is
the tensor product
\[
K := \bigotimes_{j \in \N} k_j
\]
of the reproducing kernels $k_j$ of the spaces $H_j = H(k_j)$, 
see Section \ref{s2.2}.
For the proof of the upper bound \eqref{einl4},
we determine a sequence of weights $\theta_j > 0$, as small as possible,
and show the existence of a reproducing kernel $m$ for
functions of a single variable
with the following properties, see Theorem \ref{t1}.
The space $H(K)$ is continuously embedded into the Hilbert space
$H(M)$ with reproducing kernel
\[
M := \bigotimes_{j \in \N} (1 + \theta_j \cdot m),
\]
and $H(1+m) = H(k_1)$ as vector spaces. Furthermore, $m$ is
anchored at a given point $a \in D$, i.e., $m(a,a)=0$.
It follows that,
$\err_n(H(K),S,\astd)$ is at most of the order of 
$\err_n(H(M),S,\astd)$.
In this way we relate the tensor product space $H(K)$ of spaces of
increasing smoothness to the tensor product space $H(M)$, which is
based on weighted anchored kernels.
A reverse embedding with a two-dimensional space $H(1+\ell) \subset
H(k_1)$ is part of the proof that $\dec$ is maximal with
the property~\eqref{einl4}.

Integration and $L_2$-approximation is thoroughly studied in 
the literature for tensor products of weighted anchored
spaces, where the multivariate decomposition method
has been established as a powerful generic algorithm, see,
e.g., \cite{GilEtAl2017}.
In particular, it is known in this setting
how the asymptotic behavior of
the minimal errors depends on summability properties of
the sequence $(\theta_j)_{j \in \N}$ of weights
and on the minimal errors for the univariate problem,
see \cite{PW11,Was12,Gne12a}.
Interestingly, we obtain sharp results via embeddings in this way, 
although $H(1+\theta_j \cdot m) = H(k_1)$ as vector spaces for
every $j \in \N$, so that we embed $H(K)$ into the much larger space 
$H(M)$ as we trade increasing smoothness for decaying weights.

This paper is organized as follows.
In Section~\ref{s2} we determine when a Hilbert space may be
canonically identified with a reproducing 
kernel Hilbert space; here subspaces of $L_2$-spaces and
countable tensor products are particularly relevant for the 
present paper.
In Sections~\ref{s3.1} and \ref{s3.1a} we present the  
function space framework to introduce and study 
Hilbert spaces of increasing smoothness. Classical examples
are given by
Korobov spaces, Walsh spaces, Haar spaces, and
Sobolev spaces with derivatives in weighted
$L_2$-spaces, see Section \ref{s3.2}.
In Sections~\ref{secembed} and \ref{sec35}
we construct the appropriate tensor products of weighted
(anchored) spaces and provide 
embedding theorems between these spaces and tensor products of
spaces of increasing smoothness.
The embeddings are applied in Section~\ref{s4}
to determine the decay of the minimal errors for
integration and $L_2$-approximation. For the latter problem
we actually compare two classes of algorithms that
may either use standard information, as outlined
above, or, potentially more powerful, use arbitrary bounded
linear functionals at cost one. 
In Appendix~\ref{a1} we recall basic properties of countable tensor 
products of Hilbert spaces, and Appendix~\ref{a2} contains some 
facts on summability and decay of sequences of 
real numbers.
In Appendix~\ref{a3} we consider $L_2$-approximation
in Haar spaces of functions of a single variable.

\pagebreak

\section{Tensor Products and Reproducing Kernels}\label{s2}

\subsection{Reproducing Kernels}\label{s2.0}

Consider a separable Hilbert space $(\mathcal H,\scp{\cdot}{\cdot})$
over $\K\in\{\R,\C\}$
with an orthonormal basis $(h_\nu)_{\nu \in N}$ for some
countable set $N$. Moreover, let 
$E \neq \emptyset$ be any set. For any
injective linear mapping $\Phi:\mathcal{H} \to \K^E$
we define a scalar product $\scp{\cdot}{\cdot}_{\Phi}$ on 
$\Phi(\mathcal{H})$ by 
\[
\scp{\Phi f}{\Phi g}_{\Phi}:= \scp{f}{g}
\]
for all $f,g \in \mathcal{H}$. In this way
we may identify the (abstract) Hilbert space $\mathcal{H}$ with the 
Hilbert space $\Phi(\mathcal{H})$ of real- or complex-valued functions on 
the domain $E$.

The following two lemmata provide a necessary and a sufficient
condition for the function
space $\Phi(\mathcal{H})$ to be a reproducing kernel Hilbert space.

\begin{lemma}\label{l3a}
Suppose that $\Phi:\mathcal{H} \to \K^E$ is linear and injective.
Moreover, assume that
$(\Phi(\mathcal{H}), \scp{\cdot}{\cdot}_{\Phi})$ is 
a reproducing kernel Hilbert space. Then we have
\begin{equation}\label{g3a}
\forall\,y \in E: \quad
\sum_{\nu\in N} |\Phi h_\nu (y)|^2 < \infty
\end{equation}
and
\begin{equation}\label{g21}
\forall\,y \in E\ \forall\, f \in \mathcal{H}: \quad
\Phi f (y) = \sum_{\nu \in N} \scp{f}{h_\nu} \cdot \Phi h_\nu (y)
\end{equation}
with absolute convergence.
Furthermore, the reproducing kernel $K$ of this space is given by
\begin{equation}\label{g22a}
K (x,y) 
= 
\sum_{\nu\in N} 
\Phi h_\nu (x) \cdot \overline{\Phi h_\nu (y)}
\end{equation}
with absolute convergence for all $x,y \in E$.
\end{lemma}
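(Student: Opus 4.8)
The plan is to exploit the Riesz representation theorem inside the function space $\Phi(\mathcal{H})$ and then to read off all three assertions from the Fourier expansion of the evaluation representer. First I would record that $\Phi$ is, by the very definition of $\scp{\cdot}{\cdot}_\Phi$, an isometric isomorphism of $\mathcal{H}$ onto $(\Phi(\mathcal{H}),\scp{\cdot}{\cdot}_\Phi)$, so that $(\Phi h_\nu)_{\nu\in N}$ is an orthonormal basis of $\Phi(\mathcal{H})$. Since this space is assumed to be a reproducing kernel Hilbert space, for each fixed $y\in E$ the evaluation functional $\Phi f \mapsto \Phi f(y)$ is bounded, and Riesz representation yields a unique $K_y\in\Phi(\mathcal{H})$ with $\Phi f(y)=\scp{\Phi f}{K_y}_\Phi$ for all $f\in\mathcal{H}$.

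Next I would compute the Fourier coefficients of $K_y$ with respect to this basis. Applying the reproducing identity to $f=h_\nu$ gives $\scp{K_y}{\Phi h_\nu}_\Phi=\overline{\scp{\Phi h_\nu}{K_y}_\Phi}=\overline{\Phi h_\nu(y)}$, hence $K_y=\sum_{\nu\in N}\overline{\Phi h_\nu(y)}\,\Phi h_\nu$. Parseval's identity then reads $\sum_{\nu\in N}|\Phi h_\nu(y)|^2=\|K_y\|_\Phi^2<\infty$, which is exactly \eqref{g3a}. For \eqref{g21} I would expand $\Phi f(y)=\scp{\Phi f}{K_y}_\Phi$ using this expansion together with the isometry $\scp{\Phi f}{\Phi h_\nu}_\Phi=\scp{f}{h_\nu}$, obtaining $\Phi f(y)=\sum_{\nu\in N}\scp{f}{h_\nu}\,\Phi h_\nu(y)$; absolute convergence is immediate from Cauchy--Schwarz and \eqref{g3a}, since $\sum_{\nu\in N}|\scp{f}{h_\nu}|\cdot|\Phi h_\nu(y)|\le\|f\|\cdot\|K_y\|_\Phi$.

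Finally, for \eqref{g22a} I would use that the reproducing kernel satisfies $K(x,y)=K_y(x)$, and, since $K_y$ is itself an element of $\Phi(\mathcal{H})$, apply \eqref{g21} to it. Its Fourier coefficients are $\scp{K_y}{\Phi h_\nu}_\Phi=\overline{\Phi h_\nu(y)}$, so $K(x,y)=K_y(x)=\sum_{\nu\in N}\Phi h_\nu(x)\,\overline{\Phi h_\nu(y)}$, with absolute convergence following from Cauchy--Schwarz and \eqref{g3a} applied at both $x$ and $y$.

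The argument is essentially mechanical once the representer $K_y$ is in hand, so I expect no serious obstacle. The only points that demand care are the bookkeeping of complex conjugates in the sesquilinear pairing and the observation that the series must first be read as converging in the norm of $\Phi(\mathcal{H})$, pointwise evaluation of the limit being legitimate precisely because $\Phi(\mathcal{H})$ is a reproducing kernel Hilbert space --- which is exactly where the hypothesis is used.
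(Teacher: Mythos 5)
Your proposal is correct and follows essentially the same route as the paper: both arguments rest on the fact that $(\Phi h_\nu)_{\nu\in N}$ is an orthonormal basis of $(\Phi(\mathcal{H}),\scp{\cdot}{\cdot}_\Phi)$ and that the kernel section $K(\cdot,y)$ is exactly the Riesz representer $K_y$ of point evaluation, whose Fourier coefficients are $\overline{\Phi h_\nu(y)}$. The only difference is the order of the conclusions --- the paper proves \eqref{g21} first via norm convergence plus continuity of evaluations, obtains \eqref{g22a} as a special case, and gets \eqref{g3a} by setting $x=y$, whereas you get \eqref{g3a} first from Parseval applied to $K_y$ --- which is a cosmetic permutation of the same argument.
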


\begin{proof}
For every $f \in \mathcal{H}$ we have
\[
\Phi f = 
\sum_{\nu \in N} 
\scp{\Phi f}{\Phi h_\nu}_{\Phi} \cdot \Phi h_\nu =
\sum_{\nu \in N}
\scp{f}{h_\nu} \cdot \Phi h_\nu
\]
with convergence in $\Phi(\mathcal{H})$. By assumption, point evaluations
are continuous on the latter space, which yields \eqref{g21}.
In particular, for $\Phi f = K(\cdot,y)$ with $y \in E$ we obtain
\[
K(\cdot,y) = 
\sum_{\nu \in N} 
\scp{K(\cdot,y)}{\Phi h_\nu}_{\Phi} \cdot \Phi h_\nu 
= \sum_{\nu\in N} \overline{\Phi h_\nu (y)} \cdot \Phi h_\nu,
\]
which yields \eqref{g22a}. Choose $x:=y$ to derive \eqref{g3a}
from \eqref{g22a}. 
The Cauchy-Schwarz inequality and \eqref{g3a} guarantee the absolute 
convergence in \eqref{g21} and \eqref{g22a}.
\end{proof}

Every mapping $\Phi$ that leads to a reproducing kernel Hilbert
space $\Phi(\mathcal{H})$ is already determined by the values $\Phi h_\nu$ 
for $\nu \in N$, see \eqref{g21}.
In the construction of such a mapping we therefore
start with an injective mapping
$\Phi : \{h_\nu: \nu \in N\} \to \K^E$, and we assume
that \eqref{g3a} is satisfied. The mapping $\Phi$ is extended to a linear 
mapping
$\Phi : \mathcal{H}\to \K^E$
by
\begin{equation}\label{g40}
\Phi f (y) := \sum_{\nu \in N} \scp{f}{h_\nu} \cdot \Phi h_\nu (y).
\end{equation}
Assumption \eqref{g3a} yields the absolute 
convergence of the right-hand side in \eqref{g40} for all 
$f \in \mathcal{H}$ 
and $y \in E$. Actually we have
\begin{equation}\label{g29}
\sum_{\nu \in N} |\scp{f}{h_\nu} \cdot \Phi h_\nu(y)| 
\leq
\left( \sum_{\nu \in N} 
|\scp{f}{h_\nu}|^2 \right)^{1/2} \cdot 
\left( \sum_{\nu \in N} |\Phi h_\nu(y)|^2  \right)^{1/2}
\end{equation}
for $f \in \mathcal{H}$ and $y \in E$.

\begin{lemma}\label{l3b}
Suppose that \eqref{g3a} is satisfied and that $\Phi$ given by
\eqref{g40} is injective. Then 
$(\Phi(\mathcal{H}), \scp{\cdot}{\cdot}_{\Phi})$ is a
reproducing kernel Hilbert space.
\end{lemma}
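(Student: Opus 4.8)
The plan is to verify directly the two defining properties of a reproducing kernel Hilbert space: that $(\Phi(\mathcal{H}), \scp{\cdot}{\cdot}_\Phi)$ is a genuine Hilbert space whose elements are $\K$-valued functions on $E$, and that every point evaluation on it is a bounded linear functional.

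First I would check that $\scp{\cdot}{\cdot}_\Phi$ is a well-defined inner product. The injectivity of the extended map $\Phi$ guarantees that the value $\scp{\Phi f}{\Phi g}_\Phi := \scp{f}{g}$ does not depend on the choice of preimages, and positive definiteness transfers from $\mathcal{H}$ because $\Phi f = 0$ forces $f = 0$. Consequently $\Phi : \mathcal{H} \to \Phi(\mathcal{H})$ is a surjective linear isometry, satisfying $\|\Phi f\|_\Phi = \|f\|$ for all $f \in \mathcal{H}$, and completeness of $\mathcal{H}$ transports along this isometry to $\Phi(\mathcal{H})$. Here it is important that the elements of $\Phi(\mathcal{H})$ are honest functions on $E$, defined pointwise through \eqref{g40}, so that the point evaluation $\delta_y(F) := F(y)$ is unambiguous.

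The main step is then to bound $\delta_y$. For $F = \Phi f$ and any $y \in E$, the Cauchy--Schwarz estimate \eqref{g29} together with assumption \eqref{g3a} yields
\[
|F(y)| = |\Phi f (y)| \leq \|f\| \cdot \Bigl( \sum_{\nu \in N} |\Phi h_\nu(y)|^2 \Bigr)^{1/2} = C_y \cdot \|F\|_\Phi,
\]
where $C_y := \bigl( \sum_{\nu \in N} |\Phi h_\nu(y)|^2 \bigr)^{1/2} < \infty$ by \eqref{g3a} and $\|F\|_\Phi = \|f\|$ by the isometry. Thus every point evaluation is a bounded linear functional, which is precisely the defining property, and the proof is complete.

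I do not expect a genuine obstacle: the heavy lifting was already done in establishing the absolute convergence \eqref{g29} of the series defining $\Phi f$. The only points demanding care are the well-definedness of the inner product, where injectivity is essential (and is exactly the hypothesis that had to be added relative to Lemma~\ref{l3a}), and the transport of completeness along the isometry $\Phi$; both are routine once the pointwise formula \eqref{g40} and the bound \eqref{g29} are in hand. One could additionally record that the reproducing kernel is given by \eqref{g22a}, obtained by the same Riesz-representation computation as in Lemma~\ref{l3a}, but this is not needed for the statement itself.
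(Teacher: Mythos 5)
Your proof is correct and follows essentially the same route as the paper: the isometry $\|\Phi f\|_{\Phi} = \|f\|$ (Parseval) combined with \eqref{g3a} and the Cauchy--Schwarz bound \eqref{g29} to show every point evaluation is bounded. The paper's proof is simply a terser version of yours, leaving the well-definedness of the inner product and the transport of completeness implicit in the identification set up at the start of Section~\ref{s2.0}.
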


\begin{proof}
Let $\|\cdot\|_{\Phi}$ denote the norm that 
is induced by $\scp{\cdot}{\cdot}_{\Phi}$.
Observe that
\[
\|\Phi f\|_{\Phi} = 
\left( \sum_{\nu \in N} |\scp{f}{h_\nu}|^2 \right)^{1/2}
\] 
for $f \in \mathcal{H}$. Use \eqref{g3a} and \eqref{g29} to conclude that 
$\Phi f \mapsto \Phi f (y)$
defines a bounded linear functional on $\Phi (\mathcal{H})$ for every $y
\in E$.
\end{proof}

\begin{rem}\label{r60}
In general, \eqref{g3a} does not imply that $\Phi$ defined according 
to \eqref{g40} is injective. An obvious necessary assumption is that 
the set  $\{\Phi h_\nu: \nu \in N\}$ is linearly independent in $\K^E$.
The following example shows that even this is not sufficient.

Let $N := \N$, and let $\mathcal{H} := \ell_2$ with the canonical unit 
vector basis $(h_\nu)_{\nu\in\N}$ and define 
$\Phi : \{h_\nu: \nu \in \N\} \to \K^\N$ by 
$\Phi h_\nu := \nu(h_\nu - h_{\nu-1})$ for $\nu\in\N$ 
with the convention $h_0:=0$. 
For each $y\in \N$ the sum in \eqref{g3a} is a finite sum, 
so \eqref{g3a} is satisfied. 
It is also easy to see that $\{\Phi h_\nu: \nu \in \N\}$ 
is linearly independent in $\K^\N$.
For $f \in \mathcal{H}$ with $\scp{f}{h_\nu}=\frac{1}{\nu}$ and $y\in \N$
we obtain from \eqref{g40} that
\[
\Phi f (y) = \sum_{\nu \in \N} \scp{f}{h_\nu} \cdot \Phi h_\nu (y) = 0.
\]
Hence $\Phi$ is not injective.
\end{rem}

\begin{rem}\label{r61}
Assume that $\Phi$ is linear and injective.
Then, in general, condition \eqref{g3a} is not sufficient to 
guarantee that 
$(\Phi(\mathcal{H}),\scp{\cdot}{\cdot}_{\mathcal{H}})$ 
is a reproducing kernel Hilbert space. 
We present a general counterexample.

We start with a reproducing kernel 
Hilbert space $\mathcal{H} \subseteq \K^{E}$ with an orthonormal basis 
$(h_\nu)_{\nu \in \N}$ and consider 
$\Phi: \mathcal{H} \to \K^{E}$, $f \mapsto f$.
Due to Lemma \ref{l3a} we have
\[
\forall\,y \in E: \quad
\sum_{\nu\in \N} |h_\nu (y)|^2 < \infty.
\]
Let $E_* = E \cup \{z \}$ with a point $z \notin E$.
Choose an arbitrary discontinuous linear functional 
$\zeta$ on $\mathcal{H}$
satisfying $\zeta(h_\nu)=0$ for all $\nu\in \N$. 
Let $\Psi f\in \K^{E_*}$ be the extension 
of $f \in \mathcal{H}$ to $E_*$ 
with $\Psi f(z) = \zeta(f)$.
Obviously, $\Psi$ is a linear and injective mapping from
$\mathcal{H}$ to $\K^{E_*}$. It follows that
$\mathcal{H}_* := \Psi(\mathcal{H})$, equipped with the scalar
product $\langle \cdot, \cdot \rangle_\Psi$ induced by $\Psi$, 
is a Hilbert space, too,
with orthonormal basis $(\Psi h_\nu)_{\nu \in \N}$. 
Since $\Psi h_\nu ( z) = 0$ for 
all $\nu \in \N$, we have 
\[
\forall\,y \in E_*: \quad
\sum_{\nu\in \N} |\Psi h_\nu (y)|^2 < \infty.
\]
But $\mathcal{H}_*$ is not a reproducing kernel Hilbert space since, 
by construction, the function evaluation 
$\Psi f \mapsto \Psi f ( z) = \zeta(f)$
is discontinuous on $\mathcal{H}_*$.  

Notice that $\Psi$ is not of the form \eqref{g40}. Indeed,
$\sum_{\nu \in \N} \langle f, h_\nu \rangle \Psi h_\nu(z) =0$ for all 
$f\in \mathcal{H}$, but since $\zeta$ is discontinuous, there has 
to exist at least one $g \in \mathcal{H}$ satisfying 
$\Psi g(z) = \zeta(g) \neq 0$.
\end{rem}

\begin{rem}\label{r51}
The particular case where $\mathcal{H}$ already consists of real- or
complex-valued functions on $E$ with the natural choice of 
$\Phi f := f$ for every $f \in \mathcal{H}$ is 
also studied in
\cite[Rem.~1]{IKPW16a}. It is shown that $\mathcal{H}$ is a reproducing
kernel Hilbert space if and only if \eqref{g3a} and \eqref{g21} are 
satisfied.
\end{rem}

Lemma \ref{l3b} allows to go beyond the setting from Remark
\ref{r51} in order to cover the most important case of $\mathcal{H}$ being
a subspace of an $L_2$-space.
Here it turns out that \eqref{g3a} already
implies that the pointwise limits of the Fourier partial sums form a 
reproducing kernel Hilbert space.

\begin{rem}\label{r50}
Consider the space $L_2(E,\mu)$ with respect to
any measure $\mu$ on any $\sigma$-algebra on $E$, and assume that 
$\mathcal{H}$ is a linear subspace of $L_2(E,\mu)$ with a continuous 
embedding. Consider a sequence of square-integrable 
functions $\mathfrak{h}_\nu$ on $E$ with the following properties:
The corresponding equivalence classes $h_\nu \in L_2(E,\mu)$ form an
orthonormal basis of $\mathcal{H}$, and
\[
\forall\,y \in E: \quad
\sum_{\nu\in N} |\mathfrak{h}_\nu (y)|^2 
< \infty,
\]
cf.~\eqref{g3a}.
We claim that $\Phi$ given by \eqref{g40} with 
$\Phi h_\nu := \mathfrak{h}_\nu$, i.e.,
\[
\Phi f (y) := 
\sum_{\nu \in N} \scp{f}{h_\nu} \cdot \mathfrak{h}_\nu(y),
\]
is injective. 

In fact, consider a square-integrable function $\mathfrak{f}$ on
$E$, whose corresponding equivalence class $f \in L_2(E,\mu)$ 
satisfies $f \in \mathcal{H}$ and $\Phi f =0$.
The partial sums of the series
$\sum_{\nu \in N} \scp{f}{h_\nu} \cdot \mathfrak{h}_\nu$
converge in mean-square to $\mathfrak{f}$. Due 
to the Fischer-Riesz Theorem there exists a subsequence
of partial sums
that converges almost everywhere to $\mathfrak{f}$. Since 
$\Phi f=0$ means that the partial sums converge to zero at every
point in $E$, we get $\mathfrak{f}=0$ almost everywhere,
i.e., $f=0$.

Apply Lemma \ref{l3b} to conclude that 
$(\Phi(\mathcal{H}), \scp{\cdot}{\cdot}_{\Phi})$ is a 
reproducing kernel Hilbert space. 
We add that the inverse 
$\Phi^{-1}\colon \Phi(\mathcal{H})\to L_2(E,\mu)$ of $\Phi$ 
is continuous and maps $\mathfrak{f}\in \Phi(\mathcal{H})$ to its 
equivalence class.
\end{rem}

\subsection{Countable Tensor Products}\label{s2.2}

Consider a sequence of separable Hilbert spaces
$(H_j,\scp{\cdot}{\cdot}_j)$ with $j \in \N$ together
with orthonormal bases $(h_{\nu,j})_{\nu \in N_j}$ with countable
sets $N_j$. For notational convenience assume that $N_j \subseteq \N_0$ 
and $0 \in N_j$.
Later on we will have $N_j = \N_0$ for all $j \in \N$ most of the
time. However, we also consider the
case $N_j = \{0,1\}$ for all $j \in\N$.

The countable tensor product
\[
H := \bigotimes_{j \in \N} H_j
\]
that is studied in this paper is the so-called incomplete tensor product 
introduced by von Neumann in \cite{Neu39}
with the particular choice of the unit vector 
$h_{0,j}$ in the space $H_j$.
The choice of $\nu=0$ is without loss of generality at this point.
The construction of this tensor product and the properties we use are 
summarized in Appendix \ref{a1}. Here we only mention two
facts. First of all, $H$ is a complete space, i.e., 
a Hilbert space. Moreover, let
$\NN$ denote the set of all sequences $\bnu := (\nu_j)_{j \in
\N}$ in $\N_0$ such that $\nu_j \in N_j$ for every $j \in \N$
and $\sum_{j \in \N} \nu_j < \infty$.
Then the elementary tensors
\[
h_{\bnu} := \bigotimes_{j \in \N} h_{\nu_j,j}
\]
with $\bnu \in \NN$ form an orthonormal basis of the space
$H$.

In the sequel, we use $\scp{\cdot}{\cdot}$ to denote the scalar product 
on the tensor product space $H$.
Of course, the results from Section \ref{s2.0} are applicable
with any set $E$ and any injective linear mapping $\Phi:
H \to \K^E$.
In the present setting it is reasonable, however, 
to require that $\Phi$ respects the tensor 
product structure. Hence we assume in particular that 
\[
E := D^\N
\]
with a set $D \neq \emptyset$.

If we have reproducing kernels $k_j:D\times D\to \K$
for $j \in \N$ such that
\[
K(\bx,\by) := \prod_{j \in \N} k_j(x_j,y_j)
\]
converges for all $\bx, \by \in E$, we write
$$K:= \bigotimes_{j\in\N} k_j.$$ 

We adapt Lemma \ref{l3a} and Lemma \ref{l3b} to the tensor product setting.

\begin{lemma}\label{l5a}
Suppose that $\Phi:H \to \K^{E}$ is linear and injective
and that there exist 
mappings $\Phi_j : \{h_{\nu,j}: \nu \in N_j\} \to \K^D$ such that
\begin{equation}\label{g34}
\forall\, j \in \N: \quad \Phi_j h_{0,j} = 1
\end{equation}
and
\begin{equation}\label{g35}
\forall\, \bnu \in \NN\ \forall\, \by \in E: \quad
\Phi h_\bnu (\by) = \prod_{j \in \N} \Phi_j h_{\nu_j,j}(y_j).
\end{equation}
Furthermore, assume that
$(\Phi(H), \scp{\cdot}{\cdot}_{\Phi})$ is a 
reproducing kernel Hilbert space. Then we have
\begin{equation}\label{g30a}
\forall\,\by \in E: \quad
\sum_{j\in \N}\sum_{\nu \in N_j \setminus \{0\}} 
|\Phi_j h_{\nu,j} (y_j)|^2 < \infty
\end{equation}
and
\begin{equation}\label{g31a}
\forall\,\by \in E\ \forall\, f \in H: \quad
\Phi f (\by) = \sum_{\bnu \in \NN} \scp{f}{h_\bnu} \cdot 
\prod_{j \in \N} \Phi_j h_{\nu_j,j} (y_j)
\end{equation}
with absolute convergence.
Moreover, the reproducing kernel $K$ of this space is 
given by
\begin{equation}\label{g32a}
K = \bigotimes_{j \in \N} k_j,
\end{equation}
where
\[
k_j(x_j,y_j) := 1 + \sum_{\nu \in N_j \setminus \{0\}} 
\Phi_j h_{\nu,j}(x_j) \cdot \overline{\Phi_j h_{\nu,j}(y_j)},
\]
with absolute convergence for all 
$x_j,y_j \in D$. 
\end{lemma}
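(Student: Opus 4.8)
The goal is to adapt Lemma~\ref{l3a} to the tensor product setting by specializing its general conclusions to the factored form of $\Phi$. The plan is to treat $\Phi : H \to \K^E$ as an instance of the abstract situation in Lemma~\ref{l3a}, with $N = \NN$ and the orthonormal basis $(h_\bnu)_{\bnu \in \NN}$ of $H$. Since we assume $\Phi$ is linear, injective, and $(\Phi(H),\scp{\cdot}{\cdot}_\Phi)$ is a reproducing kernel Hilbert space, Lemma~\ref{l3a} applies verbatim and hands us the three general identities \eqref{g3a}, \eqref{g21}, \eqref{g22a} indexed over $\bnu \in \NN$. The work is then to rewrite each of these using the hypotheses \eqref{g34} and \eqref{g35}, which express $\Phi h_\bnu$ as a product over $j$ of the univariate factors $\Phi_j h_{\nu_j,j}$.

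\textbf{Deriving the three conclusions.} First I would obtain \eqref{g31a}: this is literally \eqref{g21} with $N=\NN$, after substituting $\Phi h_\bnu(\by) = \prod_{j} \Phi_j h_{\nu_j,j}(y_j)$ from \eqref{g35}; absolute convergence is inherited from Lemma~\ref{l3a}. Next, for the kernel formula, \eqref{g22a} gives
\[
K(\bx,\by) = \sum_{\bnu \in \NN} \prod_{j \in \N} \Phi_j h_{\nu_j,j}(x_j)\cdot \overline{\Phi_j h_{\nu_j,j}(y_j)}.
\]
The key combinatorial step is to recognize this sum over $\NN$ as an infinite product: because $\bnu$ ranges over all sequences with $\nu_j \in N_j$ and only finitely many $\nu_j \neq 0$, and because the $j$-th factor equals $1$ when $\nu_j = 0$ by \eqref{g34}, the sum factors as $\prod_{j}\bigl(1 + \sum_{\nu \in N_j\setminus\{0\}} \Phi_j h_{\nu,j}(x_j)\overline{\Phi_j h_{\nu,j}(y_j)}\bigr)$, which is exactly $\bigotimes_{j} k_j$ with the stated $k_j$. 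This is the standard expansion of a product of sums into a sum of products, valid here precisely because of the finite-support condition defining $\NN$.

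\textbf{The summability conclusion and the main obstacle.} To get \eqref{g30a}, I would specialize \eqref{g3a} to the diagonal: setting $x=y=\by$ in the kernel expansion gives $\sum_{\bnu}\prod_j |\Phi_j h_{\nu_j,j}(y_j)|^2 = \prod_j \bigl(1 + \sum_{\nu \neq 0}|\Phi_j h_{\nu,j}(y_j)|^2\bigr) < \infty$. The finiteness of this infinite product of terms each $\geq 1$ forces $\sum_j \sum_{\nu \in N_j\setminus\{0\}} |\Phi_j h_{\nu,j}(y_j)|^2 < \infty$, since $\sum_j \log(1+a_j) < \infty$ with $a_j \geq 0$ is equivalent to $\sum_j a_j < \infty$. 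This elementary product-to-sum passage is the one genuinely non-formal point: I expect the main obstacle to be justifying that the sum over $\NN$ really does rearrange into the infinite product, i.e.\ that the absolute convergence supplied by Lemma~\ref{l3a} legitimizes the interchange of the (unordered) summation over $\NN$ with the formal expansion of $\prod_j(1 + \cdots)$. Once absolute convergence is in hand, the rearrangement and the factorization of both the kernel sum and its diagonal are routine, and the absolute convergence of each univariate $k_j$ for all $x_j,y_j \in D$ follows from \eqref{g30a} together with Cauchy--Schwarz.
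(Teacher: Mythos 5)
Your proposal is correct and takes essentially the same route as the paper: apply Lemma~\ref{l3a} with $N=\NN$ and the orthonormal basis $(h_\bnu)_{\bnu\in\NN}$, then rearrange the resulting sums over $\NN$ into products over $j$ using the finite-support structure of $\NN$ and the normalization \eqref{g34}. The rearrangement you single out as the main obstacle is precisely what the paper delegates to Lemma~\ref{l1} in Appendix~B (applied with $\beta_{\nu,j}:=|\Phi_j h_{\nu,j}(y_j)|^2$, resp.\ $\beta_{\nu,j}:=\Phi_j h_{\nu,j}(x_j)\cdot\overline{\Phi_j h_{\nu,j}(y_j)}$), whose ``sum over $\NN$ equals product over $j$'' identity and summability equivalence give \eqref{g30a} and \eqref{g32a} at once; your inline induction-plus-limit and log-sum arguments are a re-derivation of that lemma rather than a gap.
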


\begin{proof}
Combine \eqref{g3a} and Lemma \ref{l1} with 
$\beta_{\nu,j} := |\Phi_j h_{\nu,j} (y_j)|^2$
to obtain \eqref{g30a}. In the same way we get
\eqref{g32a} with absolute convergence from \eqref{g22a}.
Finally, \eqref{g31a} with absolute convergence follows
immediately from \eqref{g21}.
\end{proof}

\begin{rem}
Under the assumptions of Lemma \ref{l5a}
every mapping $\Phi_j$ can be extended to a linear injective
mapping $\Phi_j: H_j \to \K^D$ analogously to \eqref{g40}, and
$\Phi_j(H_j)$ is a reproducing kernel Hilbert
space. Moreover, $k_j$
is the reproducing kernel of $\Phi_j(H_j)$, and
$\Phi_j$ is an isometric isomorphism between $H_j$ 
and $H(k_j)$
mapping the unit vector $h_{0,j} \in H_j$ to the function $1\in H(k_j)$.
As noted in Appendix \ref{a1}, this implies that 
the tensor product of the mappings $\Phi_j$
is an isometric 
isomorphism between $H$ and 
$\bigotimes_{j \in \N} H(k_j)$ with unit vectors $\Phi_j h_{0,j}
:= 1$. In particular, $H(K)$ and 
$\bigotimes_{j \in \N} H(k_j)$ 
are canonically isometrically isomorphic.
\end{rem}

In the construction of a mapping $\Phi : H \to \K^E$
we start with injective mappings
$\Phi_j : \{h_{\nu,j}: \nu \in N_j\} \to \K^D$, and we assume
that \eqref{g34} and \eqref{g30a} are satisfied. Due to Lemma
\ref{l1}, the right-hand side in \eqref{g31a} may be used to 
define a linear mapping $\Phi : H \to \K^E$,
satisfying
\eqref{g35}, and Lemma \ref{l3b}
immediately
carries over to the present setting.

\begin{lemma}\label{l3c}
Suppose that \eqref{g34} and \eqref{g30a} are
satisfied and that $\Phi$, defined via \eqref{g31a}, is injective. 
Then $(\Phi(H), \scp{\cdot}{\cdot}_{\Phi})$ is a
reproducing kernel Hilbert space.
\end{lemma}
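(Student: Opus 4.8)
The plan is to apply Lemma~\ref{l3b} essentially verbatim, taking the abstract Hilbert space there to be the tensor product $H$ equipped with its orthonormal basis $(h_\bnu)_{\bnu \in \NN}$ indexed by the countable set $\NN$, and taking the map there to be our $\Phi$. First I would observe that the defining formula \eqref{g31a} is exactly the instance of \eqref{g40} obtained by setting the index set equal to $\NN$ and $\Phi h_\bnu := \prod_{j \in \N} \Phi_j h_{\nu_j,j}$ as an element of $\K^E$: evaluating \eqref{g31a} at $f = h_\bnu$ and using orthonormality recovers \eqref{g35}, so the two formulations of $\Phi$ agree. Injectivity of $\Phi$ is assumed, the scalar product $\scp{\cdot}{\cdot}_\Phi$ and the Hilbert space structure of $\Phi(H)$ come for free from the general setup of Section~\ref{s2.0}, and the convergence of the defining series is already granted by the discussion preceding the lemma. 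Hence the only hypothesis of Lemma~\ref{l3b} left to verify is the pointwise square-summability \eqref{g3a} of the basis images,
\[
\forall\, \by \in E: \quad \sum_{\bnu \in \NN} |\Phi h_\bnu(\by)|^2 < \infty.
\]

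To check this I would fix $\by \in E$ and rewrite each summand via \eqref{g35} as $|\Phi h_\bnu(\by)|^2 = \prod_{j \in \N} \beta_{\nu_j,j}$, where $\beta_{\nu,j} := |\Phi_j h_{\nu,j}(y_j)|^2$. Condition \eqref{g34} gives the normalization $\beta_{0,j} = |\Phi_j h_{0,j}(y_j)|^2 = 1$ for every $j \in \N$, so each product is effectively finite (all but finitely many factors equal $1$) and has precisely the form required by the tensor-product summability result. Invoking Lemma~\ref{l1} with this choice of $\beta_{\nu,j}$ then relates $\sum_{\bnu \in \NN} \prod_{j} \beta_{\nu_j,j}$ to the double sum $\sum_{j \in \N} \sum_{\nu \in N_j \setminus \{0\}} \beta_{\nu,j}$; this is the same equivalence used in the proof of Lemma~\ref{l5a}, now read in the reverse direction. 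Since the double sum is finite by assumption \eqref{g30a}, the product sum converges, which is exactly \eqref{g3a} for the basis $(h_\bnu)_{\bnu \in \NN}$. Lemma~\ref{l3b} then yields that $(\Phi(H), \scp{\cdot}{\cdot}_\Phi)$ is a reproducing kernel Hilbert space.

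The proof is therefore almost entirely bookkeeping: the genuine analytic content has been isolated earlier, namely the passage from square-summability of basis images to continuity of point evaluations (Lemma~\ref{l3b}) and the tensor-product summability identity (Lemma~\ref{l1}). The only point that warrants care—and the closest thing to an obstacle—is the clean translation between the multi-index formulation \eqref{g31a}/\eqref{g35} and the single-basis formulation \eqref{g40}/\eqref{g3a} of Lemma~\ref{l3b}, together with verifying that the hypotheses of Lemma~\ref{l1} are met. In particular one must use the normalization $\beta_{0,j} = 1$, supplied by \eqref{g34}, so that \eqref{g30a} can be read off directly as the required convergence condition; without it the reduction of the sum over $\NN$ to a double sum would fail.
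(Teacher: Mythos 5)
Your proposal is correct and follows essentially the same route as the paper: the paper dispenses with an explicit proof because the paragraph preceding Lemma~\ref{l3c} already records that, by Lemma~\ref{l1}, the series in \eqref{g31a} defines $\Phi$ and that Lemma~\ref{l3b} ``immediately carries over,'' which is precisely your reduction. Your verification of \eqref{g3a} for the basis $(h_\bnu)_{\bnu \in \NN}$ via Lemma~\ref{l1} with $\beta_{\nu,j} := |\Phi_j h_{\nu,j}(y_j)|^2$ (using \eqref{g34} for $\beta_{0,j}=1$ and \eqref{g30a} for the double sum) is exactly the same bookkeeping the paper uses, in the reverse direction, in the proof of Lemma~\ref{l5a}.
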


Next, we adapt Remark \ref{r50}, which deals with $L_2$-spaces,
to the tensor product setting.

\begin{rem}\label{r50a}
Consider a probability measure $\mu_0$ on any 
$\sigma$-algebra on $D$ and the corresponding space
$L_2(D,\mu_0)$.  Then the 
tensor product space $\bigotimes_{j \in \N} L_2(D,\mu_0)$ 
is canonically isometrically isomorphic 
to the space $L_2(E,\mu)$ with respect to the
probability measure $\mu = \mu_0 \times \mu_0 \times \dots$
on the product $\sigma$-algebra on $E$.
Assume that for every $j \in \N$ the space $H_j$ is a subspace of
$L_2(D,\mu_0)$ with a continuous embedding of norm one
and $h_{0,j}=1$. Consequently, $H$ is a subspace of
$\bigotimes_{j \in \N} L_2(D,\mu_0)$ with a continuous embedding of
norm one.

Consider sequences of square-integrable 
functions $\mathfrak{h}_{\nu,j}$ on $D$ with the following properties:
For every $j \in \N$ we have $\mathfrak{h}_{0,j}=1$,  
the corresponding equivalence classes $h_{\nu,j} \in L_2(D,\mu_0)$ 
with $\nu \in N_j$ form an orthonormal basis of $H_j$, and
\[
\forall\,\by \in E: \quad
\sum_{j \in \N} \sum_{\nu \in N_j} |\mathfrak{h}_{\nu,j} (y_j)|^2 
< \infty.
\]
According to Remark \ref{r50} and Lemma \ref{l1} the linear mapping 
$\Phi$ given by 
\[
\Phi f (\by) := 
\sum_{\bnu \in \NN} \scp{f}{h_{\bnu}} \cdot 
\prod_{j \in \N} \mathfrak{h}_{\nu_j,j} (y_j)
\]
for $f \in H$ and $\by \in E$ is injective. 
We apply Lemma \ref{l3c} to conclude that 
$(\Phi(H), \scp{\cdot}{\cdot}_{\Phi})$ is a 
reproducing kernel Hilbert space. 
\end{rem}

\section{Increasing Smoothness and Weights}\label{s3a}

\subsection{The Function Spaces: Abstract Setting}\label{s3.1}

The abstract setting is given by
a separable Hilbert space $(H_0,\scp{\cdot}{\cdot}_0)$ with 
an orthonormal basis $(e_\nu)_{\nu \in \N_0}$ and a family 
$(\alpha_{\nu,j})_{\nu,j \in \N}$ of Fourier weights such that
\begin{equation}\label{g6b}\tag{C1}
\forall\, \nu,j \in \N: \quad
\alpha_{\nu,j} \geq \max\left(\alpha_{\nu,1},\alpha_{1,j}\right)
\end{equation}
and
\begin{equation}\label{g6a}\tag{C2}
\alpha_{1,1} > 1.
\end{equation}

We define
\[
H_j := \{ f \in H_0 \colon 
\sum_{\nu \in \N} \alpha_{\nu,j} \cdot |\scp{f}{e_\nu}_0|^2 
< \infty
\}
\]
and
\[
\scp{f}{g}_j := 
\scp{f}{e_0}_0 \cdot \scp{e_0}{g}_0 +
\sum_{\nu \in \N} \alpha_{\nu,j} \cdot 
\scp{f}{e_\nu}_0 \cdot \scp{e_\nu}{g}_0
\]
for $j \in \N$ and $f,g \in H_j$
to obtain a sequence of Hilbert spaces 
$(H_j,\scp{\cdot}{\cdot}_j)$.
For notational convenience we put $\alpha_{\nu,j} := 1$ for 
$j=0$ and $\nu \in \N_0$ as well as for $j \in \N$ and $\nu=0$.
Clearly
\[
\scp{f}{e_\nu}_j = \alpha_{\nu,j} \cdot \scp{f}{e_\nu}_0
\]
for $\nu,j \in \N_0$ and $f \in H_j$.

We state some basic properties of the spaces $H_j$.
Let $i,j \in \N_0$.
We have a continuous embedding $H_{i} \hookleftarrow H_j$ 
if and only if
\[
\sup_{\nu \in \N} \frac{\alpha_{\nu,i}}{\alpha_{\nu,j}} <
\infty,
\]
and in the case of a continuous embedding its norm
is given by 
\[
\sup_{\nu \in \N_0}
\sqrt{\frac{\alpha_{\nu,i}}{\alpha_{\nu,j}}} \geq 1.
\]
In particular, \eqref{g6b} and \eqref{g6a} imply
$1 \leq \alpha_{\nu,1} \leq \alpha_{\nu,j}$ for $\nu,j \in \N$,
and the latter 
is equivalent to $H_0 \hookleftarrow H_1 \hookleftarrow H_j$ 
with continuous embeddings of norm one
for every $j\geq 1$.
Furthermore, we have a compact embedding $H_{i} \hookleftarrow H_j$
if and only if 
\[
\lim_{\nu \to \infty} \frac{\alpha_{\nu,i}}{\alpha_{\nu,j}} = 0.
\]
Throughout this paper, increasing smoothness is understood 
in this sense, i.e., $H_i \supset H_j$ for $i < j$ with a 
compact embedding.

Let $j \in \N$ and $f \in H_j$. 
The elements $\alpha_{\nu,j}^{-1/2} e_\nu$ with $\nu \in \N_0$ form an
orthonormal basis of the Hilbert space $H_j$.
Let $S_j$ denote the embedding of $H_j$ into $H_0$.  
Since 
$\scp{e_\nu}{e_\mu}_j = \alpha_{\nu,j} \cdot \scp{S_j^* e_\nu}{e_\mu}_j$
for $\nu,\mu \in \N_0$, we obtain
\begin{equation}\label{g400}
S_j^* S_j (f) = \sum_{\nu \in \N_0} \alpha_{\nu,j}^{-1} \cdot
\scp{f}{e_\nu}_0 \cdot e_\nu.
\end{equation}
Consequently, the singular values of $S_j$ are given by
$\alpha_{\nu,j}^{-1/2}$ with $\nu \in \N_0$.

In the abstract setting we consider the tensor product space
\[
H := \bigotimes_{j \in \N} H_j,
\]
based on the choice of the unit vector $e_0$.

\subsection{The Function Spaces: Standard Setting}\label{s3.1a}

Most often, we consider
the following special case of the abstract setting.
This standard setting is given by
\[
H_0 := L_2(D,\mu_0)
\]
for some probability measure $\mu_0$ on a $\sigma$-algebra on any set
$D \neq \emptyset$, by a linear and injective mapping
\[
\Phi_1 : H_1 \to \K^D
\]
that satisfies
\[
\forall \, h \in H_1: \quad \Phi_1(h) \in h
\]
and
\[
\Phi_1 (e_0) = 1,
\]
and by
\[
\Phi_j = \Phi_1|_{H_j}
\]
for $j \geq 2$.

Consequently, the condition \eqref{g3a} reads
\begin{equation}\label{g36}
\forall\,y \in D: \quad
\sum_{\nu\in \N} \alpha_{\nu,j}^{-1} \cdot |\Phi_1 e_\nu (y)|^2 < \infty
\end{equation}
for the space $H_j$, and due to \eqref{g6b}
this condition is most restrictive in the case $j=1$.
Analogously, \eqref{g30a} reads
\begin{equation}\label{g37}
\forall\,\by \in D^\N: \quad
\sum_{\nu, j \in \N} 
\alpha_{\nu,j}^{-1} \cdot |\Phi_1 e_\nu (y_j)|^2 
< \infty
\end{equation}
for the space $H$.
Here it is crucial that the tensor product is based on the choice of 
the unit vectors $e_0$.
Henceforth we typically will not stress this point anymore.
In the standard setting the conditions \eqref{g36} and \eqref{g37}
are necessary and sufficient for $\Phi_1(H_j)$ and
$\Phi (H)$, respectively, to be reproducing kernel Hilbert
spaces, see Remarks \ref{r50} and \ref{r50a}.

Subsequently we identify
$\Phi_1 f$ and $f$ for $f \in H_1$,
$\Phi f$ and $f$ for $f \in H$,
$\Phi_1 (H_j)$ and $H_j$, and
$\Phi (H)$ and $H$, if the respective spaces are reproducing kernel
Hilbert spaces. Furthermore, we do no longer 
distinguish between square-integrable functions on $D$ and elements of 
$H_0$. In this sense, we take $\Phi_1 e_\nu := e_\nu$,
so that, in particular,
\[
e_0:=1. 
\]

In the standard setting 
the space $\bigotimes_{j \in \N} H_0$ is
canonically isometrically isomorphic
to the space $L_2(E,\mu)$, where
$\mu$ denotes the product of the probability measure $\mu_0$ on the 
product $\sigma$-algebra on $E := D^\N$. Obviously, $H$ is a
subspace of $L_2(E,\mu)$ with a continuous embedding of norm one.

\subsection{Examples}\label{s3.2}

In all the examples to be presented below, we consider the standard
setting with a Borel probability measure $\mu_0$ on
an interval $D \subseteq \R$.
We separate the choice of the Hilbert space $H_0$ and its orthonormal 
basis $(e_\nu)_{\nu \in \N_0}$ from the selection of the Fourier
weights $(\alpha_{\nu,j})_{\nu,j \in \N}$. 

See, e.g., \cite{IKPW16a} and the references
therein, for the following example in the context of 
tractability analysis of high-dimensional problems.
For further information about Korobov spaces see, e.g., 
\cite[App.~A.1]{NovWoz08}, 
and about Walsh functions see, e.g., \cite[App.~A]{DP10}. 

\begin{exmp}\label{ex11}
Consider the uniform 
distribution $\mu_0$ on $D:=[0,1]$ together with the 
trigonometric basis $(e_\nu)_{\nu \in \N_0}$, given by 
$e_\nu(x):=\exp(2\pi i (-1)^\nu \lceil \nu/2 \rceil x)$,
or with the Walsh basis $(e_\nu)_{\nu \in \N_0}$, see \cite{Wal23}. 
Since 
$|e_\nu(x)|=1$ for all $\nu \in \N_0$ and $x \in D$, we conclude that
\begin{equation}\label{g9}
\sum_{\nu \in \N} \alpha_{\nu,1}^{-1} < \infty
\end{equation}
is equivalent to
$H_1,H_2,\dots$ being reproducing kernel Hilbert spaces.
Furthermore, $H$ is a 
reproducing kernel Hilbert space if
and only if
\[
\sum_{\nu,j \in \N} \alpha_{\nu,j}^{-1} < \infty.
\]
If the spaces $H_j$ stem from the trigonometric basis, then they are 
known as Korobov spaces. If they stem from the Walsh basis, then they 
are often called Walsh spaces. 
\end{exmp}

For the next example see, for instance, 
\cite{HHY04} 
and the references therein. 

\begin{exmp}\label{eha}
Consider the uniform distribution $\mu_0$ on $D:=[0,1]$ together with the 
$L_2$-normalized Haar basis $(e_\nu)_{\nu \in \N_0}$. 
Put $I_\ell := \{2^\ell,\dots,2^{\ell+1}-1\}$ for $\ell \in \N_0$,
and assume that
\begin{align}\label{condpower}
\alpha_{2^\ell,j} = \dots = \alpha_{2^{\ell+1}-1,j}
\end{align}
for $\ell \in \N_0$ and $j \in \N$.
Since
\[
\sum_{\nu \in I_\ell} |e_\nu (x)|^2 = 2^{\ell} 
\]
for all $x \in D$ and $\ell \in \N_0$, the conclusions from
Example \ref{ex11} are also valid in the present case.
Since the Haar functions $e_\nu$ as well as the Walsh functions 
$e_\nu$ from Example \ref{ex11} with $\nu\in I_\ell$ are an orthonormal 
basis of the same finite-dimensional subspace of $L_2([0,1], \mu_0)$, 
condition  \eqref{condpower} ensures that in both cases we obtain 
the same sequence of Hilbert spaces $H_j$.
\end{exmp}

\begin{exmp}\label{ex13}
Consider the uniform 
distribution $\mu_0$ on $D:=[-1,1]$ together with the $L_2$-normalized
Legendre polynomials $e_\nu$. Here we have
\[
e_\nu(1) = \|e_\nu\|_\infty := 
\sup_{x \in D} |e_\nu(x)| = \sqrt{2\nu+1} \asymp \max(\nu^{1/2},1),
\]
see, e.g., \cite[Ex.~2.20]{MR0249885}.
It follows that
$H_1,H_2,\dots$ are reproducing kernel Hilbert spaces
if and only if
\[
\sum_{\nu \in \N} \alpha_{\nu,1}^{-1} \cdot \nu < \infty,
\]
while $H$ is a reproducing kernel Hilbert space 
if and only if
\[
\sum_{\nu,j \in \N} \alpha_{\nu,j}^{-1} \cdot \nu < \infty.
\]
\end{exmp}

\begin{exmp}\label{s3:e4}
Now we consider a generalization of Example \ref{ex13}. 
Let $\mu_0$ be defined by the Lebesgue density 
$x \mapsto c^{(\alpha,\beta)}\cdot 
(1-x)^\alpha (1+x)^\beta$ on $D := [-1,1]$
for some $\alpha,\beta > -1/2$,
where
\begin{align*}
c^{(\alpha,\beta)}
:=\frac{\alpha+\beta+1}{2^{\alpha+\beta+1}}\cdot 
\binom{\alpha+\beta}{\alpha}.
\end{align*}
The orthogonal polynomials associated 
to this weight function are the Jacobi polynomials 
$P_\nu^{(\alpha,\beta)}$, usually normalized such that 
$P_\nu^{(\alpha,\beta)}(1)= \binom{\nu+\alpha}{\nu}$, 
see, e.g., \cite[Eqn.~(4.1.1)]{S75}.
The special case $\alpha=\beta=0$ yields the Legendre polynomials.
The $L_2$-normalized version is
\[ 
 e_\nu := c_\nu^{(\alpha,\beta)} \cdot P_\nu^{(\alpha,\beta)} 
\]
with
\begin{align*}
c_\nu^{(\alpha,\beta)} &:=
(c^{(\alpha,\beta)})^{-1/2}\cdot
\left( \frac{(2\nu+\alpha+\beta+1) \cdot \Gamma(\nu+1)  
\cdot \Gamma(\nu+\alpha+\beta+1)}{2^{\alpha+\beta+1} \cdot 
\Gamma(\nu+\alpha+1) \cdot \Gamma(\nu+\beta+1)} \right)^{1/2}\\
&\asymp \max(\nu^{1/2},1),
\end{align*}
see, e.g., \cite[Eqn.~(4.3.3)]{S75}.
The Jacobi polynomials 
$P_\nu^{(\alpha,\beta)}$ attain their supremum norm in 
$-1$ or in $+1$ with 
\[
 \| P_\nu^{(\alpha,\beta)}
 \|_\infty= \binom{\nu+\max (\alpha,\beta)}{\nu} 
 \asymp \max(\nu^{\max (\alpha,\beta)},1),
\]
see, e.g., \cite[Thm.~7.32.1]{S75}. 
Altogether we obtain 
\[
\max(|e_\nu(1)|,|e_\nu(-1)|)
=\sup_{x \in D} |e_\nu(x)| \asymp \max(\nu^{\sigma},1)
\]
with $$ \sigma:=\max(\alpha,\beta)+\frac12 > 0.$$
It follows that
$H_1,H_2,\dots$ are reproducing kernel Hilbert spaces
if and only if
\[
\sum_{\nu \in \N} \alpha_{\nu,1}^{-1} \cdot \nu^{2 \sigma} < \infty,
\]
while $H$ is a reproducing kernel Hilbert space 
if and only if
\[
\sum_{\nu,j \in \N} \alpha_{\nu,j}^{-1} \cdot \nu^{2 \sigma} < \infty.
\]
\end{exmp}

\begin{rem}
In the Examples \ref{ex11}--\ref{s3:e4} the summability of 
$(\alpha_{\nu,j}^{-1} \cdot \nu^\sigma)_{\nu,j \in \N}$ for some
$\sigma \geq 0$
determines whether $H$ is a reproducing kernel Hilbert space. 
According to Lemma \ref{l4} this summability already follows from 
the summability of 
$(\alpha^{-1}_{\nu,1} \cdot \nu^\sigma)_{\nu \in\N}$ and 
$(\alpha_{1,j}^{-1})_{j\in\N}$, if
\begin{equation}\label{g5}
\liminf_{\nu,j \to \infty} 
\frac{\ln(\alpha_{\nu,j})}{\ln(\nu) \cdot \ln (j)} > 0.
\end{equation}
\end{rem}

Next, we turn to two important classes of Fourier
weights. At first we introduce some notation.
The decay of any sequence $x=(x_i)_{i \in \N}$ of positive reals 
is defined by
\[
\decay (x) := 
\sup \Bigl\{ \tau > 0 : \sum_{i \in \N} x_i^{1/\tau} < \infty \Bigr\}
\]
with the convention that $\sup \emptyset := 0$,
see
\cite[p.~311]{Was12}.
As a well-known fact
\[
\decay (x) = 
\liminf_{i \to \infty} \frac{\ln(x_i^{-1})}{\ln(i)}
\]
if the decay or the limes inferior is positive,
which follows, e.g., from Lemma~\ref{l5} in Appendix B.

\begin{exmp}\label{ex1}
We consider
\[
\alpha_{\nu,j} := a_\nu^{r_j},
\]
where
\begin{equation}\label{g15}
\forall\, j \in \N:\quad 0 < r_1 \leq r_j
\end{equation}
\[
\]
as well as
\[
\forall\, \nu \in \N: \quad 1 < a_1 \leq a_\nu
\]
and
\begin{equation}\label{g60}
a_\nu \asymp \nu.
\end{equation}

Put $r_0:=0$. For $j \in \N_0$
the space $H_{j+1}$ is continuously embedded into $H_j$ 
(with norm one) if and only if $r_j \leq r_{j+1}$,
and in this case 
$r_j < r_{j+1}$ is equivalent to the compactness of
this embedding.

Obviously \eqref{g6b} and \eqref{g6a} hold true, and \eqref{g5}
is equivalent to
\[
\rho > 0
\]
for
\[
\rho := \liminf_{j \to \infty} \frac{r_j}{\ln(j)}.
\]

Note that
\[
\decay ((\alpha^{-1}_{\nu,1})_{\nu \in \N}) =
r_1
\]
and
\[
\decay ((\alpha^{-1}_{1,j})_{j \in \N}) = \rho \cdot \ln (a_1).
\]
Let $\sigma \geq 0$.
Observe that
\begin{equation}\label{g200}
\sum_{\nu \in \N} \alpha_{\nu,1}^{-1} \cdot \nu^\sigma < \infty
\end{equation}
is actually equivalent to
\[
r_1 > \sigma + 1,
\]
while
\[
r_1 > \sigma + 1
 \ \wedge \ \rho > \frac{1}{\ln(a_1)}
\]
is a sufficient condition for 
\begin{equation}\label{g201}
\sum_{\nu,j \in \N} \alpha_{\nu,j}^{-1} \cdot \nu^\sigma < \infty
\end{equation}
to hold. A necessary condition also permits $\rho = 1/\ln(a_1)$.
\end{exmp}

\begin{rem}
The exponents $r_j$ in Example~\ref{ex1} may be regarded as 
smoothness parameters.
To illustrate this point, we first consider the complex $L_2$-space and the 
complex exponentials according to Example \ref{ex11}.
Up to equivalence of norms, the Korobov spaces $H_j$ with
parameters $r_j$
may be defined by any choice of $a_\nu>0$ such that
\eqref{g60} is satisfied. 
Specifically
\begin{equation}\label{g1}
a_\nu := 2\pi \lfloor (\nu+1) / 2 \rfloor
\end{equation}
is considered in, e.g., \cite{PapWoz10,Sie14} and
\begin{equation}\label{g2}
a_\nu := 1 + \lfloor (\nu+1)/2 \rfloor
\end{equation}
is considered in, e.g., \cite{DunGri16}. 
Observe that the index set $\Z$ instead of $\N_0$
is considered in \cite{DunGri16,PapWoz10,Sie14}. Furthermore,
the parameters $2r_j$ instead of $r_j$ are used in
\cite{PapWoz10,Sie14}.
See \cite{KSU17} for a generalization of this type of Fourier
weights, which involves an additional fine parameter.

Secondly, we consider the smoothness spaces 
based on Legendre polynomials and, more general, on 
Jacobi polynomials in Examples \ref{ex13} and \ref{s3:e4}, 
which are related to weighted Sobolev spaces. Such spaces 
were considered in, e.g., \cite{Nic00}. We discuss one special case 
where the relation can be directly explained. Corollary 2.6 and 
Theorem 2.7 from \cite{Nic00} show that, if $r_j$ is an even integer 
and $\alpha = \beta >-1/2$, then the space $H_j$ with respect to 
the Jacobi polynomials $P_\nu^{(\alpha,\alpha)}$ can be identified 
(with equivalent norms) with the Sobolev space of all functions on 
$(-1,1)$ with weak derivatives up to order $r_j/2$ in the weighted 
$L_2$-space of functions on $(-1,1)$ with respect to the weight 
function $\varrho_{\alpha,r_j}(x) = (1-x^2)^{\alpha+r_j/2}$.

More formally, let $L_2(\varrho_{\alpha,r_j})$ be the Hilbert space 
of all functions $f:(-1,1) \to \R$ with 
\[
  \|f\|_{L_2(\varrho_{\alpha,r_j})}^2
  =
 \int_{-1}^1 |f(x)|^2 \varrho_{\alpha,r_j}(x) \, d x < \infty.
\]
Let $W^{r_j/2} (\varrho_{\alpha,r_j})$ be the Hilbert space
of all functions $f$ on $(-1,1)$ with weak derivatives up to order 
$r_j/2$ in $L_2(\varrho_{\alpha,r_j})$ with norm
given by
\[
 \left( \sum_{k=0}^{r_j/2} 
\|f^{(k)}\|_{L_2(\varrho_{\alpha,r_j})}^2 \right)^{1/2}.
\]
Then 
\[ 
 H_j = W^{r_j/2} (\varrho_{\alpha,r_j})
\] 
with equivalent norms.
\end{rem}

\begin{exmp}\label{ex2a}
Choose $a > 1$ and consider
\[
\alpha_{\nu,j} := a^{r_j \cdot \nu^{b_j}}
\]
with \eqref{g15} being satisfied and with
\[
\forall\, j \in \N: \quad
0 < b_1 \leq b_j.
\]
See, e.g., \cite{IKPW16a} and the references therein for this
type of Fourier weights.

Put $r_0:=0$ as previously. For $j \in \N_0$
we have a compact embedding of $H_{j+1}$ into $H_j$
if and only if $b_j < b_{j+1}$ or $b_j = b_{j+1}$ and $r_j <
r_{j+1}$. Furthermore, we have a continuous, non-compact embedding
only in the trivial case $b_j=b_{j+1}$ and $r_j=r_{j+1}$.

Obviously \eqref{g6b} and \eqref{g6a} hold true, and \eqref{g5}
follows from
\[
\rho > 0,
\]
where $\rho$ is defined as in Example \ref{ex1}.
In contrast to Example \ref{ex1},
we now have (sub-)exponentially growing Fourier weights for every space
$H_j$ with $j \in \N$. In particular,
\[
\decay ((\alpha^{-1}_{\nu,1})_{\nu \in \N}) = \infty,
\]
while
\[
\decay ((\alpha^{-1}_{1,j})_{j \in \N}) = \rho \cdot \ln (a).
\]
Hence \eqref{g200} is satisfied for every $\sigma \geq 0$.
A sufficient condition for \eqref{g201} to hold
is
\[
\rho > \frac{1}{\ln(a)}.
\]
Again a necessary condition also permits equality.
\end{exmp}

\subsection{The Embeddings: Abstract Setting}\label{secembed}

Consider the abstract setting.
Let
\[
\gamma_j := \sup_{\nu \in \N} \frac{\alpha_{\nu,1}}{\alpha_{\nu,j}}
\]
for $j \in \N$, and observe that $0< \gamma_j \leq 1$ due to 
\eqref{g6b} and \eqref{g6a}.

For the first kind of embedding 
we use the sequence $(\alpha_{\nu,1})_{\nu \in \N}$ of Fourier
weights of the space $(H_1,\scp{\cdot}{\cdot}_1)$ 
and the sequence $(\gamma_j)_{j \in \N}$ of positive weights 
to construct a new sequence of Hilbert spaces
$(G_j,\scp{\cdot}{\cdot}_{G_j})$ in the following
way. We take
\[
G_j := H_1
\]
and
\[
\scp{f}{g}_{G_j} := 
\scp{f}{e_0}_0 \cdot \scp{e_0}{g}_0 + 
\frac{1}{\gamma_j} \cdot
\sum_{\nu \in \N} 
\alpha_{\nu,1} \cdot \scp{f}{e_\nu}_0 \cdot \scp{e_\nu}{g}_0
\]
for $j \in \N$ and $f,g \in H_1$.
Of course, this is a particular case of the construction
of the spaces $(H_j,\scp{\cdot}{\cdot}_j)$, where the Fourier
weights are now of the form $\alpha_{\nu,j} :=
\alpha_{\nu,1}/\gamma_j$.
In addition to $H$ we consider the tensor product 
space
\[
G := \bigotimes_{j \in \N} G_j
\]
with the corresponding scalar product.

\begin{rem}
The embeddings $G_j \hookleftarrow G_{j+1}$ and 
$G_j \hookrightarrow G_{j+1}$ are 
continuous with norms 
$\max(1,\sqrt{\gamma_{j+1}/\gamma_j})$ and
$\max(1,\sqrt{\gamma_j/\gamma_{j+1}})$, respectively. 
In particular, we have equivalence of the norms
on all spaces
$(G_j,\scp{\cdot}{\cdot}_{G_j})$, which is
in sharp contrast to spaces of increasing smoothness,
where we have compact embeddings 
$H_j \hookleftarrow H_{j+1}$.
\end{rem}

For the second kind of embedding we take 
\[
F_j := \spann \{e_0,e_1\}
\]
as well as
\[
\scp{f}{g}_{F_j} :=
\scp{f}{e_0}_0 \cdot \scp{e_0}{g}_0 + 
\alpha_{1,j} \cdot
\scp{f}{e_1}_0 \cdot \scp{e_1}{g}_0
\]
for $j \in \N$ and $f,g \in F_1$,
and we consider the tensor product space
\[
F := \bigotimes_{j \in\N} F_j
\]
with the corresponding scalar product.

Our analysis is based on the following simple observation.

\begin{theo}\label{t0}
In the abstract setting we have
\[
F \hookrightarrow H \hookrightarrow G
\]
with embeddings of norm one.
\end{theo}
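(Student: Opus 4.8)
The plan is to reduce both embeddings to a factorwise comparison of Fourier weights and then tensorize, exploiting that all three tensor products $F$, $H$, $G$ are built on the \emph{same} reference unit vector $e_0$ in each factor. Concretely, for each $j$ the inclusions $F_j \subseteq H_j$ and $H_j \subseteq H_1 = G_j$ are linear, send $e_0$ to $e_0$, and I will check that they are norm-nonincreasing. By the properties of the (incomplete) tensor product recalled in Appendix~\ref{a1}, maps of this type---contractions fixing the reference vector---induce well-defined linear maps $F \to H$ and $H \to G$ that act on the elementary-tensor orthonormal bases factorwise. It therefore suffices to bound these induced maps on the orthonormal bases, which is where the whole computation takes place.

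Recall that $(\alpha_{\nu,j}^{-1/2} e_\nu)_{\nu \in \N_0}$ is an orthonormal basis of $H_j$, so the elementary tensors $h_\bnu := \bigotimes_{j} \alpha_{\nu_j,j}^{-1/2} e_{\nu_j}$, indexed by $\bnu \in \NN$, form an orthonormal basis of $H$; the spaces $F$ and $G$ carry the analogous bases. The key point is that the images of these basis vectors remain \emph{orthogonal} in the target space: for $\mu \neq \nu$ one has $\scp{e_\mu}{e_\nu}_{G_j} = 0$, since all cross terms in the definition of $\scp{\cdot}{\cdot}_{G_j}$ vanish, and likewise in $F_j$ and $H_j$, so distinct elementary tensors stay orthogonal after embedding. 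Hence each embedding sends an orthonormal basis to an orthogonal system, and its squared operator norm equals the supremum over $\bnu \in \NN$ of the squared target norms $\|h_\bnu\|^2$ of the images.

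These squared norms factor over $j$, so everything reduces to the single-factor estimates. For $H \hookrightarrow G$ and $\nu \geq 1$ I compute $\|\alpha_{\nu,j}^{-1/2} e_\nu\|_{G_j}^2 = \alpha_{\nu,j}^{-1}\cdot \alpha_{\nu,1}/\gamma_j = \gamma_j^{-1}\cdot(\alpha_{\nu,1}/\alpha_{\nu,j}) \leq 1$, precisely because $\gamma_j = \sup_{\nu} \alpha_{\nu,1}/\alpha_{\nu,j}$; for $\nu = 0$ the norm is $1$. For $F \hookrightarrow H$ I get $\|\alpha_{1,j}^{-1/2} e_1\|_{H_j}^2 = \alpha_{1,j}^{-1}\cdot\alpha_{1,j} = 1$, so this factor map is actually isometric. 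Multiplying over the finitely many indices $j$ with $\nu_j \neq 0$ yields $\|h_\bnu\|_{G} \leq \|h_\bnu\|_{H}$ for every $\bnu$, with equality in the case $F \hookrightarrow H$, whence both embeddings are norm-nonincreasing. Finally the reference tensor $\bigotimes_j e_0$ has norm one in each of $F$, $H$, $G$ and is fixed by the embeddings, so the operator norms are exactly one.

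The only genuinely delicate point is the well-definedness and boundedness of the tensor-product maps on the incomplete tensor product, i.e.\ that factorwise contractions fixing $e_0$ do assemble into bounded maps between the von Neumann products $F$, $H$, $G$. This is exactly what the common choice of reference vector $e_0$ guarantees and what Appendix~\ref{a1} supplies; once it is in hand, the orthogonality of the basis images turns the norm estimate into the elementary factorwise weight comparison above, so no further analytic work is needed.
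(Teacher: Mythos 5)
Your proof is correct and follows essentially the same route as the paper: the paper's one-line proof simply notes that the factor embeddings $F_j \hookrightarrow H_j$ and $H_j \hookrightarrow G_j$ have norm one and then invokes the tensorization fact from Appendix~\ref{a1} (factorwise contractions fixing the unit vectors $e_0$ induce a map of norm equal to the product of the factor norms). Your orthogonal-basis computation just makes explicit the same factorwise weight comparison $\gamma_j^{-1}\alpha_{\nu,1}/\alpha_{\nu,j} \leq 1$ and $\alpha_{1,j}^{-1}\alpha_{1,j}=1$ that underlies the paper's statement.
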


\begin{proof}
The norm of the embeddings $H_j \hookrightarrow G_j$
and $F_j \hookrightarrow H_j$ is one. 
\end{proof}

The spaces $G$ and $F$
are so-called weighted tensor product spaces, which have
been intensively studied.  
Weighted tensor product spaces of functions depending on 
finitely many variables were introduced in \cite{SW98} for the 
analysis of tractability of multivariate 
problems; for further results and references see, e.g., 
\cite{DKS13, NovWoz08, NovWoz10, NovWoz12}.
Weighted tensor spaces of functions depending on infinitely many 
variables were first considered in
\cite{HW01}. 
The structure of the spaces is analyzed in \cite{GMR14} and a 
survey of recent results on infinite-dimensional  
integration on such spaces can be found in \cite{GneEtAl16}. 

In the present setting the weighted tensor products are
based on the spaces $(H_1,\scp{\cdot}{\cdot}_1)$ and 
$(\spann \{e_0,e_1\},\scp{\cdot}{\cdot}_1)$ and on the weights
$\gamma_j$ and $\alpha_{1,1}/\alpha_{1,j}$, respectively.
Theorem \ref{t0} allows to transfer
results from weighted tensor product spaces to
tensor products of spaces of increasing
smoothness and vice versa.

The results that will be derived in the subsequent sections
depend on the family $(\alpha_{\nu,j})_{\nu,j \in \N}$
of Fourier weights via the decays of the
sequences $(\alpha_{\nu,1}^{-1})_{\nu \in \N}$,
$(\alpha_{1,j}^{-1})_{j \in \N}$, and
$(\gamma_j)_{j \in \N}$.

\begin{exmp}\label{exmp1}
In the situation of Example \ref{ex1} 
we have
\[
\gamma_j = \sup_{\nu \in \N} a_\nu^{r_1-r_j}  = a_1^{r_1-r_j},
\]
and therefore
\[
\decay\left((\gamma_j)_{j \in \N}\right) = \rho \cdot \ln(a_1) =
\decay\left((\alpha_{1,j}^{-1})_{j \in \N}\right). 
\]
Analogously, in the situation of Example \ref{ex2a}, 
\[
\gamma_j = 
\sup_{\nu \in \N} a^{r_1\cdot \nu ^{b_1} -r_j \cdot \nu^{b_j}}  
= a^{r_1-r_j},
\]
and therefore
\[
\decay\left((\gamma_j)_{j \in \N}\right) = \rho \cdot \ln(a) =
\decay\left((\alpha_{1,j}^{-1})_{j \in \N}\right). 
\]
\end{exmp}

\begin{rem}\label{r70}
The family $(\alpha_{\nu,1}/\gamma_j)_{\nu,j \in \N}$ of
Fourier weights satisfies \eqref{g6b} and \eqref{g6a} as well.
However, if also \eqref{g5} holds true for
$(\alpha_{\nu,j})_{\nu,j\in\N}$, we 
do not necessarily have this property for 
$(\alpha_{\nu,1}/\gamma_j)_{\nu,j \in \N}$.
Nevertheless, it is easy to see that the conclusion of Lemma 
\ref{l4} still holds true for the latter family of Fourier weights.
\end{rem}

\subsection{The Embeddings: Standard Setting}\label{sec35}

Now we turn to the standard setting, and we assume that
$G$ is a reproducing kernel Hilbert space 
(in the sense of the study from Section \ref{s2.2}).
It follows that each of the spaces $H_j$, $G_j$, or $F_j$
is a Hilbert space with a reproducing kernel
of the form $1+m$, where $m$ is a reproducing kernel as well
and $H(1) \cap H(m) = \{0\}$.

Consider any reproducing kernel $m$ on $D \times D$. 
If there exists a point $a \in D$ such that $m(a,a)=0$,
then $m$ is called an anchored kernel with anchor $a$.
The latter is equivalent to $f(a)=0$
for every $f \in H(m)$. Next, consider the reproducing kernel
$1+m$, and suppose that $H(1) \cap H(m) = \{0\}$. Then $m$ is an anchored
kernel with anchor $a$ if and only if 
the orthogonal projection onto the subspace $H(1)$ of constant functions 
in $H(1+m)$ is given by $f \mapsto f(a)$.
An anchored kernel induces an anchored function space decomposition on
$\otimes^d_{j=1} H(1+\gamma_j m)$ with $d\in \N$, see \cite{KSWW10}, and on
$\otimes_{j\in \N} H(1 + \gamma_j m)$, see \cite{GMR14}.
Individual components of this decomposition can be evaluated efficiently 
using function values only, see again \cite{KSWW10}.

We stress that for each of the spaces $H_j$, $G_j$, or $F_j$
the respective kernel $m$ is not necessarily
anchored. 
Actually, all the spaces $H_j$ and $G_j$ that we obtain in the 
Examples \ref{ex11} to \ref{s3:e4} do not have a reproducing kernel 
$1+m$ with an anchored kernel $m$. This is easily verified: Since 
$H(m)$ is the orthogonal complement of $H(1)$ in $H(1+m)$, we have that 
$e_1, e_2 \in H(m)$. If $m(a,a)=0$ for some $a\in D$, then necessarily 
$e_1(a)= 0=e_2(a)$. But in the Examples \ref{ex11} and \ref{eha} we 
have $|e_1(x)|=1=|e_2(x)|$ for all $x\in D$.
In Example \ref{s3:e4} (and thus also in Example \ref{ex13}, which is a 
special case of the former example)
the only zero of $e_1$ is 
$a := (\beta -\alpha)/(\alpha +\beta +2)$, and it is easily checked that
$e_2(a) \neq 0$.
Furthermore, we have that the kernel $m(x,y) = \alpha_{1,j}^{-1}\, 
e_1(x)\overline{e_1(y)}$  corresponding to $F_j$ is not anchored in the 
Examples \ref{ex11} and \ref{eha} and  anchored in 
$a := (\beta -\alpha)/(\alpha +\beta +2)$ in Example \ref{s3:e4} and, 
consequently, in $a:=0$ in Example \ref{ex13}.

We establish, however, relations between the 
spaces $H_j$, $G_j$ and $F_j$ 
and spaces with anchored kernels via suitable embeddings.

To this end,
we fix a point $a \in D$,
and for $j \in \N$ and $c > 0$ we define
\[
\gjc := G_j = H_1
\]
and
\[
\scp{f}{g}_{\gjc} := 
f(a) \cdot \overline{g(a)} + 
\frac{1}{c \gamma_j} \cdot
\sum_{\nu \in \N} 
\alpha_{\nu,1} \cdot \scp{f}{e_\nu}_0 \cdot \scp{e_\nu}{g}_0,
\]
where $f,g \in H_1$.

In the sequel we employ results from \cite{GneEtAl16},
which have been formulated for reproducing kernel Hilbert spaces
of real-valued functions. These results may be extended to
complex-valued functions in a canonical way and are thus applicable 
in the present setting. 

\begin{lemma}\label{lw1}
For all $j \in \N$ and $c>0$ the space
$(\gjc, \scp{\cdot}{\cdot}_{\gjc})$ is a reproducing kernel Hilbert
space of functions with domain $D$, and its norm is equivalent to 
$\|\cdot\|_{G_1}$. 
Moreover, there exists a (uniquely defined) reproducing
kernel $m$ on $D \times D$ such that $1+ c\gamma_j \cdot m$ is the 
reproducing kernel of 
$(\gjc, \scp{\cdot}{\cdot}_{\gjc})$ 
for all $j \in \N$ and $c>0$, and
\begin{equation}\label{gw1}
m(a,a) = 0.
\end{equation}
\end{lemma}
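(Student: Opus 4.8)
The plan is to identify a single anchored kernel $m$ that works simultaneously for all $j$ and all $c>0$, and to read off its defining formula from the scalar product on $\gjc$. The starting point is the observation that, regardless of $j$ and $c$, the scalar product on $\gjc$ splits into the rank-one piece $f(a)\overline{g(a)}$ corresponding to the constant subspace $H(1)$ and the remaining piece carrying the weight $(c\gamma_j)^{-1}$. Since the kernel of $G_j = H_1$ (with its original $G_1$-norm) has the form $1+m_0$ for some fixed kernel $m_0$ with $H(1)\cap H(m_0)=\{0\}$, and changing the scalar product only rescales the non-constant part by a positive factor while replacing the $H(1)$-component by evaluation at $a$, I expect the reproducing kernel of $\gjc$ to take exactly the form $1 + c\gamma_j\cdot m$ for a kernel $m$ that is \emph{independent} of $j$ and $c$. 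The anchoring condition \eqref{gw1} should then be automatic from the fact that the $H(1)$-part of the $\gjc$-inner product is $f(a)\overline{g(a)}$.

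Concretely, I would first apply the standing assumption that $G$ (hence each $G_j$) is a reproducing kernel Hilbert space in the sense of Section~\ref{s2.2}, so that by Lemma~\ref{l5a} and the surrounding discussion the kernel of $G_j$ exists and has the product-compatible form $1+m_0$, where $m_0(x,y)=\gamma_1^{-1}\sum_{\nu\in\N}\alpha_{\nu,1}^{-1}\,e_\nu(x)\overline{e_\nu(y)}$ (using $G_1$, i.e.\ $c=1$, $j=1$). Because $|{\cdot}|$ summability of $(\alpha_{\nu,1}^{-1}|e_\nu(y)|^2)_\nu$ is exactly condition \eqref{g36} for $H_1=G_1$, this series converges pointwise and defines a genuine reproducing kernel. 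I would then \emph{define} the candidate $m$ by subtracting off the constant-function contribution so that the anchoring at $a$ holds, namely by replacing the constant part $1$ of $1+m_0$ with $f(a)\overline{g(a)}$ in the inner product; equivalently, setting
\begin{equation*}
m(x,y) := \sum_{\nu\in\N} \alpha_{\nu,1}^{-1}\,\bigl(e_\nu(x)-e_\nu(a)\bigr)\,\overline{\bigl(e_\nu(y)-e_\nu(a)\bigr)}
\end{equation*}
up to the correct normalization by $\gamma_1$. The point is that this $m$ satisfies $m(a,a)=0$ by inspection, since each summand vanishes at $x=y=a$, and it is positive semidefinite as a limit of positive semidefinite finite sums.

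To verify that $1+c\gamma_j\cdot m$ is indeed the reproducing kernel of $\gjc$, I would check the reproducing property directly: for the constant part, the evaluation functional $f\mapsto f(a)$ is represented by the constant function $1$ precisely because the $H(1)$-component of $\scp{\cdot}{\cdot}_{\gjc}$ is $f(a)\overline{g(a)}$; for the non-constant part, the factor $(c\gamma_j)^{-1}$ in the inner product is inverted to $c\gamma_j$ in the kernel, matching the coefficient in $1+c\gamma_j\cdot m$. The equivalence of the $\gjc$-norm with $\|\cdot\|_{G_1}$ follows because the two inner products differ only through the positive multiplicative constant $(c\gamma_j)^{-1}$ on the non-constant part together with the bounded, boundedly invertible replacement of $\scp{f}{e_0}_0\overline{\scp{g}{e_0}_0}$ by $f(a)\overline{g(a)}$; here one uses that evaluation at $a$ is continuous on $H_1$ (a consequence of $H_1$ being a reproducing kernel Hilbert space) and that $1$ is not annihilated, so the modified inner product stays comparable. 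Finally, uniqueness of $m$ is immediate from the uniqueness of reproducing kernels together with the fact that $m = (c\gamma_j)^{-1}\bigl((1+c\gamma_j m)-1\bigr)$ is forced.

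The main obstacle I anticipate is not the positive-definiteness or the anchoring, which are essentially built into the construction, but rather the careful justification that one and the \emph{same} kernel $m$ serves for all $j$ and all $c$ simultaneously — i.e.\ that the dependence on $j$ and $c$ enters only through the scalar multiplier $c\gamma_j$ and not through the kernel itself. This requires arguing that the splitting of the inner product into a $j,c$-independent non-constant bilinear form (scaled by $(c\gamma_j)^{-1}$) plus the fixed rank-one form $f(a)\overline{g(a)}$ is genuinely $j,c$-independent, which in turn rests on the fact that all the $\gjc$ share the same underlying vector space $H_1$ and the same orthonormal system $(e_\nu)$. Once this structural point is made precise, translating it into the kernel statement via Lemma~\ref{l5a} and the reproducing property is routine.
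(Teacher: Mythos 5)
Your proposal is correct, but it takes a genuinely different route from the paper's proof. The paper argues abstractly and by citation: it introduces the seminorm pairs $\|f\|_{1,\na}=|\scp{f}{e_0}_0|$, $\|f\|_{1,\nb}=|f(a)|$ and $\|f\|_{2,\na}^2=\|f\|_{2,\nb}^2=\sum_{\nu\in\N}\alpha_{\nu,1}|\scp{f}{e_\nu}_0|^2$, checks via \cite{GneEtAl16} (Remarks~2.1 and 2.5) that the hypotheses (A1)--(A3) of that paper hold, concludes from (A3) that $\gjc$ is a reproducing kernel Hilbert space, gets the norm equivalence from the closed graph theorem, and then defines $m$ \emph{abstractly} as the reproducing kernel of the subspace $\{f\in H_1\colon f(a)=0\}$ of $\gjc$ in the case $c\gamma_j=1$, quoting \cite{GneEtAl16} (Lemma~2.1, Remark~2.2) for the statement that $1+c\gamma_j\cdot m$ is then the kernel for all $j$ and $c$. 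You instead build everything by hand: the explicit kernel
\[
m(x,y)=\sum_{\nu\in\N}\alpha_{\nu,1}^{-1}\bigl(e_\nu(x)-e_\nu(a)\bigr)\,\overline{\bigl(e_\nu(y)-e_\nu(a)\bigr)},
\]
convergent by \eqref{g36} and the Cauchy--Schwarz inequality, the direct verification of the reproducing property, and two-sided norm estimates based on the continuity of evaluation at $a$ on $H_1$. Your structural observation---that the $\gjc$-inner product is the fixed rank-one form $f(a)\overline{g(a)}$ plus a $(j,c)$-independent form carrying the factor $(c\gamma_j)^{-1}$, so that $\{f\colon f(a)=0\}$ is orthogonal to the constants and its kernel merely rescales---is exactly what the cited results of \cite{GneEtAl16} encapsulate. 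What your route buys is an explicit formula for $m$; equivalently, it exhibits $1$ together with $(c\gamma_j/\alpha_{\nu,1})^{1/2}\bigl(e_\nu-e_\nu(a)\bigr)$, $\nu\in\N$, as an orthonormal basis of $\gjc$. What the paper's route buys is brevity and uniformity: the same two-seminorm machinery is reused verbatim for $F_j^c$ (Lemmas~\ref{lem1234}, \ref{l200}) and for the infinite tensor products (Lemma~\ref{emb1dimg}).

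Three points to tighten. First, your normalization remark is off: $\gamma_1=\sup_{\nu\in\N}\alpha_{\nu,1}/\alpha_{\nu,1}=1$, so no factor involving $\gamma_1$ enters anywhere (and had $\gamma_1$ not equalled $1$, the kernel of $G_1$ would carry the factor $\gamma_1$, not $\gamma_1^{-1}$ as you wrote). Second, the direct verification of the reproducing property uses the pointwise expansion $f(y)=\scp{f}{e_0}_0+\sum_{\nu\in\N}\scp{f}{e_\nu}_0\,e_\nu(y)$ for $f\in H_1$; this is \eqref{g21}, available from Lemma~\ref{l3a} because $H_1=G_1$ is a reproducing kernel Hilbert space under the standing assumption of Section~\ref{sec35}. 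Third, you never address completeness of $\gjc$: your norm estimates are proved without assuming it, so completeness is inherited from the complete space $G_1$, but this step should be said explicitly before calling $\gjc$ a Hilbert space.
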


\begin{proof}
Put
\[
\|f\|_{1,\na} := |\scp{f}{e_0}_0|
\]
and
\[
\|f\|_{1,\nb} := |f(a)|
\]
as well as
\[
\|f\|_{2,\na}^2 := \|f\|_{2,\nb}^2 := 
\sum_{\nu \in \N} 
\alpha_{\nu,1} \cdot |\scp{f}{e_\nu}_0|^2
\]
for $f \in H_1$.
According to \cite[Rem.~2.1]{GneEtAl16},
the vector space $H_1$ together with the seminorms
$\|\cdot\|_{1,\na}$ and $\|\cdot\|_{2,\na}$ satisfies
the conditions \cite[(A1)--(A3)]{GneEtAl16}. 
The same holds true for the seminorms
$\|\cdot\|_{1,\nb}$ and $\|\cdot\|_{2,\nb}$, see
\cite[Rem.~2.5]{GneEtAl16}.

By definition of \cite[(A3)]{GneEtAl16} this ensures, in particular,
that $\scp{\cdot}{\cdot}_{\gjc}$ is a scalar product on $\gjc$ that
turns the latter space into a reproducing kernel Hilbert space.
The closed graph theorem yields the equivalence of norms as
claimed.

Let $m$ denote the reproducing kernel of $\{f \in H_1 : f(a)=0\}$ in
$(\gjc, \scp{\cdot}{\cdot}_{\gjc})$ in the particular case $c \gamma_j=1$.
By definition, we have \eqref{gw1}, and 
\cite[Lem.~2.1, Rem.~2.2]{GneEtAl16} imply that the reproducing
kernel of 
$(\gjc, \scp{\cdot}{\cdot}_{\gjc})$ is given by $1 + c\gamma_j
\cdot m$ for all $j \in \N$ and $c>0$. 
\end{proof}

We stress the following important differences between the spaces
$(G_j,\scp{\cdot}{\cdot}_{G_j})$ and
$(\gjc, \scp{\cdot}{\cdot}_{\gjc})$.
In the latter case the orthogonal projection onto the space of
constant functions is easy to compute, but $(e_\nu)_{\nu \in \N_0}$ 
is an orthogonal system only in the trivial case that
the two scalar products of $G_j$ and $G^1_j$ coincide.

\begin{lemma}\label{lw2}
There exists a constant $0<c_0<1$ such that
\begin{equation}\label{gw2}
(1+c_0^{-1}\gamma_j)^{-1/2} \cdot \|f\|_{G_j^{c_0^{-1}}}
\leq \|f\|_{G_j} \leq
(1+\gamma_j)^{1/2} \cdot \|f\|_{G_j^{c_0}}
\end{equation}
and
\[
\|f\|_0 \leq (1+c_0^{-2}\gamma_j) \cdot \|f\|_{G_j^{c_0^{-1}}} 
\]
for all $j \in \N$ and $f \in H_1$.
\end{lemma}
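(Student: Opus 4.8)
The plan is to reduce everything to three scalar quantities attached to $f = \sum_{\nu \in \N_0} \hat f_\nu e_\nu \in H_1$, where $\hat f_\nu := \scp{f}{e_\nu}_0$: the Fourier value $A := \scp{f}{e_0}_0 = \hat f_0$ that enters $\|\cdot\|_{G_j}$, the point value $B := f(a)$ that enters $\|\cdot\|_{\gjc}$, and the common tail $q := \sum_{\nu \in \N} \alpha_{\nu,1}\,|\hat f_\nu|^2 = \|f\|_{2,\na}^2 = \|f\|_{2,\nb}^2$. In this notation $\|f\|_{G_j}^2 = |A|^2 + q/\gamma_j$ and $\|f\|_{\gjc}^2 = |B|^2 + q/(c\gamma_j)$, while $\|f\|_0^2 = |A|^2 + \sum_{\nu \in \N}|\hat f_\nu|^2 \le |A|^2 + q$, using $\alpha_{\nu,1} \ge 1$, which follows from \eqref{g6b} and \eqref{g6a}. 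So all four norms are simple weighted combinations of $|A|^2$, $|B|^2$, and $q$, and the whole lemma becomes a comparison of quadratic forms in these three quantities, uniformly in $j$.

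The crucial point is to control the discrepancy between the two functionals $A$ and $B$. Since $e_0 = 1$ gives $e_0(a) = 1$, we have $B - A = \sum_{\nu \in \N} \hat f_\nu\, e_\nu(a)$, so the Cauchy--Schwarz inequality yields $|B - A|^2 \le \kappa \cdot q$ with $\kappa := \sum_{\nu \in \N} \alpha_{\nu,1}^{-1}\,|e_\nu(a)|^2$. This constant is finite, indeed $\kappa = k_1(a,a) - 1 < \infty$, because $H_1 = G_1$ is a reproducing kernel Hilbert space in the standard setting, i.e.\ point evaluation at $a$ is bounded on $H_1$, cf.\ \eqref{g36}. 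Combined with the elementary estimate $|u + v|^2 \le (1+\eps)|u|^2 + (1 + \eps^{-1})|v|^2$, valid for every $\eps > 0$, this lets me trade $|A|^2$ for $|B|^2$ (and conversely) at the cost of an extra multiple of $q$, which is then reabsorbed by the $q$-terms of the norms.

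Concretely, I would fix $c_0 \in (0,1)$ with $c_0 \le (\kappa + 1)^{-1}$. For the upper bound in \eqref{gw2} I apply the elementary estimate with $\eps = \gamma_j$ to get $|A|^2 \le (1+\gamma_j)|B|^2 + (1 + \gamma_j^{-1})\kappa q$; adding $q/\gamma_j$ and using $0 < \gamma_j \le 1$ together with $c_0(\kappa+1)\le1$ gives $\|f\|_{G_j}^2 \le (1+\gamma_j)\|f\|_{G_j^{c_0}}^2$. The lower bound is symmetric: with $\eps = c_0^{-1}\gamma_j$ one bounds $|B|^2$ by $|A|^2$ and checks, again via $c_0(\kappa+1)\le1$ and $\gamma_j\le1$, that $\|f\|_{G_j^{c_0^{-1}}}^2 \le (1 + c_0^{-1}\gamma_j)\|f\|_{G_j}^2$. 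For the third inequality I start from $\|f\|_0^2 \le |A|^2 + q$, estimate $|A|^2 \le (1+2t)|B|^2 + (1 + (2t)^{-1})\kappa q$ with $t := c_0^{-2}\gamma_j$, and then bound $|B|^2 \le \|f\|_{G_j^{c_0^{-1}}}^2$ and $q \le (\gamma_j/c_0)\|f\|_{G_j^{c_0^{-1}}}^2$; the resulting constant is dominated by $(1 + c_0^{-2}\gamma_j)^2$ because $(1+t)^2/(c_0 t) \ge 4/c_0$ leaves ample room in the $q$-coefficient.

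The genuinely delicate point, and the main obstacle, is uniformity in $j$: the only free parameter is the single constant $c_0$, and every $j$-dependence has to be funnelled through $\gamma_j \in (0,1]$. The estimates must therefore be arranged with the $j$-adapted choices $\eps = \gamma_j$, $\eps = c_0^{-1}\gamma_j$, and $t = c_0^{-2}\gamma_j$ precisely so that the comparison constants reproduce the prescribed factors $(1+\gamma_j)$, $(1+c_0^{-1}\gamma_j)$, and $(1+c_0^{-2}\gamma_j)$; the condition $c_0 \le (\kappa+1)^{-1}$ is exactly what makes the leftover $q$-terms nonnegative for all admissible $\gamma_j$ at once. Everything else is the routine verification sketched above, and the finiteness of $\kappa$ (equivalently, that $G$, hence $G_1 = H_1$, is a reproducing kernel Hilbert space) is the only structural input beyond elementary inequalities.
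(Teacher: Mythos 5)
Your reduction of all four norms to the three quantities $A := \scp{f}{e_0}_0$, $B := f(a)$, and $q := \sum_{\nu \in \N} \alpha_{\nu,1}|\scp{f}{e_\nu}_0|^2$, together with the Cauchy--Schwarz bound $|B-A|^2 \le \kappa q$, $\kappa := \sum_{\nu \in \N}\alpha_{\nu,1}^{-1}|e_\nu(a)|^2 = k_1(a,a)-1 < \infty$, is sound and genuinely different from the paper's proof, which instead invokes the general norm-comparison result \cite[Thm.~2.1]{GneEtAl16} for \eqref{gw2} and obtains the third inequality by composing the embeddings $G_j^{c_0^{-1}} \hookrightarrow \tilde{G}_j \hookrightarrow H_0$. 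Your treatment of the two inequalities in \eqref{gw2} is correct: there you implicitly compare the coefficients of $|B|^2$ (resp.\ $|A|^2$) and of $q$ separately, and $c_0(\kappa+1)\le 1$ makes both comparisons hold uniformly in $\gamma_j \in (0,1]$.

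Your argument for the third inequality, however, has a genuine gap. Bounding $|B|^2 \le \|f\|_{G_j^{c_0^{-1}}}^2$ and $q \le (\gamma_j/c_0)\|f\|_{G_j^{c_0^{-1}}}^2$ separately and \emph{adding} the contributions yields, with $t := c_0^{-2}\gamma_j$ and $\gamma_j/c_0 = c_0 t$, the constant
\[
(1+2t) + \bigl[(1+(2t)^{-1})\kappa + 1\bigr]\cdot c_0 t
= 1 + 2t + (\kappa+1)c_0 t + \tfrac{1}{2}\kappa c_0 ,
\]
which must be at most $(1+t)^2 = 1+2t+t^2$. The additive term $\tfrac{1}{2}\kappa c_0$ does not vanish as $\gamma_j \to 0$, while the available room $t^2$ does; since \eqref{sumfinite} forces $\gamma_j \to 0$, the claimed domination fails for all large $j$ whenever $\kappa>0$. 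Concretely, $\kappa=1$, $c_0=\tfrac12$, $\gamma_j = 10^{-2}$ gives $t=0.04$ and the constant $1.37$, whereas $(1+t)^2 = 1.0816$. Moreover, no choice of the Peter--Paul parameter $\eps$ repairs this route: the summed constant always contains $\eps + \kappa c_0 t/\eps \ge 2\sqrt{\kappa c_0 t}$, which exceeds $2t+t^2$ for small $t$. Your stated justification, $(1+t)^2/(c_0 t)\ge 4/c_0$, cannot suffice either, since the $q$-coefficient you must absorb grows like $\kappa/(2t)$ as $t \to 0$.

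The fix stays entirely within your framework: do for the third inequality exactly what you did for \eqref{gw2}, namely compare coefficients instead of summing. From $\|f\|_0^2 \le |A|^2 + q \le (1+2t)|B|^2 + \bigl(\kappa + \kappa/(2t) + 1\bigr)q$ it suffices to check $1+2t \le (1+t)^2$ and
\[
\kappa + \frac{\kappa}{2t} + 1 \;\le\; (1+t)^2\cdot\frac{c_0}{\gamma_j} = \frac{(1+t)^2}{c_0 t}
\;\ge\; \frac{1}{c_0 t} + \frac{2}{c_0} \;\ge\; \frac{\kappa+1}{t} + 2(\kappa+1),
\]
and the right-hand side dominates the left-hand side precisely under your condition $c_0(\kappa+1)\le 1$. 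With this one-step replacement your proof is complete and self-contained, which is a real merit compared to the paper's reliance on the external theorem; the price is that the paper's argument generalizes beyond this concrete Fourier-weight situation, whereas yours uses the explicit diagonal structure of the norms.
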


\begin{proof}
According to the first paragraph of the proof of Lemma \ref{lw1}
we are in the situation from \cite{GneEtAl16}.
The inequality \eqref{gw2} follows directly from \cite[Thm.~2.1]{GneEtAl16}
and Lemma \ref{lw1}. 

Analogously, the norm of the embedding 
$G_j^{c_0^{-1}} \hookrightarrow \tilde{G}_j$ is bounded from above by
$(1+c_0^{-2}\gamma_j)^{1/2}$, where $\tilde{G}_j$ is defined as
$G_j$, however with new weights $c_0^{-2} \cdot \gamma_j$ instead
of $\gamma_j$.
Furthermore,
the norm of the embedding $\tilde{G}_j \hookrightarrow H_0$
is given by 
$\max\left( \sqrt{c_0^{-2} \gamma_j/\alpha_{1,1}},1\right)$, 
cf.\ Section \ref{s3.1}. 
It remains to observe that
\[
\max\left( \sqrt{c_0^{-2} \gamma_j/\alpha_{1,1}},1\right) \cdot
(1+c_0^{-2}\gamma_j)^{1/2}
\leq 
1+c_0^{-2}\gamma_j. 
\qedhere
\]
\end{proof}

Condition~\eqref{g37} for the space $G$ reads
\begin{equation*}
\forall\,\by \in D^\N: \quad
\sum_{\nu, j \in \N} 
(\alpha_{\nu,1}/\gamma_j)^{-1} \cdot |e_\nu (y_j)|^2 
< \infty.
\end{equation*}
Considering $\nu=1$ and some $y\in D$ such that $e_1(y)\neq 0$ yields
\begin{align}\label{sumfinite}
\sum_{j\in\N}\gamma_j<\infty.
\end{align}

For $c > 0$ we define
\begin{align*}
G^c
:=\bigotimes_{j\in\N} G_j^c.
\end{align*}
Note that different values of $c$ may lead to different spaces and not 
just to different norms,
see \cite{MR3325681},
and the spaces do not necessarily fit into the setting of 
Section~\ref{s3.1}.

\begin{lemma}\label{emb1dimg}
For every $c>0$ the space $G^c$ is a reproducing kernel Hilbert space of 
functions with domain $D^\N$ and reproducing kernel given by 
$\bigotimes_{j\in\N}(1+c\gamma_j\cdot m)$ with $m$ according to 
Lemma~\ref{lw1}. Furthermore, there exists a constant $0<c_0<1$ such 
that we have continuous embeddings
\begin{align*}
G^{c_0}
\hookrightarrow
G
\hookrightarrow
G^{c_0^{-1}}
\hookrightarrow
\bigotimes_{j\in\N} H_0.
\end{align*}
\end{lemma}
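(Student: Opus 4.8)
The plan is to assemble the statement from the single-variable result of Lemma~\ref{lw1} together with the tensor-product machinery of Section~\ref{s2.2} and the norm estimates of Lemma~\ref{lw2}. First I would verify that $G^c$ is a reproducing kernel Hilbert space with the asserted kernel. By Lemma~\ref{lw1}, each factor $(G_j^c,\scp{\cdot}{\cdot}_{G_j^c})$ is a reproducing kernel Hilbert space with kernel $1+c\gamma_j\cdot m$, where $m(a,a)=0$, and by construction the function $e_0=1$ is the distinguished unit vector used for the incomplete tensor product. To invoke Lemma~\ref{l3c} (or equivalently the kernel formula~\eqref{g32a}) I must check the summability condition~\eqref{g30a} for $G^c$, i.e.\ that for every $\by\in D^\N$ the double sum $\sum_{j}\sum_{\nu}|\Phi_j h_{\nu,j}(y_j)|^2$ is finite. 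Since the summands for the $j$-th factor contribute essentially $c\gamma_j\cdot m(y_j,y_j)$ (the trace of the non-constant part of the kernel), and since $m(y,y)$ is bounded on $D$ relative to the fixed reproducing kernel of $G_1$, finiteness reduces to~\eqref{sumfinite}, namely $\sum_j\gamma_j<\infty$, which is already available. This gives the kernel $\bigotimes_{j\in\N}(1+c\gamma_j\cdot m)$.

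Next I would establish the chain of embeddings. The rightmost embedding $G^{c_0^{-1}}\hookrightarrow\bigotimes_{j\in\N}H_0$ and the two inner embeddings should all be obtained factorwise and then tensorized. Lemma~\ref{lw2} provides, for a suitable $0<c_0<1$, the two-sided comparison~\eqref{gw2} between $\|\cdot\|_{G_j}$ and the norms $\|\cdot\|_{G_j^{c_0}}$, $\|\cdot\|_{G_j^{c_0^{-1}}}$, with explicit constants $(1+\gamma_j)^{1/2}$ and $(1+c_0^{-1}\gamma_j)^{1/2}$, together with the bound $\|f\|_0\le(1+c_0^{-2}\gamma_j)\|f\|_{G_j^{c_0^{-1}}}$ for the embedding into $H_0$. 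Each of these factor-level embedding norms is of the form $1+O(\gamma_j)$. The plan is to feed these per-factor norm bounds into the tensor-product norm estimate: for an incomplete tensor product built on a common unit vector, the norm of a tensor product of embeddings $T_j$ with $\|T_j\|\le 1+\delta_j$ is controlled by $\prod_{j}(1+\delta_j)$, which converges precisely when $\sum_j\delta_j<\infty$. Here every $\delta_j$ is a constant multiple of $\gamma_j$, so convergence follows again from~\eqref{sumfinite}.

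I expect the main obstacle to be the passage from factorwise embeddings to a bounded embedding of the full incomplete tensor products, since the tensor product of bounded operators need not be bounded on the incomplete tensor product without a product-convergence condition on the norms. The key point to check is that each factor embedding fixes the distinguished unit vector $e_0=1$ (equivalently maps the constant part to the constant part isometrically), so that only the deviation $\delta_j$ on the orthogonal complement enters, and that $\prod_j(1+\delta_j)<\infty$. Concretely I would cite or apply the tensor-product operator-norm fact recorded in Appendix~\ref{a1} (Lemma~\ref{l1} and the surrounding discussion) to conclude that $\bigotimes_j T_j$ is well-defined and bounded with norm at most $\prod_j\|T_j\|$. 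Once boundedness of each tensorized map is secured, the embedding chain $G^{c_0}\hookrightarrow G\hookrightarrow G^{c_0^{-1}}\hookrightarrow\bigotimes_{j\in\N}H_0$ follows immediately by tensorizing the corresponding factorwise inequalities from Lemma~\ref{lw2}, completing the proof.
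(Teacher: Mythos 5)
Your handling of the embedding chain is exactly the paper's argument: the paper deduces it ``directly from Lemma~\ref{lw2} and \eqref{sumfinite}'', and the mechanism is precisely the tensorization you spell out --- factorwise identity embeddings of norm $1+O(\gamma_j)$ that fix the unit vector $1$, combined with $\prod_j\bigl(1+O(\gamma_j)\bigr)<\infty$. (Only note that the operator fact you need is the unnumbered statement at the end of Appendix~\ref{a1}; Lemma~\ref{l1} is the summability lemma in Appendix~\ref{a2}.) For the first claim, however, the paper takes a shortcut: it simply combines \eqref{sumfinite} with \cite[Thm.~2.3]{GneEtAl16}, whereas you rebuild the statement from the paper's internal Section~\ref{s2} machinery (Lemma~\ref{l3c}, Remark~\ref{r50a}, formula \eqref{g32a}). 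That more self-contained route is legitimate, but it is exactly where your proposal has a gap.

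The gap is the summability check. Writing $1+k$ for the reproducing kernel of $G_1=H_1$, i.e.\ $k(x,y)=\sum_{\nu\in\N}\alpha_{\nu,1}^{-1}e_\nu(x)\overline{e_\nu(y)}$, condition \eqref{g30a} for $G^c$ amounts, as you say, to $\sum_{j\in\N}c\gamma_j\, m(y_j,y_j)<\infty$ for \emph{every} $\by\in D^\N$. Your justification --- ``$m(y,y)$ is bounded on $D$ relative to the kernel of $G_1$, hence finiteness reduces to \eqref{sumfinite}'' --- does not close. What one can prove is the relative bound $m(y,y)\le 2\bigl(k(y,y)+k(a,a)\bigr)$: for $f$ in the unit ball of $H(m)$ write $f=\scp{f}{e_0}_0+g$ with $g$ in the closed span of $(e_\nu)_{\nu\ge 1}$; then $\|g\|_{H(k)}\le 1$ and $f(a)=0$ forces $\scp{f}{e_0}_0=-g(a)$, so $|f(y)|\le\sqrt{k(a,a)}+\sqrt{k(y,y)}$. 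But this bound together with \eqref{sumfinite} only reduces the problem to $\sum_{j\in\N}\gamma_j\,k(y_j,y_j)<\infty$ for all $\by$, and that is \emph{not} a consequence of \eqref{sumfinite}: it is precisely condition \eqref{g37} specialized to $G$, i.e.\ the standing assumption of Section~\ref{sec35} that $G$ is a reproducing kernel Hilbert space. So you must invoke that assumption beyond \eqref{sumfinite}; alternatively, first show that it forces $\sup_{y\in D}k(y,y)<\infty$ (if $k$ were unbounded on the diagonal, one could choose $y_j$ with $k(y_j,y_j)\ge\gamma_j^{-1}$ and violate \eqref{g37}), after which $m$ is genuinely bounded on the diagonal and your reduction holds. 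A second, smaller omission: Lemma~\ref{l3c} requires injectivity of $\Phi$, which you never verify; it follows from the Fischer--Riesz argument of Remark~\ref{r50a}, applicable because each $G_j^c$ embeds continuously into $L_2(D,\mu_0)$ by Lemma~\ref{lw2}.
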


\begin{proof}
According to the first paragraph of the proof of Lemma~\ref{lw1}
we are in the situation from \cite{GneEtAl16}.

Combing \eqref{sumfinite} with \cite[Thm.~2.3]{GneEtAl16} yields the 
first claim. The second claim follows directly from Lemma~\ref{lw2} 
and \eqref{sumfinite}.
\end{proof}

We proceed in the same way for the space $F$.
For $j \in \N$ and $c > 0$ we define
\[
\fjc := F_j = F_1
\]
and
\[
\scp{f}{g}_{\fjc} := 
f(a) \cdot \overline{g(a)} + 
\frac{\alpha_{1,j}}{c} \cdot
\scp{f}{e_1}_0 \cdot \scp{e_1}{g}_0,
\]
where $f,g \in F_1$.

\begin{lemma}\label{lem1234}
For all $j \in \N$ and $c>0$ the space
$(\fjc, \scp{\cdot}{\cdot}_{\fjc})$ is a reproducing kernel Hilbert
space of functions with domain $D$, and its norm is equivalent to 
$\|\cdot\|_{F_1}$. 
Moreover, there exists a (uniquely defined) reproducing
kernel $\ell$ on $D \times D$ such that 
$1+ c\alpha_{1,j}^{-1} \cdot \ell$ is the 
reproducing kernel of 
$(\fjc, \scp{\cdot}{\cdot}_{\fjc})$ 
for all $j \in \N$ and $c>0$, and
\[
\ell(a,a) = 0.
\]
\end{lemma}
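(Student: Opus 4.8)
The statement of Lemma~\ref{lem1234} is the exact analogue of Lemma~\ref{lw1}, but for the two-dimensional spaces $F_j = \spann\{e_0,e_1\}$ in place of the spaces $G_j = H_1$. Since Lemma~\ref{lw1} was proved by invoking the abstract framework of \cite{GneEtAl16} (conditions (A1)--(A3) and the kernel-decomposition lemmata quoted there), my plan is to run the identical argument with the roles of the two seminorms adapted to the present weight $\alpha_{1,j}^{-1}$. Concretely, for $f \in F_1 = \spann\{e_0,e_1\}$ I would introduce the seminorms
\[
\|f\|_{1,\na} := |\scp{f}{e_0}_0|, \qquad \|f\|_{1,\nb} := |f(a)|,
\]
together with
\[
\|f\|_{2,\na}^2 := \|f\|_{2,\nb}^2 := |\scp{f}{e_1}_0|^2,
\]
mirroring exactly the setup in the proof of Lemma~\ref{lw1} but with the single Fourier index $\nu=1$ and weight $\alpha_{1,1}=1$ absorbed into the normalization. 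The scalar product $\scp{\cdot}{\cdot}_{\fjc}$ then carries the weight $\alpha_{1,j}/c$ on the $\|\cdot\|_{2,\nb}$ part, which is the two-dimensional specialization of the construction already verified to satisfy \cite[(A1)--(A3)]{GneEtAl16}.

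The key steps, in order, are as follows. First I would verify that the pair $(\|\cdot\|_{1,\na},\|\cdot\|_{2,\na})$ satisfies \cite[(A1)--(A3)]{GneEtAl16} on the vector space $F_1$, citing \cite[Rem.~2.1]{GneEtAl16} just as in Lemma~\ref{lw1}; this is immediate since $F_1$ is merely the two-dimensional subspace on which the corresponding norm restricts. Second, by \cite[Rem.~2.5]{GneEtAl16} the pair $(\|\cdot\|_{1,\nb},\|\cdot\|_{2,\nb})$ obtained by replacing the functional $\scp{\cdot}{e_0}_0$ with the point evaluation $f \mapsto f(a)$ also satisfies the conditions, which is where the anchoring enters. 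Third, the definition of condition \cite[(A3)]{GneEtAl16} guarantees that $\scp{\cdot}{\cdot}_{\fjc}$ is genuinely a scalar product turning $\fjc$ into a reproducing kernel Hilbert space; the closed graph theorem (again exactly as in Lemma~\ref{lw1}) gives the equivalence of $\|\cdot\|_{\fjc}$ with $\|\cdot\|_{F_1}$, since all norms on the finite-dimensional space $F_1$ are equivalent anyway. Finally, letting $\ell$ denote the reproducing kernel of the subspace $\{f \in F_1 : f(a)=0\}$ computed in the normalized case $c\alpha_{1,j}^{-1}=1$, I would appeal to \cite[Lem.~2.1, Rem.~2.2]{GneEtAl16} to conclude that the reproducing kernel of $(\fjc,\scp{\cdot}{\cdot}_{\fjc})$ is $1 + c\alpha_{1,j}^{-1}\cdot\ell$ for all $j$ and $c$, with $\ell(a,a)=0$ holding by construction.

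I expect no serious obstacle here: because $F_1$ is two-dimensional, the conditions of \cite{GneEtAl16} are satisfied trivially and there are no convergence issues to control, in contrast to the infinite-dimensional space $G_j=H_1$ treated in Lemma~\ref{lw1}. The only point requiring a moment's care is confirming that $f \mapsto f(a)$ is not identically zero on $F_1$, i.e.\ that $e_0(a)=1\neq 0$, so that $\|\cdot\|_{1,\nb}$ together with $\|\cdot\|_{2,\nb}$ really does define a norm and the decomposition $H(1)\cap H(\ell)=\{0\}$ holds; this is automatic since $e_0=1$. The reproducing kernel of $F_j$ has the explicit form $\ell(x,y)=e_1(x)\overline{e_1(y)}$ up to normalization, as already noted in the discussion preceding Lemma~\ref{lw1}, so the uniqueness of $\ell$ and the anchoring $\ell(a,a)=0$ can alternatively be read off directly; but invoking the \cite{GneEtAl16} machinery keeps the argument uniform with the proof of Lemma~\ref{lw1}.
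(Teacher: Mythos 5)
Your main argument is correct and is exactly the proof the paper intends: Lemma~\ref{lem1234} is stated without proof, immediately after the words ``We proceed in the same way for the space $F$,'' and your adaptation of the proof of Lemma~\ref{lw1} --- the two seminorm pairs on $F_1$, the appeal to \cite[Rem.~2.1, Rem.~2.5]{GneEtAl16} and condition (A3) for the reproducing kernel Hilbert space property, norm equivalence (trivial here by finite-dimensionality), and \cite[Lem.~2.1, Rem.~2.2]{GneEtAl16} for the scaling --- is precisely that argument. Your bookkeeping is also right: choosing $\|f\|_{2,\nb}^2 = |\scp{f}{e_1}_0|^2$ makes the weight in $\scp{\cdot}{\cdot}_{\fjc}$ equal to $c\alpha_{1,j}^{-1}$, so the kernel comes out as $1+c\alpha_{1,j}^{-1}\cdot\ell$ exactly as stated.

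Your final remark, however, is wrong and should be struck. The kernel $\ell$ is \emph{not} $e_1(x)\overline{e_1(y)}$ up to normalization: that is the reproducing kernel of the orthogonal complement of the constants in $(F_j,\scp{\cdot}{\cdot}_{F_j})$, i.e., for the \emph{original} inner product, and the discussion you cite says precisely that this kernel fails to be anchored in Examples~\ref{ex11} and~\ref{eha}, where $|e_1(a)|=1\neq 0$ for every $a\in D$. What the anchored construction produces is, with your normalization,
\[
\ell(x,y) = \bigl(e_1(x)-e_1(a)\bigr)\cdot\overline{\bigl(e_1(y)-e_1(a)\bigr)},
\]
since $\{f\in F_1\colon f(a)=0\}$ is spanned by $e_1-e_1(a)\cdot 1$, which satisfies $\scp{e_1-e_1(a)\cdot 1}{e_1}_0=1$ and hence has squared $\fjc$-norm $\alpha_{1,j}/c$. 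This coincides with $e_1(x)\overline{e_1(y)}$ only if $e_1(a)=0$, which holds for the special anchors in Examples~\ref{ex13} and~\ref{s3:e4} but not in general. So $\ell(a,a)=0$ cannot be ``read off'' from $e_1(x)\overline{e_1(y)}$; it holds because $\ell$ is by construction the kernel of a space of functions vanishing at $a$ --- which is what your main, machinery-based argument correctly establishes. A second, minor slip: \eqref{g6a} requires $\alpha_{1,1}>1$, not $\alpha_{1,1}=1$; your normalization does not actually depend on this, so nothing else in the proof is affected.
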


\begin{lemma}\label{l200}
There exists a constant $0<c_0<1$ such that
\[
(1+c_0^{-1}\alpha_{1,j}^{-1})^{-1/2} \cdot \|f\|_{F_j^{c_0^{-1}}}
\leq \|f\|_{F_j} \leq
(1+\alpha_{1,j}^{-1})^{1/2} \cdot \|f\|_{F_j^{c_0}}
\]
and
\[
\|f\|_0 \leq (1+c_0^{-2}\alpha_{1,j}^{-1}) \cdot \|f\|_{F_j^{c_0^{-1}}} 
\]
for all $j \in \N$ and $f \in F_1$.
\end{lemma}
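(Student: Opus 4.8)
The plan is to recognize Lemma~\ref{l200} as the exact analogue of Lemma~\ref{lw2}, with the role of the space $G_j = H_1$ (built from the full sequence $(\alpha_{\nu,1})_{\nu\in\N}$) now played by the two-dimensional space $F_j = \spann\{e_0,e_1\}$, and the role of the weight $\gamma_j$ played by $\alpha_{1,j}^{-1}$. Indeed, the scalar product $\scp{\cdot}{\cdot}_{F_j}$ puts weight $\alpha_{1,j}$ on the single coordinate $e_1$, while $\scp{\cdot}{\cdot}_{\fjc}$ replaces the $\ell^2$-part by $\tfrac{\alpha_{1,j}}{c}\,|\scp{f}{e_1}_0|^2$ and anchors the constant part at $a$. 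So I would first verify that the abstract hypotheses from \cite{GneEtAl16} are met for $F_1$ exactly as they were for $H_1$: I set $\|f\|_{1,\na}:=|\scp{f}{e_0}_0|$, $\|f\|_{1,\nb}:=|f(a)|$, and $\|f\|_{2,\na}^2:=\|f\|_{2,\nb}^2:=\alpha_{1,1}\cdot|\scp{f}{e_1}_0|^2$ on the two-dimensional space $F_1$, and check conditions \cite[(A1)--(A3)]{GneEtAl16} together with their counterparts \cite[Rem.~2.5]{GneEtAl16}. This mirrors the first paragraph of the proof of Lemma~\ref{lw1}, which is already invoked in Lemma~\ref{lem1234}.

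Once the setting of \cite{GneEtAl16} is in force, the two-sided norm estimate is a direct citation: the first displayed inequality of Lemma~\ref{l200} follows from \cite[Thm.~2.1]{GneEtAl16} combined with Lemma~\ref{lem1234}, just as the inequality \eqref{gw2} in Lemma~\ref{lw2} did, now with the weight $\alpha_{1,j}^{-1}$ in place of $\gamma_j$. The constant $0<c_0<1$ is the one furnished by that theorem, and it is uniform in $j$ because the underlying univariate seminorm structure does not depend on $j$ (the $j$-dependence enters only through the scalar weight $\alpha_{1,j}^{-1}$). I would therefore state the first inequality as an immediate consequence, without recomputation.

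For the second inequality, bounding $\|f\|_0$ by a multiple of $\|f\|_{F_j^{c_0^{-1}}}$, I would follow the factorization used in Lemma~\ref{lw2} verbatim. Introduce an auxiliary space $\tilde F_j$ defined like $F_j$ but with the enlarged weight $c_0^{-2}\alpha_{1,j}^{-1}$ replacing $\alpha_{1,j}^{-1}$; the norm of the embedding $F_j^{c_0^{-1}}\hookrightarrow\tilde F_j$ is at most $(1+c_0^{-2}\alpha_{1,j}^{-1})^{1/2}$, and the norm of $\tilde F_j\hookrightarrow H_0$ equals $\max\bigl(\sqrt{c_0^{-2}\alpha_{1,j}^{-1}/\alpha_{1,1}},\,1\bigr)$ by the singular-value computation recalled in Section~\ref{s3.1} (here $\alpha_{1,1}$ is the Fourier weight attached to $e_1$ in the $H_1$-norm). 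Multiplying these two bounds and using the elementary inequality
\[
\max\bigl(\sqrt{c_0^{-2}\alpha_{1,j}^{-1}/\alpha_{1,1}},\,1\bigr)\cdot(1+c_0^{-2}\alpha_{1,j}^{-1})^{1/2}\leq 1+c_0^{-2}\alpha_{1,j}^{-1}
\]
yields the claim; note $\alpha_{1,1}>1$ by \eqref{g6a} guarantees the first factor is controlled.

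I do not expect a genuine obstacle here, since the proof is a faithful transcription of the argument for Lemma~\ref{lw2} with the one-dimensional $\ell^2$-part of $H_1$ truncated to its first coordinate. The only point requiring a moment's care is bookkeeping the weight: in the $G_j$-setting the relevant weight was $\gamma_j=\sup_\nu\alpha_{\nu,1}/\alpha_{\nu,j}$, whereas here it is $\alpha_{1,j}^{-1}$ scaled against $\alpha_{1,1}$, so I must make sure the $\alpha_{1,1}$ appears correctly in the embedding norm of $\tilde F_j\hookrightarrow H_0$ and that the final elementary inequality still closes with these constants.
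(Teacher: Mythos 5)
Your overall route is the intended one: the paper states Lemma~\ref{l200} without proof precisely because it is meant to be the verbatim analogue of Lemma~\ref{lw2} (``We proceed in the same way for the space $F$''), and your plan --- verify (A1)--(A3) of \cite{GneEtAl16}, cite \cite[Thm.~2.1]{GneEtAl16} for the two-sided bound, then factor through an auxiliary space $\tilde F_j$ for the $H_0$-bound --- is exactly that transcription. The problem is your normalization of the quadratic seminorm. You set $\|f\|_{2,\na}^2=\|f\|_{2,\nb}^2:=\alpha_{1,1}\cdot|\scp{f}{e_1}_0|^2$. With that choice the weight which \cite[Thm.~2.1]{GneEtAl16} sees is not $\alpha_{1,j}^{-1}$ but $\alpha_{1,1}\alpha_{1,j}^{-1}$, because
\[
\|f\|_{F_j}^2=|\scp{f}{e_0}_0|^2+\bigl(\alpha_{1,1}\alpha_{1,j}^{-1}\bigr)^{-1}\,\alpha_{1,1}|\scp{f}{e_1}_0|^2,
\qquad
\|f\|_{\fjc}^2=|f(a)|^2+\bigl(c\,\alpha_{1,1}\alpha_{1,j}^{-1}\bigr)^{-1}\,\alpha_{1,1}|\scp{f}{e_1}_0|^2 .
\]
So the citation delivers the first display of the lemma with $\alpha_{1,1}\alpha_{1,j}^{-1}$ in place of $\alpha_{1,j}^{-1}$, which is strictly weaker on both sides since $\alpha_{1,1}>1$ by \eqref{g6a}; and this cannot be repaired by adjusting $c_0$, because all the norms involved coincide on constant functions, so the multiplicative constants must be compared as they stand. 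The normalization forced by the statement of the lemma --- and already fixed by Lemma~\ref{lem1234}, where $1+c\,\alpha_{1,j}^{-1}\ell$ reproduces $\fjc$ --- is $\|f\|_{2,\na}^2=\|f\|_{2,\nb}^2:=|\scp{f}{e_1}_0|^2$, with no factor $\alpha_{1,1}$; then $F_j$ and $\fjc$ carry the weights $\alpha_{1,j}^{-1}$ and $c\,\alpha_{1,j}^{-1}$ exactly, and the cited theorem gives the stated constants.

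The same misplaced factor infects your second step. The space $\tilde F_j$, i.e.\ $F_j$ with weight $c_0^{-2}\alpha_{1,j}^{-1}$, has coefficient weight $c_0^{2}\alpha_{1,j}$ on $e_1$, so the norm of $\tilde F_j\hookrightarrow H_0$ equals $\max\bigl(\sqrt{c_0^{-2}\alpha_{1,j}^{-1}},1\bigr)$ and not $\max\bigl(\sqrt{c_0^{-2}\alpha_{1,j}^{-1}/\alpha_{1,1}},1\bigr)$: in the $G$-case the $\alpha_{1,1}$ genuinely appears because $\|\cdot\|_{2,\na}$ carries the Fourier weights $\alpha_{\nu,1}$ and $\inf_\nu\alpha_{\nu,1}=\alpha_{1,1}$, whereas the $F_j$-scalar product puts the bare weight $\alpha_{1,j}$ on $e_1$, so here $\alpha_{1,1}$ must not appear at all. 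This particular slip happens to be harmless, since $\max(\sqrt{x},1)\cdot(1+x)^{1/2}\le 1+x$ with $x:=c_0^{-2}\alpha_{1,j}^{-1}$ still closes the chain; but combined with the first issue, your argument as written proves Lemma~\ref{l200} only with $\alpha_{1,j}^{-1}$ replaced throughout by $\alpha_{1,1}\alpha_{1,j}^{-1}$ --- a weaker statement (which, incidentally, would still suffice for Lemma~\ref{emb1dimf}, since $\sum_j \alpha_{1,1}\alpha_{1,j}^{-1}<\infty$ iff $\sum_j \alpha_{1,j}^{-1}<\infty$), but not the statement of the lemma. Dropping the factor $\alpha_{1,1}$ from $\|\cdot\|_2$ repairs the proof and makes it identical to the paper's intended argument.
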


Observe that $F$ is a reproducing kernel Hilbert space.
Furthermore, from \eqref{sumfinite} we get
\[
\sum_{j\in\N}\alpha_{1,j}^{-1}<\infty.
\]

For $c > 0$ we define
\begin{align*}
F^c
:=\bigotimes_{j\in\N} F_j^c.
\end{align*}

\begin{lemma}\label{emb1dimf}
For every $c>0$ the space $F^c$ is a reproducing kernel Hilbert space 
of functions with domain $D^\N$ and reproducing kernel given by 
$\bigotimes_{j\in\N}(1+c\alpha_{1,j}^{-1}\cdot \ell)$ with $\ell$ 
according to Lemma~\ref{lem1234}. Furthermore, there exists a constant 
$0<c_0<1$ such that we have continuous embeddings
\begin{align*}
F^{c_0}
\hookrightarrow
F
\hookrightarrow
F^{c_0^{-1}}
\hookrightarrow
\bigotimes_{j\in\N} H_0.
\end{align*}
\end{lemma}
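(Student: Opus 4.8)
The plan is to follow exactly the template established for the $G$-spaces in Lemma~\ref{emb1dimg}, since the $F$-space construction is formally identical with the role of the weights $\gamma_j$ now played by $\alpha_{1,j}^{-1}$. First I would invoke Lemma~\ref{lem1234}, whose proof places us in the framework of \cite{GneEtAl16}: each one-dimensional space $(\fjc,\scp{\cdot}{\cdot}_{\fjc})$ is a reproducing kernel Hilbert space with reproducing kernel $1+c\alpha_{1,j}^{-1}\cdot\ell$, where $\ell$ is anchored at $a$. The summability of the weight sequence $(\alpha_{1,j}^{-1})_{j\in\N}$ has already been recorded just before the statement (as a consequence of \eqref{sumfinite}), and this is precisely the hypothesis required to apply \cite[Thm.~2.3]{GneEtAl16} to the tensor product. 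That theorem then yields that $F^c$ is a reproducing kernel Hilbert space of functions on $D^\N$ whose kernel is the product $\bigotimes_{j\in\N}(1+c\alpha_{1,j}^{-1}\cdot\ell)$, establishing the first claim.

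For the chain of continuous embeddings I would argue tensor-factor by tensor-factor, using the norm estimates from Lemma~\ref{l200} exactly as Lemma~\ref{lw2} was used in the proof of Lemma~\ref{emb1dimg}. The constant $0<c_0<1$ supplied by Lemma~\ref{l200} gives, for each $j$, the two-sided comparison between $\|\cdot\|_{F_j}$ and $\|\cdot\|_{F_j^{c_0}}$, $\|\cdot\|_{F_j^{c_0^{-1}}}$ with factors $(1+\alpha_{1,j}^{-1})^{1/2}$ and $(1+c_0^{-1}\alpha_{1,j}^{-1})^{1/2}$, respectively, together with the embedding into $H_0$ of norm $(1+c_0^{-2}\alpha_{1,j}^{-1})$. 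Because $\sum_{j\in\N}\alpha_{1,j}^{-1}<\infty$, the infinite products $\prod_{j\in\N}(1+\alpha_{1,j}^{-1})$, $\prod_{j\in\N}(1+c_0^{-1}\alpha_{1,j}^{-1})$, and $\prod_{j\in\N}(1+c_0^{-2}\alpha_{1,j}^{-1})$ all converge. These finite products bound the norms of the tensor-product embeddings $F^{c_0}\hookrightarrow F$, $F\hookrightarrow F^{c_0^{-1}}$, and $F^{c_0^{-1}}\hookrightarrow\bigotimes_{j\in\N}H_0$, so each embedding is continuous.

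Thus the whole proof collapses to two invocations: \cite[Thm.~2.3]{GneEtAl16} for the kernel identification and Lemma~\ref{l200} combined with the summability of $(\alpha_{1,j}^{-1})_{j\in\N}$ for the embeddings. I expect the main (and essentially only nontrivial) point to be checking that the weight summability hypothesis required by the cited tensor-product theorem is genuinely met, but this has already been secured by the remark preceding the statement; given that, the argument is a direct transcription of the proof of Lemma~\ref{emb1dimg}. Concretely I would write:

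\begin{proof}
According to the proof of Lemma~\ref{lem1234} we are in the situation of \cite{GneEtAl16}, and the reproducing kernel of $(\fjc,\scp{\cdot}{\cdot}_{\fjc})$ is $1+c\alpha_{1,j}^{-1}\cdot\ell$ with $\ell$ anchored at $a$. Combining $\sum_{j\in\N}\alpha_{1,j}^{-1}<\infty$ with \cite[Thm.~2.3]{GneEtAl16} yields that $F^c$ is a reproducing kernel Hilbert space with the stated kernel. The chain of continuous embeddings then follows directly from Lemma~\ref{l200} together with $\sum_{j\in\N}\alpha_{1,j}^{-1}<\infty$, exactly as in the proof of Lemma~\ref{emb1dimg}.
\end{proof}
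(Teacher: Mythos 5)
Your proposal is correct and follows exactly the route the paper intends: the paper gives no separate proof of this lemma, introducing the $F$-constructions with ``We proceed in the same way for the space $F$,'' so the intended argument is precisely the transcription of the proof of Lemma~\ref{emb1dimg} that you wrote, i.e., combining the summability of $(\alpha_{1,j}^{-1})_{j\in\N}$ with \cite[Thm.~2.3]{GneEtAl16} for the kernel identification and with Lemma~\ref{l200} for the chain of embeddings.
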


Combining Theorem \ref{t0}, Lemma~\ref{emb1dimg}, and
Lemma~\ref{emb1dimf} yields the following result.

\begin{theo}\label{t1}
Consider the standard setting, and assume that $G$ is a reproducing
kernel Hilbert space.
Then there exists a constant $0 < c_0 <1$ with the following
properties. We  have continuous 
embeddings
\begin{center}
\begin{tikzcd}
F^{c_0}\arrow[hook]{d}
&
&G^{c_0}\arrow[hook]{d}\\
F\arrow[hook]{d}\arrow[hook]{r}&H\arrow[hook]{r}&G\arrow[hook]{d}\\
F^{c_0^{-1}}\arrow[hook]{d}&&G^{c_0^{-1}}\arrow[hook]{d}
\\
\bigotimes_{j\in\N} H_0&&\bigotimes_{j\in\N} H_0,
\end{tikzcd}
\end{center}
and $F^{c_0^{-1}}$ as well as $G^{c_0^{-1}}$
are reproducing kernel Hilbert spaces.
\end{theo}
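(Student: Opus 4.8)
The statement is an assembly result: it packages the three separately-established embedding chains into a single commutative diagram. The plan is to read off each arrow from a result already proved earlier in the excerpt, and then to verify that the resulting square diagrams genuinely commute, so that the \texttt{tikz-cd} picture is not merely a collection of arrows but an honest statement about compositions of embeddings.

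First I would fix the constant. Lemma~\ref{emb1dimg} furnishes a constant $0<c_0<1$ giving the chain $G^{c_0}\hookrightarrow G\hookrightarrow G^{c_0^{-1}}\hookrightarrow\bigotimes_{j\in\N}H_0$, and Lemma~\ref{emb1dimf} furnishes a (possibly different) constant with the analogous chain for $F$. Since each lemma asserts the existence of \emph{some} such constant, and since shrinking the constant only tightens the norm inequalities in Lemmas~\ref{lw2} and~\ref{l200}, I would take $c_0$ to be the minimum of the two constants, so that a single $c_0\in(0,1)$ serves both columns simultaneously. This gives the right column $G^{c_0}\hookrightarrow G\hookrightarrow G^{c_0^{-1}}\hookrightarrow\bigotimes_{j\in\N}H_0$ and the left column $F^{c_0}\hookrightarrow F\hookrightarrow F^{c_0^{-1}}\hookrightarrow\bigotimes_{j\in\N}H_0$ directly.

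Next I would supply the two horizontal arrows $F\hookrightarrow H\hookrightarrow G$ from Theorem~\ref{t0}, which provides exactly these embeddings with norm one in the abstract setting (and hence in the standard setting, which is a special case). The final clause, that $F^{c_0^{-1}}$ and $G^{c_0^{-1}}$ are reproducing kernel Hilbert spaces, is the first claim of Lemmas~\ref{emb1dimf} and~\ref{emb1dimg} respectively, valid for every $c>0$ and in particular for $c=c_0^{-1}$; their explicit kernels are $\bigotimes_{j\in\N}(1+c_0^{-1}\alpha_{1,j}^{-1}\cdot\ell)$ and $\bigotimes_{j\in\N}(1+c_0^{-1}\gamma_j\cdot m)$.

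The step requiring genuine care, and the one I expect to be the main (if modest) obstacle, is checking commutativity of the diagram rather than just producing the arrows. All the spaces in the two outer columns consist of the \emph{same} underlying functions on $D^\N$ as $F$, $H$, and $G$ (they differ only in norm), and likewise $F$, $H$, $G$ share the underlying function-class structure through the common orthonormal basis $(e_\bnu)$; so every arrow in the diagram is the identity-on-functions embedding, and any two directed paths with the same endpoints compute the same function-valued map. This is why the squares commute automatically and no separate compatibility verification is needed beyond observing that each embedding is the set-theoretic inclusion of function spaces. I would state this explicitly as the one-line justification that the diagram commutes, and then conclude by simply citing the three results above for the existence and continuity of every displayed arrow.
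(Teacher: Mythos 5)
Your overall architecture coincides with the paper's proof, which disposes of Theorem \ref{t1} in one line by combining Theorem \ref{t0}, Lemma \ref{emb1dimg}, and Lemma \ref{emb1dimf}; citing the two lemmas for the vertical chains and the RKHS claims, and Theorem \ref{t0} for the horizontal arrows, is exactly the intended argument. However, the one step you single out as needing care --- producing a \emph{common} constant $c_0$ --- is justified by a claim that is false. It is not true that shrinking the constant only tightens the norm inequalities in Lemmas \ref{lw2} and \ref{l200}. Shrinking $c_0$ strengthens the norm of $G_j^{c_0}$ but \emph{weakens} the norm of $G_j^{c_0^{-1}}$ (the factor $1/(c_0^{-1}\gamma_j)=c_0/\gamma_j$ decreases). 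Hence the two inequalities in \eqref{gw2} do transfer to any smaller constant by monotonicity, but the third inequality, $\|f\|_0 \le (1+c_0^{-2}\gamma_j)\,\|f\|_{G_j^{c_0^{-1}}}$, does not: at the level of spaces, $c_0'<c_0$ gives $G^{c_0^{-1}} \subseteq G^{(c_0')^{-1}}$ with the latter the \emph{larger} space, so the bottom arrow $G^{(c_0')^{-1}} \hookrightarrow \bigotimes_{j\in\N} H_0$ is a strictly stronger assertion than the one furnished by Lemma \ref{emb1dimg} and cannot follow from it formally. The naive repair, $\|f\|_{G_j^{c_0^{-1}}} \le (c_0/c_0')^{1/2}\,\|f\|_{G_j^{(c_0')^{-1}}}$, costs a fixed factor $(c_0/c_0')^{1/2}>1$ in \emph{every} coordinate, and the infinite product of these factors diverges, so tensorization fails along this route. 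Consequently, with your argument as written, whichever column came with the larger lemma-constant is left without its arrow into $\bigotimes_{j\in\N} H_0$.

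The gap is fixable, but only by going back inside the proofs of Lemmas \ref{lw2} and \ref{l200} rather than quoting their statements: the constant provided by \cite[Thm.~2.1]{GneEtAl16} is universal in the weights, i.e., the anchored space with weight $\theta$ embeds into the unanchored space with weight $\theta/c_0$ with norm at most $(1+\theta/c_0)^{1/2}$ for every $\theta>0$. Applying this with $\theta=(c_0')^{-1}\gamma_j$ and then embedding the unanchored space into $H_0$ (norm $\max\bigl(\sqrt{\theta/(c_0\alpha_{1,1})},1\bigr)$, as in the paper's proof) yields $\|f\|_0 \le \bigl(1+(c_0')^{-2}\gamma_j\bigr)\,\|f\|_{G_j^{(c_0')^{-1}}}$, which does tensorize by \eqref{sumfinite}; with this the ``take the minimum'' device is sound. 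A second, smaller inaccuracy: you assert that all spaces in the outer columns consist of the same underlying functions as $F$, $H$, $G$, differing only in norm, but the paper explicitly warns (just before Lemma \ref{emb1dimg}, citing \cite{MR3325681}) that different values of $c$ may lead to \emph{different spaces}, not just different norms. Commutativity survives for a simpler reason: the diagram as drawn contains no closed cycles (the two copies of $\bigotimes_{j\in\N}H_0$ are separate nodes), and in any case every arrow is a pointwise inclusion, being the identity on elementary tensors and extended continuously between spaces with continuous point evaluations.
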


\section{Infinite-Dimensional Approximation and Integration}\label{s4}

Consider a bounded linear operator 
$S\colon \mathcal{X}\to \mathcal{Z}$ 
between two $\K$-Hilbert spaces as well as a non-decreasing
sequence $\mathcal{A} = (\mathcal{A}_n)_{n \in \N}$ of sets
$\mathcal{A}_n$ of bounded linear operators
between $\mathcal{X}$ and $\mathcal{Z}$.
We study the corresponding $n$-th minimal worst case error 
\begin{align*}
\err_n(S,\mathcal{A}) :=
\inf_{A \in \mathcal{A}_n}
\sup
\{ \|S(f)-A(f)\|_{\mathcal{Z}} \colon
f\in \mathcal{X},\ \|f\|_{\mathcal{X}}\leq 1\};
\end{align*}
more precisely, we determine
\[
\dec (S,\mathcal{A}) := 
\decay \left( 
\left(\err_n(S,\mathcal{A})\right)_{n \in \N} \right).
\]
To stress the dependence on $\mathcal{X}$ we often write
$\err_n(\mathcal{X},S,\mathcal{A})$ and 
$\dec(\mathcal{X},S,\mathcal{A})$.
Observe that
\[
\decay(x) = 
\sup\{\tau > 0 : \sup_{i \in \N} x_i \cdot i^\tau < \infty\}
\]
for any non-increasing sequence $x = (x_i)_{i \in \N}$ of positive
reals, cf. \cite[p.~311]{Was12}, and note that lower bounds for 
$\dec (S,\mathcal{A})$ correspond to upper bounds for the $n$-th
minimal errors $\err_n(S,\mathcal{A})$ and vice versa.

For the approximation problem we have $\mathcal{X} \subseteq
\mathcal{Z}$ with a continuous embedding, and $S = \App$
is the corresponding embedding operator
\[
\App\colon 
\mathcal{X} \hookrightarrow \mathcal{Z}.
\]
For the integration problem we have $\mathcal{Z} = \K$, 
and $S=\Int$ is a bounded linear functional 
\[
\Int\colon 
\mathcal{X} \to \K,
\]
which is defined by means of integration with respect to a probability
measure.

Theorems \ref{t0} and \ref{t1} allow us to derive
results for linear problems, like approximation and integration,
on the tensor product $H$ of spaces of increasing smoothness
from known results for the weighted tensor product 
spaces $F$ and $G$ or $F^{c_0}$ and $G^{c_0^{-1}}$, where the
latter pair of spaces is, additionally, based on anchored kernels.
Under the corresponding assumptions we have
\begin{equation}\label{g300}
\dec(G,S,\mathcal{A}) \leq 
\dec(H,S,\mathcal{A}) \leq 
\dec(F,S,\mathcal{A})
\end{equation}
and 
\begin{equation}\label{g301}
\dec(G^{c_0^{-1}},S,\mathcal{A}) \leq 
\dec(H,S,\mathcal{A}) \leq 
\dec(F^{c_0},S,\mathcal{A}).
\end{equation}
Furthermore, $H_1$ can be isometrically
embedded into $H$ via 
$f\mapsto f\otimes \big( \bigotimes_{n\ge 2}\, e_0 \big)$. 
If we identify $H_1$ with its image under this embedding, we may 
consider a non-decreasing sequence 
$\bb = \big( \bb_n \big)_{n\in \N}$ 
of sets $\bb_n$ of bounded
linear operators between $H_1$ and $\mathcal{Z}$ that satisfies
$A|_{H_1} \in \bb_n$ for every $A \in \mathcal{A}_n$ and
every $n  \in \N$.
Then we have 
\begin{equation}\label{g302}
\dec(H,S,\mathcal{A}) \leq 
\dec(H_1,S,\bb),
\end{equation}
where, by definition,
$\dec(H_1,S,\bb)$ is the decay 
of the $n$-th minimal errors 
\begin{align*}
\err_n(H_1,S,\bb)
:=
\inf_{A \in \bb_n}
\{ \|S(f)-A(f)\|_{\mathcal{Z}} \colon
f\in H_1,\ \|f\|_{H_1}\leq 1\}
\end{align*}
for the corresponding univariate problem.

\subsection{Approximation with Unrestricted Linear Information}\label{s4.1}

We consider the abstract setting from Section \ref{s3.1}.
We are primarily interested in the case 
$\mathcal{X} = H$ and
\[
\mathcal{Z} := \bigotimes_{j\in\N} H_0,
\]
but for comparison we also consider $\mathcal{Z}$ together
with $\mathcal{X} = F$ or $\mathcal{X} = G$.
In all these cases the embedding operator $\App$, which
is well defined since $\alpha_{\nu,1} \geq 1\geq \gamma_j$ 
for $\nu,j \in \N$ yields embeddings
$G_j \hookrightarrow H_0$ of norm one for every $j \in \N$,
defines an infinite-dimensional approximation problem.

Let $\mathcal{X}^*$ denote the dual space of $\mathcal{X}$.
We are interested in approximating $\App$ 
using $n$ bounded linear functionals on $\mathcal{X}$, i.e., we
consider
\[
\aall_n := \left\{ \sum_{i=1}^n \lambda_i \cdot z_i
\colon \lambda_i \in \mathcal{X}^*,\ z_i \in \mathcal{Z} \right\}.
\]
Observe that in the standard setting we actually deal with 
$L_2$-approximation.

\begin{theo}\label{t10}
Consider the abstract setting, and assume that
\eqref{g5} is satisfied.
We have
\begin{align*}
\dec (H,\App,\aall)
& =\tfrac{1}{2} \cdot \min \left(
\decay \left((\alpha_{\nu,1}^{-1})_{\nu \in \N} \right),
\decay \left((\alpha_{1,j}^{-1})_{j \in \N} \right)
\right),\\
\dec (F,\App,\aall)
& =\tfrac{1}{2} \cdot
\decay \left((\alpha_{1,j}^{-1})_{j \in \N} \right),\\
\intertext{and}
\dec (G,\App,\aall)
& =
\tfrac{1}{2} \cdot \min \left(
\decay \left((\alpha_{\nu,1}^{-1})_{\nu \in \N} \right),
\decay \left((\gamma_j)_{j \in \N} \right)
\right).
\end{align*}
\end{theo}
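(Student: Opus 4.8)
The plan is to recognize $\err_n(\cdot,\App,\aall)$ as a sequence of singular values. For approximation with arbitrary continuous linear information it is classical (see, e.g., \cite{NovWoz08}) that the $n$-th minimal error equals the $(n+1)$-st largest singular value (approximation number) of the embedding $\App\colon\mathcal X\to\mathcal Z=\bigotimes_{j\in\N}H_0$; since discarding one term does not affect the decay, it suffices to compute $\decay$ of the nonincreasing rearrangement of the singular values of $\App$ for $\mathcal X\in\{H,F,G\}$. In each case $\App$ is the (incomplete) tensor product $\bigotimes_{j\in\N}S_j$ of the univariate embeddings $S_j$ into $H_0$, each of which fixes the direction $e_0$ with singular value $1$. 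By \eqref{g400} the remaining singular values of $S_j$ are $\alpha_{\nu,j}^{-1/2}$ ($\nu\in\N$) in the case of $H$; they are $\alpha_{1,j}^{-1/2}$ for $F$; and $\gamma_j^{1/2}\alpha_{\nu,1}^{-1/2}$ ($\nu\in\N$) for $G$. Because the tensor product is built on the unit vectors $e_0$, the elementary tensors of the univariate singular vectors form an orthonormal basis diagonalizing $\App^*\App$, so the singular values of $\App$ are exactly the products
\[
\sigma_\bnu=\prod_{j\in\N}\sigma_{\nu_j,j},\qquad\bnu\in\NN,
\]
with $\sigma_{0,j}=1$.

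The next step is to turn this multiset into a decay. The decay of the rearranged sequence equals $\sup\{\tau>0:\sum_{\bnu\in\NN}\sigma_\bnu^{1/\tau}<\infty\}$ (and is $0$ exactly when infinitely many $\sigma_\bnu$ stay bounded away from $0$, i.e.\ when $\App$ is not compact, in which case both the approximation numbers and this supremum vanish). The key is that the sum over $\NN$ factorizes into a product over $j$ of univariate sums, the index $\nu_j=0$ contributing the summand $1$. For $F$ this gives $\prod_j\bigl(1+\alpha_{1,j}^{-1/(2\tau)}\bigr)$, which converges if and only if $\sum_j\alpha_{1,j}^{-1/(2\tau)}<\infty$; substituting $s=2\tau$ yields $\dec(F,\App,\aall)=\tfrac12\decay((\alpha_{1,j}^{-1})_j)$. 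For $G$ the product is $\prod_j\bigl(1+\gamma_j^{1/(2\tau)}\sum_{\nu\in\N}\alpha_{\nu,1}^{-1/(2\tau)}\bigr)$, whose convergence is equivalent to the simultaneous finiteness of $\sum_\nu\alpha_{\nu,1}^{-1/(2\tau)}$ and $\sum_j\gamma_j^{1/(2\tau)}$; as these two conditions decouple, the admissible exponents are controlled by the smaller of the two univariate decays, giving $\dec(G,\App,\aall)=\tfrac12\min\bigl(\decay((\alpha_{\nu,1}^{-1})_\nu),\decay((\gamma_j)_j)\bigr)$.

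For $H$ the same factorization shows that $\sum_{\bnu}\sigma_\bnu^{1/\tau}<\infty$ is equivalent to $\sum_{\nu,j\in\N}\alpha_{\nu,j}^{-1/(2\tau)}<\infty$, and hence
\[
\dec(H,\App,\aall)=\tfrac12\,\decay\bigl((\alpha_{\nu,j}^{-1})_{\nu,j\in\N}\bigr).
\]
The heart of the argument, and the step I expect to be the main obstacle, is the identity
\[
\decay\bigl((\alpha_{\nu,j}^{-1})_{\nu,j}\bigr)=\min\bigl(\decay((\alpha_{\nu,1}^{-1})_\nu),\,\decay((\alpha_{1,j}^{-1})_j)\bigr),
\]
since, unlike for $G$, the weights $\alpha_{\nu,j}$ need not factor into a function of $\nu$ times a function of $j$. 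The inequality ``$\le$'' is elementary: if $\sum_{\nu,j}\alpha_{\nu,j}^{-1/s}<\infty$ for some $s>0$, then the sub-sums over the column $j=1$ and over the row $\nu=1$ are finite, so $s$ does not exceed either univariate decay. For the reverse inequality I would fix $s$ strictly below the minimum of the two univariate decays and apply Lemma~\ref{l4} to the rescaled family $(\alpha_{\nu,j}^{1/s})_{\nu,j}$, which still satisfies \eqref{g6b}, \eqref{g6a}, and \eqref{g5} (the last merely scaled by the positive factor $1/s$); the assumed summability of $(\alpha_{\nu,1}^{-1/s})_\nu$ and $(\alpha_{1,j}^{-1/s})_j$ then forces $\sum_{\nu,j}\alpha_{\nu,j}^{-1/s}<\infty$, so $s$ is admissible for the double decay as well. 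Letting $s$ increase to the minimum completes the identity, and with it the formula for $H$.
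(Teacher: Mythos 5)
Your proposal is correct and takes essentially the same route as the paper's proof: both identify $\err_n(\cdot,\App,\aall)$ with the ordered singular values of the embedding, use the tensor-product structure to write these as products $\prod_j \sigma_{\nu_j,j}$, factor $\sum_{\bnu}\sigma_{\bnu}^{1/\tau}$ into univariate sums as in Lemma~\ref{l1}, and settle the case of $H$ via Lemma~\ref{l4} (your rescaling of the family $(\alpha_{\nu,j}^{1/s})$ is just Lemma~\ref{l4} applied with exponent $1/s$, which the lemma already permits directly), while your decoupling argument for $G$ is precisely what the paper invokes through Remark~\ref{r70}. The only substantive difference is cosmetic: you handle the non-compact case ($\dec=0$) explicitly, which the paper leaves implicit.
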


\begin{proof}
First we consider $\dec (H,\App,\aall)$.
The singular values of $\App$ on $H$ are given by 
$\alpha_\bnu^{-1/2}$
with $\bnu \in \NN$, where
\[
\alpha_\bnu := \prod_{j\in \N} \alpha_{\nu_j,j},
\]
see \eqref{g400}.
Let $\xi := (\xi_i)_{i \in \N}$ denote the sequence of these
singular values, arranged in non-increasing order.
Due to a general result for linear problems on Hilbert spaces
\[
\err_n(H,\App,\aall) = \xi_{n+1},
\]
see \cite[Thm.~5.3.2]{TrWaWo88}.
Hence Lemma \ref{l1} and Lemma \ref{l4} yield
\begin{align*}
\dec(H,\App,\aall)
&=
\decay(\xi)\\
&= 
\sup \Bigl\{ \tau > 0 : \sum_{\bnu \in \NN}
\alpha_{\bnu}^{-1/(2\tau)} < \infty \Bigr\}\\
&= \tfrac{1}{2} \cdot \min \left(
\decay \left((\alpha_{\nu,1}^{-1})_{\nu \in \N} \right),
\decay \left((\alpha_{1,j}^{-1})_{j \in \N} \right)
\right).
\end{align*}

The results for $\dec(G,\App,\aall)$ and
$\dec(F,\App,\aall)$ are
established in the same way.
For the spaces $G$ we only have to observe that the singular
values of $\App$ are given by 
$\prod_{j\in\N} (\alpha_{\nu_j,1}/\gamma_j)^{-1/2}$
for $\bnu \in \NN$ and to use Remark \ref{r70} instead of Lemma
\ref{l4}.
The singular
values of $\App$ on $F$ are given by 
$\prod_{j\in\N} (\alpha_{\nu_j,j})^{-1/2}$
for $\bnu \in \NN\cap \{0,1\}^\N$, 
which immediately yields the claim.
\end{proof}

\begin{cor}\label{c1}
Consider the abstract setting.
For the Fourier weights according to Example \ref{ex1} we have
\begin{align}\label{eqqwert1}
\dec (H,\App,\aall)
=
\tfrac{1}{2} \cdot \min (r_1,
\rho \cdot \ln (a_1)),
\end{align}
and for the Fourier weights according to Example \ref{ex2a} we have
\begin{align}\label{eqqwert2}
\dec (H,\App,\aall)
=
\tfrac{1}{2} \cdot \rho \cdot \ln (a).
\end{align}
\end{cor}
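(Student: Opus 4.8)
The plan is to apply Theorem~\ref{t10} directly, since Corollary~\ref{c1} is simply the specialization of the first formula in that theorem to the two concrete families of Fourier weights introduced in Examples~\ref{ex1} and~\ref{ex2a}. The main work is therefore not a new argument but the computation of the two relevant decay quantities $\decay((\alpha_{\nu,1}^{-1})_{\nu \in \N})$ and $\decay((\alpha_{1,j}^{-1})_{j \in \N})$ for each family, together with checking that the hypothesis \eqref{g5} of Theorem~\ref{t10} is satisfied. First I would verify that both examples satisfy \eqref{g5}: in Example~\ref{ex1} this is stated to be equivalent to $\rho > 0$, and in Example~\ref{ex2a} it follows from $\rho > 0$; since $\rho$ appears in the asserted formulas and is implicitly assumed positive for the statement to be meaningful, \eqref{g5} holds and Theorem~\ref{t10} applies.

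Next I would read off the two decays, both of which are already recorded inside the respective examples. For Example~\ref{ex1} we have $\decay((\alpha_{\nu,1}^{-1})_{\nu \in \N}) = r_1$ and $\decay((\alpha_{1,j}^{-1})_{j \in \N}) = \rho \cdot \ln(a_1)$. Substituting these into
\begin{align*}
\dec (H,\App,\aall)
= \tfrac{1}{2} \cdot \min \left(
\decay \left((\alpha_{\nu,1}^{-1})_{\nu \in \N} \right),
\decay \left((\alpha_{1,j}^{-1})_{j \in \N} \right)
\right)
\end{align*}
immediately yields $\dec(H,\App,\aall) = \tfrac{1}{2} \min(r_1, \rho\ln(a_1))$, which is \eqref{eqqwert1}. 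For Example~\ref{ex2a} the first decay is $\infty$ and the second is $\rho \cdot \ln(a)$, so the minimum collapses to $\rho\ln(a)$, giving \eqref{eqqwert2}.

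The only step that requires genuine verification rather than quotation is the computation of these two univariate decays, but in both cases this reduces to the characterization $\decay(x) = \liminf_{i\to\infty} \ln(x_i^{-1})/\ln(i)$ recalled before Example~\ref{ex1}. For the $\nu$-decay in Example~\ref{ex1} one uses $\alpha_{\nu,1} = a_\nu^{r_1}$ together with $a_\nu \asymp \nu$ from \eqref{g60}, so that $\ln(\alpha_{\nu,1})/\ln(\nu) \to r_1$; for the $j$-decay one uses $\alpha_{1,j} = a_1^{r_j}$, whence $\ln(\alpha_{1,j})/\ln(j) = r_j \ln(a_1)/\ln(j)$, whose liminf is exactly $\rho\ln(a_1)$ by the definition of $\rho$. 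The analogous computations for Example~\ref{ex2a}, using $\alpha_{\nu,1} = a^{r_1\nu^{b_1}}$ (which grows faster than any power of $\nu$, forcing the decay to be $\infty$) and $\alpha_{1,j} = a^{r_j}$, are entirely parallel. I do not anticipate a substantive obstacle here: the hard analytic content is entirely contained in Theorem~\ref{t10} (and the singular-value and summability lemmas it invokes), and the corollary is a routine substitution once the decays are identified.
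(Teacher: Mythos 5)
Your argument for the case $\rho > 0$ is exactly the paper's: check \eqref{g5}, read off the two decays from Examples~\ref{ex1} and~\ref{ex2a}, and substitute into the first formula of Theorem~\ref{t10}. The decay computations you sketch are correct and are indeed recorded in the examples.

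However, there is a genuine gap: you dismiss the case $\rho = 0$ by claiming that $\rho$ is ``implicitly assumed positive for the statement to be meaningful.'' That premise is false. Neither Example~\ref{ex1} nor Example~\ref{ex2a} assumes $\rho > 0$; the conditions there (e.g.\ $0 < r_1 \leq r_j$) permit, say, constant exponents $r_j = r_1$, which give $\rho = 0$, and the corollary then asserts the perfectly meaningful (and true) statement $\dec(H,\App,\aall) = 0$. In this case your route breaks down, because \eqref{g5} is \emph{equivalent} to $\rho > 0$ in Example~\ref{ex1} (and fails for constant parameters in Example~\ref{ex2a} as well), so Theorem~\ref{t10} is simply not applicable. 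The paper handles this case by a separate, direct argument: if $\rho = 0$ then $\decay\left((\alpha_{1,j}^{-1})_{j \in \N}\right) = 0$, and since the singular values of $\App$ on $H$ include the subsequence $(\alpha_{1,j}^{-1/2})_{j\in\N}$ (take $\bnu$ with a single entry $\nu_j = 1$), the summability defining the decay of the full singular value sequence fails for every $\tau>0$, forcing $\dec(H,\App,\aall) = 0$, which matches $\tfrac12 \min(r_1, 0)$ and $\tfrac12 \cdot 0$ in \eqref{eqqwert1} and \eqref{eqqwert2}. Your proof needs this case distinction (or an equivalent argument) to be complete.
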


\begin{proof}
Recall that 
\begin{equation}\label{g310}
\decay((\alpha_{\nu,1}^{-1})_{\nu \in \N}) = r_1
\end{equation}
and
\begin{equation}\label{g311}
\decay((\alpha_{1,j}^{-1})_{j \in \N}) = 
\decay((\gamma_{j})_{j \in \N}) = \rho \cdot \ln (a_1)
\end{equation}
for the first kind of Fourier weights,
and with $r_1= \infty$ and $a_1 = a$
we have the same decays 
for the second kind of Fourier weights,
see Examples \ref{ex1}, \ref{ex2a}, and \ref{exmp1}.
If $\rho > 0$, then \eqref{g5} is satisfied for both types
of Fourier weights,
and both of the claims follow from Theorem \ref{t10}.
If $\rho=0$ we get both of the claims from the fact that 
$\decay \left((\alpha_{1,j}^{-1})_{j \in \N} \right) = 0$.
\end{proof}

\begin{rem}
Let us also consider the finite-dimensional approximation
problem that is given by the embedding operator
\begin{align*}
\App\colon
H^{(d)}
:=\bigotimes_{j=1}^d H_j
\hookrightarrow
\bigotimes_{j=1}^d H_0.
\end{align*}

Determining $\dec (H,\App,\aall)$
or
determining the rate of strong polynomial tractability of
$\err_n(H^{(d)},\App,\aall)$
are equivalent problems.
More precisely,
\begin{align*}
\err_n(H,\App,\aall)
=\sup_{d \in \N}
\err_n(H^{(d)},\App,\aall),
\end{align*}
and hence
\begin{align}\label{eqwert}
\dec (H,\App,\aall) = 
\decay \left( 
\left(\sup_{d \in \N}
\err_n(H^{(d)},\App,\aall)\right)_{n \in \N} \right).
\end{align}
The quantity on the right hand side of \eqref{eqwert} is called 
the rate of strong polynomial tractability, and its reciprocal is 
called the exponent of strong polynomial tractability.
In this sense \eqref{eqqwert1} is due to
\cite[Thm.~1]{PapWoz10}, who study Korobov spaces and the case \eqref{g1},
where $a_1 = 2\pi$,
and \eqref{eqqwert2} is due to 
\cite[Thm.~5.2]{KriPiWo14}, who derive this result
for Korobov spaces
under the additional assumption of convergence 
of $(r_j / \ln(j))_{j \in \N}$; furthermore $\rho>0$ is established as a 
sufficient condition for strong polynomial
tractability. 

Our version of this result shows that $\rho>0$ is
also a necessary condition and thus settles an open problem from
\cite{KriPiWo14}.
\end{rem}

\begin{rem}\label{Rem_Dung_Griebel}
We consider a particular case of the setting
from \cite{DunGri16}, namely $m=1$ and $\beta=0$
in their notation. This means that the domain
is of the form $E := D_1 \times D \times D \times \dots$
with closed intervals $D_1,D \subseteq \R$
and that the space $H_1$ may be defined
in terms of an orthonormal basis that is different from the basis
used to defined the other spaces $H_j$ with $j \geq 2$.
Furthermore, $\App$ maps $H$ into the $L_2$-space with
respect to a product probability measure of the form
$\mu_1 \times \mu_0 \times \mu_0 \times \dots$. Our results extend
to this setting in a straight-forward way.

In \cite{DunGri16} the Fourier weights from Example \ref{ex1}
with \eqref{g2} are considered. It is shown that
\begin{equation}\label{g52}
\dec (H,\App,\aall) = \tfrac{1}{2} \cdot r_1
\end{equation}
holds, if the requirement
\begin{equation}\label{g51}
\sum_{j \in \N}\frac{1}{r_j} \cdot \eta^{r_j} < \infty
\end{equation}
for $\eta = (2/3)^{1/r_1}$ is satisfied,
see \cite[Thm.~4.1]{DunGri16}.
Observe that 
\[
\rho \cdot \ln (3/2) >  r_1
\]
is a sufficient condition for \eqref{g51} to hold, and a necessary
condition for \eqref{g51} also permits equality, cf. Lemma~\ref{l5} 
of Appendix B.

Notice that \eqref{g2} implies $a_1=2$. 
Thus our result \eqref{eqqwert1} improves on the findings in 
\cite{DunGri16}
as it shows that the weaker condition
\[
\rho \cdot \ln (2)  \geq r_1
\]
is necessary and sufficient for \eqref{g52} to hold.
Nevertheless, we stress that in
\cite{DunGri16} explicit error bounds are derived, while our Theorem 
\ref{t10}
only determines the decay of the minimal errors. 
\end{rem}

\subsection{Approximation and Integration
with Standard Information}\label{sec42}

Now, we investigate the approximation and the integration problem 
in the standard setting.

In the sequel $\mathcal{X}$ typically
will be one of the tensor product spaces 
$F$, $H$, $G$, $F^c$, or $G^c$. For approximation we have
\[
\mathcal{Z} := \bigotimes_{j \in \N} L_2(D,\mu_0) = L_2(D^\N,\mu)
\]
with the respective embedding $S = \App$. For integration we have
$\mathcal{Z} = \K$, and $S= \Int$ is given by
\[
\Int(f) = \int_{D^\N} f \, d \mu
\]
for $f \in \mathcal{X}$.

Again, we are primarily interested in the case
$\mathcal{X} = H$, but in our analysis it is crucial to
consider $\mathcal{X} = F^c$ and $\mathcal{X} = G^c$ with
suitably chosen $c>0$ as well. The latter enables us to 
apply general results from \cite{PW11,Was12} for weighted tensor product
spaces based on anchored kernels. For comparison we
also consider $\mathcal{X} = F$ and $\mathcal{X} = G$ as before. 

Assume that $\mathcal{X}$ is a reproducing kernel Hilbert space of
functions on the domain $D^\N$, so that 
$\delta_{\by} (f) := f(\by)$ defines a bounded linear 
functional on $\mathcal{X}$ for every $\by \in D^\N$.
We consider a class $\astd_n$ of bounded linear operators
that is much smaller than $\aall_n$.
First of all,
$A \in \astd_n$ is only based on function
evaluations $\delta_{\by}$ instead of arbitrary bounded linear
functionals $\lambda \in \mathcal{X}^*$. Furthermore, we
do not permit evaluations at any point $\by \in D^\N$ and 
also do not just take into account the total number of function
evaluations that is used by $A$.
Instead, we employ the unrestricted subspace sampling
model, which has been introduced in \cite{KuoEtAl10}. 
This model is based on a non-decreasing cost function 
$\$\colon \N_0 \cup \{\infty\} \to [1,\infty]$ and some 
nominal value $a\in D$ in the following way.
For $\by = (y_j)_{j \in \N} \in D^\N$ the number of active variables is
given by
\begin{align}\label{yaqwsx1}
\Act(\by)
:=\#\{j\in\N\colon y_j\neq a\},
\end{align}
and
\[
\astd_n := 
\left\{ \sum_{i=1}^m \delta_{\by_i} \cdot z_i \colon 
m\in\N_0,\,
\by_i\in D^\N,\, 
\sum_{i=1}^m \$(\Act(\by_i))\leq n,\,
z_i \in \mathcal{Z} \right\}
\]
is the class of algorithms with the cost bounded by $n$. 
For the univariate approximation and integration problems, where
$\mathcal{Z} := L_2(D,\mu_0)$ or $\mathcal{Z} := \K$ and $\Int(f)
:= \int_D f \, d \mu_0$, respectively,
we simply take as sets $\bstd_n$ of bounded linear 
operators between $H_1$ and $\mathcal{Z}$
\[
\bstd_n := 
\left\{ \sum_{i=1}^n \delta_{y_i} \cdot z_i \colon 
y_i\in D,\, z_i \in \mathcal{Z} \right\}.
\]

In the sequel, we assume that 
\begin{align}\label{yaqwsx2}
\$(n) = \Omega(n)\text{ and }
\$(n)= O(e^{\zeta n})\text{ for some }\zeta\in (0,\infty).
\end{align}

\begin{theo}\label{thmapp}
Consider the standard setting, and assume that $G$ is a reproducing
kernel Hilbert space. 
For $S=\App$ and $S=\Int$ we have
\begin{equation}\label{eq1}
\dec (F,S,\astd) =
\tfrac{1}{2} \cdot \left(\decay((\alpha_{1,j}^{-1})_{j\in\N})-1\right)
\end{equation}
and
\begin{equation}\label{eq2}
\dec(G,S,\astd)
= 
\min\left(
\dec(H_1,S,\bstd),\tfrac{1}{2}
\cdot \left(\decay((\gamma_{j})_{j\in\N})-1\right)\right),
\end{equation}
implying
\begin{align}\label{eq3}
&\min\left(
\dec(H_1,S,\bstd),\tfrac{1}{2}
\cdot \left(\decay((\gamma_{j})_{j\in\N})-1\right)\right)\\
& \qquad \le \dec(H,S,\astd) \notag \\
& \qquad \le \min \left( \dec(H_1, S,\bstd),  
\tfrac{1}{2} \cdot \left( \decay((\alpha_{1,j}^{-1})_{j\in\N}) -1 \right) 
\right).
\notag
\end{align}
\end{theo}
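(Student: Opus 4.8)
The plan is to reduce everything to the known theory of weighted tensor products based on anchored kernels, for which the spaces $F^c$ and $G^c$ are tailor-made, and then to transport the resulting decay identities first to $F$ and $G$ and finally to $H$ through the embedding diagram of Theorem~\ref{t1}. So I would begin by establishing \eqref{eq1} and \eqref{eq2} for the anchored spaces $F^c$ and $G^c$, and only afterwards deal with $F$, $G$, and $H$.

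By Lemmas~\ref{lw1}, \ref{emb1dimg}, \ref{lem1234}, and~\ref{emb1dimf}, for every $c>0$ the spaces $G^c=\bigotimes_{j\in\N}(1+c\gamma_j\cdot m)$ and $F^c=\bigotimes_{j\in\N}(1+c\alpha_{1,j}^{-1}\cdot\ell)$ are weighted tensor products whose univariate building blocks carry the \emph{anchored} kernels $m$ and $\ell$. Together with the summability properties $\sum_{j}\gamma_j<\infty$ and $\sum_{j}\alpha_{1,j}^{-1}<\infty$ from \eqref{sumfinite} and the cost assumption \eqref{yaqwsx2}, this places us precisely in the setting of \cite{PW11,Was12} (and \cite{Gne12a}), where for both $S=\App$ and $S=\Int$ the decay of the minimal errors equals the minimum of the decay of the univariate minimal errors for the base space and $\tfrac12(\decay(\text{weights})-1)$. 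For $G^c$ the univariate base space is $G_1^c=H(1+cm)$, which by Lemma~\ref{lw1} carries a norm equivalent to $\|\cdot\|_{H_1}$, so its minimal errors have decay $\dec(H_1,S,\bstd)$, while the weight sequence $(c\gamma_j)_{j}$ has decay $\decay((\gamma_j)_{j\in\N})$. For $F^c$ the base space $F_1=\spann\{e_0,e_1\}$ is two-dimensional, so two suitably placed function evaluations recover $f$ exactly and the univariate decay is $+\infty$, while the weight sequence $(c\alpha_{1,j}^{-1})_{j}$ has decay $\decay((\alpha_{1,j}^{-1})_{j\in\N})$. This gives
\begin{align*}
\dec(G^c,S,\astd) &= \min\left(\dec(H_1,S,\bstd),\tfrac12\bigl(\decay((\gamma_j)_{j\in\N})-1\bigr)\right),\\
\dec(F^c,S,\astd) &= \tfrac12\bigl(\decay((\alpha_{1,j}^{-1})_{j\in\N})-1\bigr),
\end{align*}
and both right-hand sides are manifestly independent of $c$, since multiplying a positive sequence by a constant does not change its decay.

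To pass to $F$ and $G$ I would use that decay is monotone under continuous embeddings (a smaller space has a larger decay, as exploited in \eqref{g300}--\eqref{g301}). From Lemma~\ref{emb1dimg} we have $G^{c_0}\hookrightarrow G\hookrightarrow G^{c_0^{-1}}$, hence $\dec(G^{c_0},S,\astd)\ge\dec(G,S,\astd)\ge\dec(G^{c_0^{-1}},S,\astd)$; the $c$-independence just established forces the outer terms to agree and squeezes $\dec(G,S,\astd)$ to their common value, which is \eqref{eq2}. The identical argument with Lemma~\ref{emb1dimf} yields \eqref{eq1}. Finally I would derive \eqref{eq3} from the chain $F\hookrightarrow H\hookrightarrow G$ of Theorem~\ref{t0}: the monotonicity \eqref{g300} gives $\dec(G,S,\astd)\le\dec(H,S,\astd)\le\dec(F,S,\astd)$, which supplies the lower bound of \eqref{eq3} and the term $\tfrac12(\decay((\alpha_{1,j}^{-1})_{j\in\N})-1)$ in the upper bound, while the univariate comparison \eqref{g302} contributes $\dec(H,S,\astd)\le\dec(H_1,S,\bstd)$; taking the minimum of the two upper bounds completes the claim.

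The hard part will be the careful matching of the present anchored weighted tensor products with the precise hypotheses and conclusions of \cite{PW11,Was12}: one must verify that the univariate quantity entering their decay formula is genuinely $\dec(H_1,S,\bstd)$ for $G^c$—which rests on the norm equivalence of Lemma~\ref{lw1}, so that neither the constant $c$ nor the anchoring alters the univariate decay—and that it is infinite for the finite-dimensional base $F_1$ of $F^c$. One must also confirm that \eqref{yaqwsx2} is exactly the cost regime covered by those results and that \eqref{sumfinite} guarantees that $F^c$ and $G^c$ are the reproducing kernel Hilbert spaces to which the theory applies. Once these points are settled, the remainder is a purely formal squeeze through the embedding diagram, with all monotonicity and $c$-invariance arguments being elementary.
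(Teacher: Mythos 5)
Your proposal follows essentially the same route as the paper's proof: establish the decay formulas for the anchored spaces $G^c$ and $F^c$ via the weighted tensor product results of \cite{PW11,Was12}, observe that the right-hand sides are independent of $c$, squeeze through the embeddings $G^{c_0}\hookrightarrow G\hookrightarrow G^{c_0^{-1}}$ (resp.\ $F^{c_0}\hookrightarrow F\hookrightarrow F^{c_0^{-1}}$) to obtain \eqref{eq2} and \eqref{eq1}, and then combine \eqref{g300} with \eqref{g302} to get \eqref{eq3}. Your treatment of \eqref{eq3} is, if anything, slightly more explicit than the paper's, which cites only \eqref{g300} even though the term $\dec(H_1,S,\bstd)$ in the upper bound requires \eqref{g302}; your identification of the univariate ingredients (norm equivalence of $G_1^c$ with $H_1$ via Lemma~\ref{lw1}, exact recovery on the two-dimensional space $F_1$) also matches the paper.

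The one concrete point you leave unresolved is the boundary case $\decay((\gamma_j)_{j\in\N})=1$, and likewise $\decay((\alpha_{1,j}^{-1})_{j\in\N})=1$. The summability \eqref{sumfinite} only guarantees that these decays are $\geq 1$, whereas the results of \cite{PW11,Was12} you invoke require the decay of the weights to be \emph{strictly} larger than $1$; so your assertion that summability ``places us precisely in the setting'' of those works fails exactly at this boundary, which is the hard-part caveat you flag but do not settle. The paper closes this case by a comparison argument: replace the weights by smaller ones whose decay $\delta$ is strictly larger than $1$; smaller weights shrink the unit ball and hence the minimal errors, so $\dec(G^c,S,\astd)\le \min\bigl(\dec(H_1,S,\bstd),\tfrac12(\delta-1)\bigr)$, and letting $\delta$ tend to $1$ forces $\dec(G^c,S,\astd)=0$, which is exactly what the claimed formula gives when the decay of the weights equals $1$. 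With this (easy but necessary) addition, your outline coincides with the paper's proof.
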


\begin{proof}
At first, we derive \eqref{eq2}. For every $c>0$ the reproducing
kernel of $G^c$ is a weighted tensor product that is based on an
anchored kernel, see Lemma~\ref{emb1dimg}. We claim that
\[
\dec(G^c,S,\astd) =
\min\left(
\dec(H_1,S,\bstd),\tfrac{1}{2}
\cdot \left(\decay((\gamma_{j})_{j\in\N})-1\right)\right).
\]
In fact, if $\decay((\gamma_{j})_{j\in\N})>1$, then 
\cite[Cor.~9]{Was12} yields this claim for $S=\App$, while we
employ \cite[Thm.~2 and Sec.~3.3]{PW11} for $S=\Int$.
Otherwise we have $\decay((\gamma_{j})_{j\in\N})=1$, see 
\eqref{sumfinite}, and this case 
may be easily reduced to the 
previous one. Indeed, this can be done by making the weights smaller 
such that their decay $\delta$ is strictly larger than one. Making 
the weights smaller leads to smaller $n$-th minimal errors and thus
to a larger decay of the minimal errors. Using the claim for the 
case $\decay((\gamma_{j})_{j\in\N})>1$ and letting $\delta$ tend 
to one establishes our claim $\dec(G^c, S, \astd)=0$ in the case
$\decay((\gamma_{j})_{j\in\N})=1$.

With the help of our claim and the embedding result from
Lemma~\ref{emb1dimg}, we obtain \eqref{eq2}.

The proof of \eqref{eq1} is similar.
Here we only have to observe that
$e_0=1$ and $e_1(x)\neq e_1(y)$ for some $x,y\in D$, which
yields $\err_2(F_1,S,\bstd) = 0$, and to apply Lemma \ref{emb1dimf}
instead of Lemma \ref{emb1dimg}.

Finally, \eqref{eq3} follows from
\eqref{eq1} and \eqref{eq2} together with \eqref{g300}. 
\end{proof}

\begin{rem}
In the proof of Theorem \ref{thmapp} we rely on results from
\cite{PW11, Was12} that were actually proved under slightly 
stronger assumptions than the ones we make in the theorem. 
It is assumed in  \cite{PW11, Was12} that
$D$ is a Borel measurable subset of $\R$ and
that the probability measure $\mu_0$ 
has a Lebesgue density.
The proofs are applicable, however, in the setting of the present
paper, cf.\ \cite{DG14,GMR14}.
\end{rem}

We apply Theorem \ref{thmapp} to the trigonometric basis
and to the Haar basis.
For the univariate problem on the corresponding space $H_1$
the asymptotic behavior of the $n$-th minimal errors
$\err_n(H_1,S,\bstd)$ is known for $S=\App$ and $S = \Int$
in the case of the trigonometric basis.
In the case of the Haar basis we are only aware of a lower bound
for $S = \Int$. A matching upper bound for $S=\App$ is established
in Appendix \ref{a3}.

\begin{cor}\label{c2}
Assume that $H_0 = L_2([0,1],\mu_0)$ for the uniform distribution
$\mu_0$.
Consider the trigonometric or the Haar basis
$(e_\nu)_{\nu \in \N_0}$ according to 
Example~\ref{ex11} or Example~\ref{eha}, respectively.
For the Fourier weights according to Example~\ref{ex1} we have
\[
\bigl(r_1 > 1 \wedge \rho \cdot \ln (a_1) > 1\bigr)
\Rightarrow \text{$H$ is a RKHS}
\Rightarrow
\bigl(r_1 > 1 \wedge \rho \cdot \ln (a_1) \geq 1\bigr),
\]
as well as
\[
\dec(H,S,\astd) = \tfrac{1}{2}\cdot \min(r_1,\rho\cdot \ln(a_1)-1)
\]
for $S=\App$ and $S = \Int$
if $H$ is a reproducing kernel Hilbert space.
\end{cor}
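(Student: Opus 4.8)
The plan is to read off both assertions from Example~\ref{ex1}, Example~\ref{exmp1}, and Theorem~\ref{thmapp}, reducing everything to the decay of two explicit sequences and to the univariate minimal errors.

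For the characterization of $H$ as a reproducing kernel Hilbert space, I would first invoke Example~\ref{ex11} (trigonometric basis) or Example~\ref{eha} (Haar basis), by which $H$ is a reproducing kernel Hilbert space if and only if $\sum_{\nu,j\in\N}\alpha_{\nu,j}^{-1}<\infty$. Specializing the summability statements of Example~\ref{ex1} to $\sigma=0$, the condition $r_1>1\wedge\rho\cdot\ln(a_1)>1$ is sufficient for this series to converge, while its convergence forces $r_1>1\wedge\rho\cdot\ln(a_1)\ge 1$. This is exactly the claimed chain of implications.

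For the decay formula I would proceed as follows. When $H$ is a reproducing kernel Hilbert space, the $\nu=1$ terms of the convergent series $\sum_{\nu,j}\alpha_{\nu,j}^{-1}$ already give $\sum_j a_1^{-r_j}<\infty$, hence $\sum_j\gamma_j<\infty$ by Example~\ref{exmp1}; together with $r_1>1$ this yields convergence of $\sum_{\nu,j}(\alpha_{\nu,1}/\gamma_j)^{-1}$, so that $G$ is a reproducing kernel Hilbert space and Theorem~\ref{thmapp} is applicable. By Example~\ref{exmp1} we have $\decay((\gamma_j)_{j\in\N})=\decay((\alpha_{1,j}^{-1})_{j\in\N})=\rho\cdot\ln(a_1)$, so the lower and upper bounds in \eqref{eq3} coincide and
\[
\dec(H,S,\astd)=\min\left(\dec(H_1,S,\bstd),\tfrac12(\rho\cdot\ln(a_1)-1)\right)
\]
for $S=\App$ and $S=\Int$.

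It then remains to show $\dec(H_1,S,\bstd)=\tfrac12\,r_1$ for both problems, which combined with the previous display gives the claimed value $\tfrac12\min(r_1,\rho\cdot\ln(a_1)-1)$. For the trigonometric basis this is the known univariate asymptotics referenced above. For the Haar basis I would assemble it from three ingredients. First, by \eqref{g400} the embedding $\App\colon H_1\hookrightarrow H_0$ has singular values $\alpha_{\nu,1}^{-1/2}\asymp\nu^{-r_1/2}$, so $\dec(H_1,\App,\aall)=\tfrac12 r_1$ by the singular value characterization of linear problems (cf.\ the proof of Theorem~\ref{t10}); since function evaluations are bounded linear functionals, $\bstd_n\subseteq\aall_n$ and hence $\dec(H_1,\App,\bstd)\le\tfrac12 r_1$. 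Second, the matching upper bound on $\err_n(H_1,\App,\bstd)$ from Appendix~\ref{a3} gives $\dec(H_1,\App,\bstd)\ge\tfrac12 r_1$, so $\dec(H_1,\App,\bstd)=\tfrac12 r_1$. Third, $|\Int(f)-\Int(Af)|\le\|\Int\|\cdot\|f-Af\|_{L_2}$ shows $\err_n(H_1,\Int,\bstd)\le C\,\err_n(H_1,\App,\bstd)$ and thus $\dec(H_1,\Int,\bstd)\ge\tfrac12 r_1$, which is matched from above by the known integration lower bound. The main obstacle is precisely this last point for the Haar basis, where no single reference yields $\dec(H_1,S,\bstd)$ directly and it must be pieced together from Appendix~\ref{a3}, the linear-information comparison via the singular values, and the integration-versus-approximation inequality.
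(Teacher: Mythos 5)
Your proposal is correct and follows essentially the same route as the paper's proof: the RKHS implications from Examples~\ref{ex11}/\ref{eha} and \ref{ex1} with $\sigma=0$, the decay formula from the sandwich \eqref{eq3} of Theorem~\ref{thmapp} combined with $\decay((\gamma_j)_{j\in\N})=\decay((\alpha_{1,j}^{-1})_{j\in\N})=\rho\cdot\ln(a_1)$, and the univariate rates $\dec(H_1,S,\bstd)=\tfrac12 r_1$ (for the Haar case the paper likewise uses Theorem~\ref{t95} for approximation and cites \cite{MR3330331} for the integration lower bound you assert as ``known''). Your direct verification that $H$ being a RKHS forces $\sum_j\gamma_j<\infty$ and $r_1>1$, hence that $G$ is a RKHS via factorization of $\sum_{\nu,j}\gamma_j\alpha_{\nu,1}^{-1}$, is a small but genuine simplification: it makes Theorem~\ref{thmapp} applicable uniformly, whereas the paper separately reduces the borderline case $\rho\cdot\ln(a_1)=1$ to the case $\rho\cdot\ln(a_1)>1$.
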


\begin{proof}
Examples~\ref{ex11}, \ref{eha}, and \ref{ex1} with $\sigma=0$
yield the necessary and the sufficient condition for
$H$ to be a reproducing kernel Hilbert space.
Recall that $G$ is based on the Fourier weights
$(\alpha_{\nu,1}/\gamma_j)_{\nu,j \in \N}$. 
We proceed as for the space $H$ to establish the same pair of 
conditions for $G$ to be a reproducing kernel Hilbert space.

In the sequel we therefore assume that $r_1 > 1$. Then we have
\begin{align}\label{deconedim}
\dec(H_1,S,\bstd) =\tfrac{1}{2} \cdot r_1.
\end{align}
The lower bound for $S=\App$ in the case of the
trigonometric basis follows from the well-known approximation
error estimates of Dirichlet or de la Vall\'ee-Poussin means,
see, e.g., \cite{Khr83, Tem93}. The case of the Haar basis is
studied in Theorem \ref{t95}.
The upper bound for $S=\Int$ in the the case of the
trigonometric basis follows from equally well-known 
constructions of fooling functions that are trigonometric 
polynomials, see, e.g., \cite{Tem90}.
The case of the Haar basis follows from \cite[Thm.~41]{MR3330331}.

If $\rho \cdot \ln (a_1) > 1$ is valid, then we apply \eqref{g311}
and \eqref{eq3} to determine $\dec(H,S,\astd)$ as claimed,
and the remaining case $\rho \cdot \ln (a_1) = 1$ is easily reduced
to the previous one. 
\end{proof}

In a similar way we may handle the Fourier weights according to
Example~\ref{ex2a} instead of Example~\ref{ex1}. 
Since this type of Fourier weights does never satisfy \eqref{condpower},
we only consider the trigonometric basis.

\begin{cor}\label{c3}
Assume that $H_0 = L_2([0,1],\mu_0)$ for the uniform distribution
$\mu_0$. Consider the trigonometric basis $(e_\nu)_{\nu \in \N_0}$ 
according to Example~\ref{ex11}.
For the Fourier weights according to Example~\ref{ex2a} we have
\[
\rho \cdot \ln (a) > 1
\Rightarrow \text{$H$ is a RKHS}
\Rightarrow
\rho \cdot \ln (a) \geq 1,
\]
as well as
\[
\dec(H,S, \astd) = \tfrac{1}{2}\cdot (\rho\cdot \ln(a)-1)
\]
for $S=\App$ and $S = \Int$
if $H$ is a reproducing kernel Hilbert space.
\end{cor}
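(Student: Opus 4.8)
The plan is to follow the proof of Corollary~\ref{c2} line by line, the two differences being that the relevant summability conditions are now those of Example~\ref{ex2a} and that the univariate decay will turn out to be infinite rather than equal to $\tfrac12 r_1$. For the characterization of the reproducing kernel property, recall from Example~\ref{ex11} that, for the trigonometric basis, $H$ is a reproducing kernel Hilbert space precisely when $\sum_{\nu,j\in\N}\alpha_{\nu,j}^{-1}<\infty$, i.e.\ when condition~\eqref{g201} holds with $\sigma=0$. By Example~\ref{ex2a} this is implied by $\rho\cdot\ln(a)>1$ and implies $\rho\cdot\ln(a)\ge 1$, which is the asserted chain of implications. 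Since $G$ is built from the Fourier weights $(\alpha_{\nu,1}/\gamma_j)_{\nu,j\in\N}$ and, by Remark~\ref{r70}, the conclusion of Lemma~\ref{l4} persists for this family, the identical argument---using $\decay((\alpha_{\nu,1}^{-1})_{\nu\in\N})=\infty$ and $\decay((\gamma_j)_{j\in\N})=\rho\cdot\ln(a)$ from Examples~\ref{ex2a} and~\ref{exmp1}---shows that $G$ is a reproducing kernel Hilbert space under the same conditions, so that Theorem~\ref{thmapp} becomes available whenever $\rho\cdot\ln(a)>1$.

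Next I would establish that the univariate decay is infinite, namely $\dec(H_1,S,\bstd)=\infty$ for both $S=\App$ and $S=\Int$. The point is that the Fourier weights of $H_1$ grow super-polynomially, so $\decay((\alpha_{\nu,1}^{-1})_{\nu\in\N})=\infty$ by Example~\ref{ex2a}. For $S=\App$ this forces super-polynomial convergence of a concrete sampling scheme: reconstructing $f$ from equispaced function values by Dirichlet or de~la~Vall\'ee-Poussin means (cf.\ \cite{Khr83,Tem93}) yields an $L_2$-error bounded by the tail $\bigl(\sum_{|\nu|>n}\alpha_{\nu,1}^{-1}\bigr)^{1/2}$, which decays faster than any power of $n$; hence $\dec(H_1,\App,\bstd)=\infty$. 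Every such sampling algorithm for approximation induces, by integrating its coefficient functions against $\mu_0$, an integration rule of the same cost whose error is bounded by the $L_2$-approximation error via the Cauchy--Schwarz inequality on the probability space $(D,\mu_0)$; therefore $\dec(H_1,\Int,\bstd)\ge\dec(H_1,\App,\bstd)=\infty$ as well. In contrast to Corollary~\ref{c2}, no fooling-function argument is needed here, since $\infty$ is the largest possible value.

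Finally I would feed these two inputs into~\eqref{eq3}. Example~\ref{exmp1} gives $\decay((\gamma_j)_{j\in\N})=\decay((\alpha_{1,j}^{-1})_{j\in\N})=\rho\cdot\ln(a)$, so once $\rho\cdot\ln(a)>1$ both the lower and the upper bound in~\eqref{eq3} collapse to $\min\bigl(\infty,\tfrac12(\rho\cdot\ln(a)-1)\bigr)=\tfrac12(\rho\cdot\ln(a)-1)$, which is the claimed value. The remaining boundary case $\rho\cdot\ln(a)=1$, which may still occur while $H$ is a reproducing kernel Hilbert space, reduces to the previous one: the upper bound in~\eqref{eq3} originates from the space $F$, which is always a reproducing kernel Hilbert space, and forces $\dec(H,S,\astd)\le\tfrac12(\rho\cdot\ln(a)-1)=0$, while the decay is always nonnegative. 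The step I expect to be the main obstacle is the second one: one has to exhibit algorithms using function values only---the abstract singular-value bound underlying Theorem~\ref{t10} concerns arbitrary linear information and does not apply---and verify that the super-polynomial growth of the Fourier weights really does translate into super-polynomial convergence of a standard sampling scheme for the trigonometric system.
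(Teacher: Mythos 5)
Your proposal is correct and takes essentially the same route as the paper: the paper's own proof of Corollary~\ref{c3} is literally ``Proceed as in the proof of Corollary~\ref{c2}'', and your write-up is a faithful expansion of exactly that recipe --- the RKHS conditions from Examples~\ref{ex11} and~\ref{ex2a}, the analogous check for $G$, the univariate decay (now infinite, correctly obtained from the sampling operators alone, so that no fooling-function lower bound is needed), the collapse of both sides of \eqref{eq3} since $\decay((\gamma_j)_{j\in\N})=\decay((\alpha_{1,j}^{-1})_{j\in\N})=\rho\cdot\ln(a)$, and the boundary case via the embedding into $F$. The only slight imprecision is the phrase that $F$ is ``always'' a reproducing kernel Hilbert space --- this holds under the standing hypothesis that $H$ is one, since then $\sum_{j\in\N}\alpha_{1,j}^{-1}\leq\sum_{\nu,j\in\N}\alpha_{\nu,j}^{-1}<\infty$, which is all Lemma~\ref{emb1dimf} and \eqref{eq1} require.
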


\begin{proof}
Proceed as in the proof of Corollary \ref{c2}.
\end{proof}

\subsection{Concluding Remarks}

\begin{rem}\label{Rem_when_does...}
Consider the approximation or the integration problem in the
standard setting, and assume that $G$ is a reproducing kernel
Hilbert space.
For the trigonometric basis and the polynomial or (sub-)exponential
Fourier weights and for the Haar basis and the polynomial 
Fourier weights we obtain sharp results
via embeddings: It turns out that
\begin{align*}
\dec(G^{c_0^{-1}},S,\astd)
&=
\dec(G,S,\astd) =
\dec(H,S,\astd)\\
&=\min\left(\dec(H_1,S,\astd),\dec(F,S,\astd)\right)
\\
&=\min\left(\dec(H_1,S,\astd),\dec(F^{c_0},S,\astd)\right)
\end{align*}
for $S=\App$ and $S = \Int$,
see Corollaries~\ref{c2} and~\ref{c3} and their proofs. 

The analysis on the tensor product $H$ of spaces of increasing smoothness
is therefore reduced to the analysis on its first factor $H_1$
and on the weighted tensor product spaces $G^{c_0^{-1}}$ and
$F^{c_0}$, which are based on anchored kernels.
We stress that already the space $G$ is typically much larger than $H$,
while already the space $F$ is always much smaller than $H$.

A similar conclusion holds true for the approximation problem 
with unrestricted linear information in the
abstract setting.
For the polynomial and the {(sub-)}ex\-ponential Fourier weights 
\begin{align*}
\dec(G,\App,\aall)
&=\dec(H,\App,\aall)\\
&=\min\left(\dec(H_1,\App,\aall),\dec(F,\App,\aall)\right),
\end{align*}
see Theorem \ref{t10}, Corollary~\ref{c1} as well as \eqref{g310} and
\eqref{g311}.
\end{rem}

\begin{rem}\label{exaappstd}
Consider the approximation problem in the setting from  
Corollary \ref{c2}.
Combining the latter with Corollary \ref{c1} 
reveals that, at least with respect to the decay of the $n$-th
minimal errors, the class $\astd_n$ is as powerful as the class
$\aall_n$ if and only if $\rho\cdot \ln(a_1)\geq r_1+1$.
Furthermore, $\dec(H,\App,\astd)$ and $\dec(H,\App,\aall)$ differ
at most by $1/2$. 

In the setting from Corollary \ref{c3} we always have
\begin{align*}
\dec(H,\App,\astd)
=\dec (H,\App,\aall)-\tfrac{1}{2}.
\end{align*}
\end{rem}

\appendix

\section{Countable Tensor Products}\label{a1}

Let $(H_j,\scp{\cdot}{\cdot}_j)_{j \in \N}$ be a sequence of Hilbert 
spaces and fix, for each $j\in\N$, a
unit vector $u_j \in H_j$.
If it is clear from the context we sometimes omit to name the unit 
vectors $u_j \in H_j$ explicitly. In the 
setting of Section \ref{s3a} it is natural to choose 
$u_j=e_0$ for all $j\in\N$. Then the incomplete tensor product 
\[ 
 H := \bigotimes_{j\in \N} H_j
\]
is the completion of the linear span of elementary infinite tensors 
$ \otimes_{j \in \N}f_j$, for which only finitely many $f_j$ are different 
from $u_j$. Here the completion is taken with respect to the inner 
product given by
\[ 
 \scp{\otimes_{j\in \N} f_j}{\otimes_{j\in \N} g_j} := 
 \prod_{j \in \N} \scp{f_j}{g_j}_j
\]
for elementary infinite tensors and extended linearly to finite sums of 
elementary infinite tensors. The abstract completion can be replaced by 
a concrete description of elements in the incomplete tensor product via 
linear functionals, see \cite{Neu39}.

This notion of an infinite tensor product is the natural one for our 
purpose since the incomplete tensor product of spaces $L_2(D_j,\mu_j)$ 
is in a canonical way isometrically isomorphic
to $L_2(D, \mu)$, where 
$\mu:=\times_{j \in \N} \mu_j$ is the product measure of the probability 
measures $\mu_j$ on $ D := \times_{j\in \N} D_j$.

We freely used the following facts, which can be found in \cite{Neu39}. 
Each $H_{j_0}$ is isometrically embedded in $H$ by identifying 
$h_{j_0} \in H_{j_0}$ with the tensor $\otimes_{j \in \N} f_j$ with 
$f_{j_0} = h_{j_0}$ and $f_j=u_j$ for $j \neq j_0$. Similarly, the 
finite Hilbert space tensor products $\bigotimes_{j=1}^d H_j$ are 
isometrically embedded in $H$. 
If we have another incomplete tensor product 
\[ 
 G := \bigotimes_{j \in \N} G_j
\]
with unit vectors $v_j\in G_j$
and a sequence of bounded linear operators $T_j:H_j \to G_j$
with $T_j u_j = v_j$ such that 
\[
C:=\prod_{j \in \N} \|T_j\| < \infty,
\]
then there exists a unique 
linear bounded operator $T:H \to G$ acting on elementary tensors as
\[
 T \left( \otimes_{j \in \N} f_j \right) := 
 \otimes_{j \in \N} T_j f_j. 
\] 
Moreover, $\|T\|=C$.

\section{Summability and Decay of Sequences}\label{a2}

As in Section \ref{s2.2} we consider 
sets $N_j \subseteq \N_0$ such that $0 \in N_j$ for $j \in \N$, and we let
$\NN$ denote the set of all sequences $\bnu := (\nu_j)_{j \in
\N}$ in $\N_0$ such that $\nu_j \in N_j$ for every $j \in \N$
and $\sum_{j \in \N} \nu_j < \infty$.

\begin{lemma}\label{l1}
Let 
$\beta_\bnu := \prod_{j \in \N} \beta_{\nu_j,j}
$
for $\bnu \in \NN$ and $\beta_{\nu,j} \in \R$ for $j \in \N$ and
$\nu \in N_j$ with
$\beta_{0,j} = 1$ for every $j \in \N$. Then 
$\sum_{\bnu \in \NN} \beta_{\bnu}$ is absolutely convergent if and
only if 
\[
\sum_{j \in \N} \sum_{\nu \in N_j \setminus \{0\}}
|\beta_{\nu,j}| < \infty,
\]
in which case
\[
\sum_{\bnu \in \NN} \beta_{\bnu}
=
\prod_{j \in \N} \left(1 + \sum_{\nu \in N_j \setminus \{0\}}
\beta_{\nu,j} \right).
\]
\end{lemma}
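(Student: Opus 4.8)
The plan is to prove this by reducing the claim about the sum over the multi-index set $\NN$ to a statement about an infinite product, which is precisely the content one expects from the identity
\[
\prod_{j \in \N} \left(1 + \sum_{\nu \in N_j \setminus \{0\}} \beta_{\nu,j} \right),
\]
where the factor contributed by index $j$ is $1$ plus the ``nontrivial'' terms. The key combinatorial observation is that each $\bnu \in \NN$ has only finitely many nonzero entries (by definition of $\NN$), so expanding the finite partial products of the right-hand side generates exactly the terms $\beta_\bnu$ indexed by those $\bnu$ supported on the first finitely many coordinates. First I would fix $d \in \N$ and consider the finite product $\prod_{j=1}^d \bigl(1 + \sum_{\nu \in N_j \setminus \{0\}} |\beta_{\nu,j}|\bigr)$; distributing the product over the finite sums yields $\sum_{\bnu} \prod_{j=1}^d |\beta_{\nu_j,j}|$, where the sum runs over all $\bnu$ with support in $\{1,\dots,d\}$. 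Since $\beta_{0,j}=1$, this is exactly $\sum |\beta_\bnu|$ over such $\bnu$.

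For the forward direction, I would assume the double sum $\sum_{j \in \N} \sum_{\nu \in N_j \setminus \{0\}} |\beta_{\nu,j}|$ is finite. This is precisely the condition that the infinite product $\prod_{j \in \N}\bigl(1 + \sum_{\nu \in N_j \setminus \{0\}} |\beta_{\nu,j}|\bigr)$ converges, by the standard criterion relating convergence of $\prod(1+a_j)$ with nonnegative $a_j$ to summability of $\sum a_j$. The partial-product computation above then shows that the finite partial sums $\sum_{\operatorname{supp}\bnu \subseteq \{1,\dots,d\}} |\beta_\bnu|$ are bounded above by this finite product, uniformly in $d$. Since every $\bnu \in \NN$ has finite support, these partial sums exhaust $\sum_{\bnu \in \NN} |\beta_\bnu|$ in the limit, giving absolute convergence. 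Having established absolute convergence, I would then compute the actual value: the same distributive identity applied to the products without absolute values shows the partial sums converge to the infinite product, and absolute convergence justifies the rearrangement and passage to the limit.

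For the converse, I would assume $\sum_{\bnu \in \NN} |\beta_\bnu| < \infty$. Restricting to multi-indices supported on a single coordinate $j$ with one nonzero entry, namely $\bnu = (\nu) \cdot e_j$ for $\nu \in N_j \setminus \{0\}$, each such $\beta_\bnu$ equals $\beta_{\nu,j}$ because all other factors are $\beta_{0,\cdot}=1$. These are pairwise distinct elements of $\NN$, so $\sum_{j \in \N}\sum_{\nu \in N_j \setminus \{0\}} |\beta_{\nu,j}|$ is a subseries of the absolutely convergent series $\sum_{\bnu \in \NN}|\beta_\bnu|$ and hence finite, establishing the double-sum condition.

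The main obstacle I anticipate is purely bookkeeping: making the exhaustion argument rigorous, i.e., verifying that the family $\{\bnu \in \NN : \operatorname{supp}\bnu \subseteq \{1,\dots,d\}\}$ increases to all of $\NN$ as $d \to \infty$ and that summability over this increasing family controls the full unordered sum over $\NN$. This is routine for unordered (absolutely convergent) sums but should be stated carefully so that the interchange of the infinite product and the infinite sum is justified only after absolute convergence has been secured. No single step is genuinely hard; the care lies in keeping the two directions cleanly separated and invoking the product-series convergence criterion correctly.
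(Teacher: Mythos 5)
Your proposal is correct and follows essentially the same route as the paper's proof: both rest on the identity $\sum_{\bnu \in \NN_d} |\beta_\bnu| = \prod_{j=1}^d \bigl(1+\sum_{\nu \in N_j \setminus \{0\}} |\beta_{\nu,j}|\bigr)$ for multi-indices supported in $\{1,\dots,d\}$, followed by exhaustion of $\NN$ and the standard criterion linking convergence of $\prod(1+a_j)$ to $\sum a_j < \infty$. The paper merely packages the two directions at once by proving this identity as an equality in $[0,\infty]$, whereas you handle the converse by a subseries argument; the difference is cosmetic.
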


\begin{proof}
Without loss of generality we may assume 
that $N_j = \N_0$ for every $j \in \N$.
Let
\[
\NN_k := \{ \bnu \in \NN : \text{$\nu_j=0$ for $j > k$}\}
\]
for $k \in \N$.
It is easy to prove by induction that
\[
\sum_{\bnu\in\NN_k}|\beta_\bnu|=
\prod_{j=1}^k \left(1+\sum_{\nu \in \N}|\beta_{\nu,j}|\right)
\]
for every $k \in \N$. Therefore
\[
\sum_{\bnu\in\NN}|\beta_\bnu|
=
\prod_{j\in\N}\left(1+\sum_{\nu \in \N}|\beta_{\nu,j}|\right),
\]
so that $\sum_{\bnu\in\NN}|\beta_\bnu| < \infty$ and
$\sum_{\nu,j \in \N}|\beta_{\nu,j}| < \infty$ are equivalent.
Similarly, we get 
\[
\sum_{\bnu\in\NN}\beta_\bnu =
\prod_{j\in\N}\left(1+\sum_{\nu\in\N}\beta_{\nu,j}\right), 
\]
if $\sum_{\bnu\in\NN}|\beta_\bnu| < \infty$. 
\end{proof}

\begin{lemma}\label{l4}
Assume that
\eqref{g5} is satisfied, 
in addition to \eqref{g6b} and \eqref{g6a}.
For every $\tau > 0$ and every $\sigma \geq 0$ we have
\[
\sum_{\nu,j \in \N} \alpha_{\nu,j}^{-\tau} \cdot \nu^\sigma< \infty
\quad \Leftrightarrow \quad
\left(
\sum_{\nu \in \N} \alpha_{\nu,1}^{-\tau} \cdot \nu^\sigma < \infty
\ \wedge \ 
\sum_{j \in \N} \alpha_{1,j}^{-\tau} < \infty
\right).
\]
\end{lemma}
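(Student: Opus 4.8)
The plan is to treat the two implications separately, with all the substance lying in the reverse (``$\Leftarrow$'') direction. The forward implication (``$\Rightarrow$'') is immediate from nonnegativity of the summands: the series $\sum_{\nu\in\N}\alpha_{\nu,1}^{-\tau}\cdot\nu^\sigma$ is exactly the $j=1$ slice of the double sum, and $\sum_{j\in\N}\alpha_{1,j}^{-\tau}=\sum_{j\in\N}\alpha_{1,j}^{-\tau}\cdot 1^\sigma$ is its $\nu=1$ slice, so each is dominated by the convergent double sum.

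For the reverse direction I would fix $\tau>0$ and $\sigma\ge0$ and extract from the $\liminf$ in \eqref{g5} a constant $\kappa>0$ together with a threshold, so that
\[
\alpha_{\nu,j}\ge\nu^{\kappa\ln j}=j^{\kappa\ln\nu}\qquad\text{whenever }\nu\ge\nu_0\text{ and }j\ge j_0.
\]
The decisive point, which I would stress, is that \eqref{g5} is a $\liminf$ condition valid for \emph{all} sufficiently large $\nu,j$; hence $j_0$ may be taken as large as we please, and I would enlarge it so that $\kappa\tau\ln j_0>\sigma+1$. I then decompose $\N\times\N$ into the three disjoint regions $\{j<j_0\}$, $\{\nu<\nu_1,\ j\ge j_0\}$, and the bulk $\{\nu\ge\nu_1,\ j\ge j_0\}$, where $\nu_1\ge\nu_0$ is fixed below.

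On the first region, which consists of finitely many columns $j$, I would invoke \eqref{g6b} in the form $\alpha_{\nu,j}\ge\alpha_{\nu,1}$ to bound each inner sum by $\sum_{\nu}\alpha_{\nu,1}^{-\tau}\cdot\nu^\sigma<\infty$; on the second region, finitely many rows $\nu$, I would use \eqref{g6b} in the form $\alpha_{\nu,j}\ge\alpha_{1,j}$ to bound each inner sum by $\nu^\sigma\sum_j\alpha_{1,j}^{-\tau}<\infty$. Both contributions are then finite sums of convergent series, drawing on the two right-hand hypotheses respectively; note that \eqref{g6a} and \eqref{g6b} guarantee $\alpha_{\nu,j}\ge1$ throughout.

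The bulk is the crux and the one step where \eqref{g5} is genuinely needed. Here $\alpha_{\nu,j}^{-\tau}\le j^{-\kappa\tau\ln\nu}$, so with $p:=\kappa\tau\ln\nu$ and $\nu_1$ chosen so that $p>1+j_0$ for $\nu\ge\nu_1$, a comparison with $\int_{j_0-1}^\infty x^{-p}\,dx$ yields $\sum_{j\ge j_0}\alpha_{\nu,j}^{-\tau}\le C\,j_0^{-p}=C\,\nu^{-\kappa\tau\ln j_0}$ with $C$ independent of $\nu$. Summing against $\nu^\sigma$ leaves $C\sum_{\nu\ge\nu_1}\nu^{\sigma-\kappa\tau\ln j_0}$, which converges precisely because $\kappa\tau\ln j_0>\sigma+1$. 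The obstacle I would flag is \emph{why} \eqref{g5} cannot be dispensed with: using only \eqref{g6b} one can at best split $\alpha_{\nu,j}^{-\tau}\le\alpha_{\nu,1}^{-\tau/2}\cdot\alpha_{1,j}^{-\tau/2}$, which factorizes the double sum but demands summability at the halved exponent $\tau/2$ — strictly more than the hypotheses provide. The extra joint growth furnished by \eqref{g5}, amplified by the freedom to push $j_0$ upward, is exactly what closes this gap.
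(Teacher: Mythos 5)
Your proof is correct and follows essentially the same route as the paper's: both split $\N\times\N$ into finitely many columns and finitely many rows, handled via \eqref{g6b} together with the two assumed series, plus a bulk region where \eqref{g5}, with the threshold pushed far enough, gives $\alpha_{\nu,j}^{-\tau}\le \nu^{-\kappa\tau\ln j_0}$ and hence a convergent majorant; the paper merely normalizes to $\tau=1$ and absorbs $\nu^\sigma$ into modified weights $\beta_{\nu,j}=\alpha_{\nu,j}\cdot\nu^{-\sigma}$ (and uses one common cutoff $n$) instead of tuning $j_0$ against $\sigma+1$ as you do. One small repair in the bulk estimate: bound $\sum_{j\ge j_0}j^{-p}\le j_0^{-p}+\int_{j_0}^{\infty}x^{-p}\,dx\le 2j_0^{-p}$ (valid precisely because $p>1+j_0$), since the comparison you cite, $\int_{j_0-1}^{\infty}x^{-p}\,dx=(j_0-1)^{1-p}/(p-1)$, is not $O(j_0^{-p})$ uniformly in $p$ --- though it would still suffice after replacing $\ln j_0$ by $\ln(j_0-1)$ in your final exponent condition.
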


\begin{proof}
It suffices to verify the implication `$\Leftarrow$'
for $\tau=1$.
Accordingly, we assume that 
$\sum_{\nu \in \N} \alpha_{\nu,1}^{-1} \cdot \nu^\sigma < \infty$ and
$\sum_{j \in \N} \alpha_{1,j}^{-1} < \infty$, and we show
that
\[
\sum_{\nu,j \in \N} \alpha_{\nu,j}^{-1} \cdot \nu^\sigma < \infty.
\]
Put $\beta_{\nu,j} := \alpha_{\nu,j} \cdot \nu^{-\sigma}$.
For any choice of $n \in \N$ we have
\begin{align*}
\sum_{\nu,j \in \N} \beta_{\nu,j}^{-1}
&\leq 
\sum_{j=1}^{n} \sum_{\nu \in \N} \beta_{\nu,j}^{-1} 
+
\sum_{\nu=1}^{n} \sum_{j \in \N} \beta_{\nu,j}^{-1} 
+
\sum_{\nu=n+1}^\infty \sum_{j=n+1}^ \infty
\beta_{\nu,j}^{-1} \\
&\leq 
n \cdot \sum_{\nu \in \N} \alpha_{\nu,1}^{-1} \cdot
\nu^\sigma
+
n^{\sigma+1} \cdot
\sum_{j \in \N} \alpha_{1,j}^{-\tau} 
+
\sum_{\nu=n+1}^\infty \sum_{j=n+1}^ \infty
\beta_{\nu,j}^{-1}, 
\end{align*}
see \eqref{g6b}, where the two single sums are finite by assumption.
Choose $\eps > 0$ and $n \geq \exp(2/\eps)$ such that
\[
\ln(\beta_{\nu,j}) \geq \eps \cdot \ln (\nu) \cdot \ln(j)
\]
for all $\nu,j \geq n$, see \eqref{g5}.
For $\nu,j$ as before we obtain
\[
\beta_{\nu,j}^{-1} =
\exp (- \ln (\beta_{\nu,j}))
\leq 
\exp (- \eps \cdot \ln (\nu) \cdot \ln(j)).
\]
Hence
\[
\sum_{j=n+1}^\infty \beta_{i,j}^{-1} \leq
\sum_{j = n+1}^\infty j^{-\eps \cdot \ln (i)}
\leq n^{-\eps\cdot\ln(i)+1}
\]
for every $i \geq n$, and therefore
\[
\sum_{i=n+1}^\infty
\sum_{j=n+1}^\infty \beta_{i,j}^{-1} \leq
n \cdot 
\sum_{i=n+1}^\infty
i^{-\eps \cdot \ln (n)} < \infty.
\qedhere
\]
\end{proof}

\begin{lemma}\label{l5}
Let $q_j >0$ for $j \in \N$ and
\[
q := \liminf_{j \to \infty} \frac{q_j}{\ln(j)}. 
\]
Then we have
\[
q > 1 \quad \Rightarrow \quad
\sum_{j \in \N} \exp(-q_j) < \infty
\quad \Rightarrow \quad
q \geq 1.
\]
\end{lemma}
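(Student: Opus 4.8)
The plan is to prove both implications by comparison with the $p$-series $\sum_{j} j^{-s}$, exploiting that the two hypotheses are encoded in the limes inferior $q$ of the ratios $q_j/\ln(j)$. Only the behaviour as $j \to \infty$ matters, so the finitely many small indices (in particular $j=1$, where $\ln(j)=0$) can be ignored when deciding convergence.

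For the first implication I would assume $q > 1$ and fix some $s$ with $1 < s < q$. Since $q$ is a limes inferior, there is an index $j_0$ with
\[
q_j \geq s \cdot \ln(j) \qquad \text{for all } j \geq j_0,
\]
whence $\exp(-q_j) \leq j^{-s}$ for $j \geq j_0$. As $s > 1$, the tail $\sum_{j \geq j_0} j^{-s}$ converges, and adding the finitely many remaining terms gives $\sum_{j \in \N} \exp(-q_j) < \infty$.

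For the second implication I would argue by contraposition, assuming $q < 1$ and aiming at $\sum_{j} \exp(-q_j) = \infty$. Fixing $s$ with $q < s < 1$, the definition of the limes inferior furnishes infinitely many indices $j$ with $q_j < s \cdot \ln(j)$, so that $\exp(-q_j) > j^{-s}$ for each of them.

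The hard part is the closing step of this second implication. Having infinitely many terms bounded below by $j^{-s}$ does not by itself force divergence, since those indices could be sparse; the comparison with the divergent series $\sum_j j^{-s}$ (recall $s<1$) only delivers $\sum_j \exp(-q_j) = \infty$ once one knows that $q_j \le s\cdot\ln(j)$ holds for \emph{all} sufficiently large $j$, and not merely along a subsequence. Establishing this eventual lower bound on the tail is the main obstacle, and it is the point I would spend the most care on; once it is in place, $\exp(-q_j) \ge j^{-s}$ for all large $j$ together with $\sum_j j^{-s} = \infty$ closes the contrapositive.
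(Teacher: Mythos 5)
Your proof of the first implication is correct and is essentially identical to the paper's argument: pick $s \in (1,q)$, use the definition of the limes inferior to get $q_j \geq s \ln(j)$ for all $j \geq j_0$, and compare with the convergent series $\sum_j j^{-s}$. The genuine gap is the second implication, which you explicitly leave open, and the route you indicate for closing it cannot work. From $q < s < 1$ the limes inferior yields only \emph{infinitely many} indices $j$ with $q_j \leq s\ln(j)$; it puts no constraint whatsoever on the remaining indices, where $q_j/\ln(j)$ may be arbitrarily large. So the statement you say you would spend the most care on --- that $q_j \leq s\ln(j)$ holds for \emph{all} sufficiently large $j$ --- is simply not a consequence of $q<1$, and no refinement of this line of reasoning will produce it.

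The paper instead extracts a lacunary subsequence: there are indices $j_m$ with $q_{j_m} \leq \ln(j_m)$ and $j_{m+1} \geq 2 j_m$, and it then bounds
\[
\sum_{j\in\N} \exp(-q_j) \geq \sum_{m\geq 2} \exp(-q_{j_m})\cdot(j_m - j_{m-1})
\geq \sum_{m\geq 2} \frac{j_m - j_{m-1}}{j_m}
\geq \sum_{m\geq 2} \frac{1}{2} = \infty,
\]
so that the sparseness of the subsequence is compensated by the counting factor $j_m - j_{m-1}$. Note, however, that the first inequality needs $q_j \leq q_{j_m}$ for all $j \in (j_{m-1},j_m]$, i.e., it tacitly uses that $(q_j)_{j\in\N}$ is non-decreasing --- which is the case in the paper's applications, where $q_j$ is proportional to the smoothness exponents $r_j$, but is not among the stated hypotheses. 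Your worry about sparse indices is in fact fully justified: for a general positive sequence the second implication is \emph{false}. Take $q_j := \frac{1}{2}\ln(j)$ for $j = 2^{2^m}$, $m\in\N$, and $q_j := j$ otherwise; then $q = \frac{1}{2} < 1$, while
\[
\sum_{j\in\N}\exp(-q_j) \leq \sum_{j\in\N} e^{-j} + \sum_{m\in\N} 2^{-2^{m-1}} < \infty.
\]
So the gap in your proposal cannot be closed with the lemma's stated hypotheses alone; one needs an additional regularity assumption such as monotonicity of $(q_j)_{j\in\N}$, under which the block-counting argument above goes through.
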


\begin{proof}
Assume that $0 \leq q < 1$. 
Then there
exists a  sequence of integers $j_m$
such that $q_{j_m} \leq \ln(j_m)$ and $j_{m+1} \ge 2 j_m$.
Consequently,
\[
\sum_{j\in\N} \exp(-q_j) \geq
\sum_{m=2}^\infty \exp(-q_{j_m}) \cdot (j_m - j_{m-1})
\geq
\sum_{m=2}^\infty j_m^{-1} \cdot (j_m - j_{m-1}) =\infty,
\] 
where in the last step we used that $1 - j_{m-1}/j_m > 1/2$ for all $m\ge 2$.

Now we assume that $q > 1$.
Choose $\eps > 0$ such that $1+\eps < q$ and
$j_0 \in \N$ such that $q_j \geq (1+\eps) \cdot \ln (j)$
for every $j \geq j_0$. Then we  get
\[
\sum_{j \in \N} \exp(-q_j) \leq
j_0 -1 +
\sum_{j = j_0}^\infty j^{-(1+\eps)} < \infty.
\qedhere
\]
\end{proof}

\begin{exmp}
Consider the limiting case $q=1$ in Lemma \ref{l5}.
For $q_j = \ln(j)$
we have $q=1$ and
\[
\sum_{j \in \N} \exp(-q_j) = \sum_{j \in \N} j^{-1} = \infty.
\]
For $q_j = \ln(j) + 2\ln\ln(j)$
we have $q=1$ as well, but
\[
\sum_{j \in \N} \exp(-q_j) = \sum_{j \in \N} j^{-1}
\cdot (\ln(j))^{-2}< \infty.
\]
\end{exmp}

\section{$L_2$-Approximation in Haar Spaces}\label{a3}

For $n \in \N_0$ we put
$M := \{0,\dots,2^n-1\}$, and for $m \in M$ we consider the intervals
\[
\phantom{\qquad\quad m < 2^n-1,}
I_m := {[m/2^n,(m+1)/2^n[},
\qquad\quad m < 2^n-1,
\]
as well as
\[
I_{2^n-1} := [(2^n-1)/2^n,1]. 
\]
Moreover,
let $T_n(f)$ be the piecewise constant interpolation of 
$f \colon [0,1] \to \C$ on these intervals, based on the values of
$f$ at the respective midpoints.

\begin{theo}\label{t95}
Assume that $H_0 = L_2([0,1],\mu_0)$ for the uniform distribution
$\mu_0$. Consider the Haar basis $(e_\nu)_{\nu \in \N_0}$ according to 
Example~\ref{eha} and the Fourier weights according to Example~\ref{ex1}.
Furthermore, assume that $H_1$ is a reproducing kernel Hilbert space, 
i.e., $r_1>1$. Then there exists a constant $C> 0$ such that 
\begin{align}\label{eq0appc}
\sup\{\|f-T_n(f)\|_0\colon f\in H_1,\ \|f\|_{1}\leq 1\}
\leq C\cdot {2^{-n \cdot r_1/2}}
\end{align}
for all $n\in\N_0$. Furthermore, 
\begin{align}\label{eqappc}
\err_n(H_1,S,\bstd)
\asymp
\err_n(H_1,S,\aall)
\asymp
n^{-r_1/2}
\end{align}
for the embedding $S\colon H_1\hookrightarrow H_0$.
\end{theo}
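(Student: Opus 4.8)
The plan is to compare the midpoint interpolation $T_n$ with the $L_2$-orthogonal projection onto step functions and to absorb the difference into a pointwise sampling estimate. Let $V_n\subseteq H_0$ be the $2^n$-dimensional space of functions that are constant on each interval $I_0,\dots,I_{2^n-1}$. A Haar function $e_\nu$ with $\nu\in I_\ell$ is constant on every $I_m$ exactly when $\ell<n$, so $\{e_0\}\cup\{e_\nu:\nu\in I_\ell,\ \ell<n\}$ is an orthonormal basis of $V_n$ and the orthogonal projection is $P_nf=\scp{f}{e_0}_0\,e_0+\sum_{\ell<n}\sum_{\nu\in I_\ell}\scp{f}{e_\nu}_0\,e_\nu$. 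Since $T_n$ reproduces $V_n$ (the midpoint value of a function constant on $I_m$ equals its value there) and $P_nf\in V_n$, linearity of point evaluation gives $T_nf=P_nf+T_ng$ with $g:=f-P_nf=\sum_{\ell\ge n}\sum_{\nu\in I_\ell}\scp{f}{e_\nu}_0\,e_\nu$. Hence $f-T_nf=g-T_ng$ and
\[
\|f-T_nf\|_0\le\|g\|_0+\|T_ng\|_0,
\]
so it suffices to bound both summands by a constant times $2^{-nr_1/2}\|f\|_1$.

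Writing $c_\nu:=\scp{f}{e_\nu}_0$ and using that \eqref{condpower} together with $a_\nu\asymp\nu$ gives $\alpha_{\nu,1}\asymp 2^{\ell r_1}$ for $\nu\in I_\ell$, the projection error is immediate:
\[
\|g\|_0^2=\sum_{\ell\ge n}\sum_{\nu\in I_\ell}|c_\nu|^2\le C\sum_{\ell\ge n}2^{-\ell r_1}\sum_{\nu\in I_\ell}\alpha_{\nu,1}|c_\nu|^2\le C\,2^{-nr_1}\|f\|_1^2,
\]
using $2^{-\ell r_1}\le 2^{-nr_1}$ for $\ell\ge n$ and $\sum_{\ell,\nu}\alpha_{\nu,1}|c_\nu|^2\le\|f\|_1^2$.

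The main obstacle is the sampling term $\|T_ng\|_0$, which measures how midpoint evaluation aliases the high-frequency Haar coefficients. With $t_m:=(m+\tfrac12)2^{-n}$ the interpolant $T_ng$ equals the constant $g(t_m)$ on $I_m$, so $\|T_ng\|_0^2=2^{-n}\sum_{m=0}^{2^n-1}|g(t_m)|^2$ where $g(t_m)=\sum_{\ell\ge n}S_\ell(m)$ and $S_\ell(m):=\sum_{\nu\in I_\ell}c_\nu\,e_\nu(t_m)$. I would use two elementary facts about the Haar system: $|e_\nu(t_m)|\le 2^{\ell/2}$ for $\nu\in I_\ell$, and, for fixed $\ell\ge n$, the supports meeting the points $t_m$ overlap boundedly, each $e_\nu$ with $\nu\in I_\ell$ being relevant for at most one $m$. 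These give $\sum_m|S_\ell(m)|^2\le C\,2^\ell\sum_{\nu\in I_\ell}|c_\nu|^2\le C\,2^{\ell(1-r_1)}\|f\|_1^2$. Applying the Cauchy--Schwarz inequality in $\ell$ with the geometric weights $b_\ell:=2^{\ell(1-r_1)/2}$, which are summable precisely because $r_1>1$, I would obtain
\[
\sum_m|g(t_m)|^2\le\Big(\sum_{\ell\ge n}b_\ell\Big)\sum_{\ell\ge n}b_\ell^{-1}\sum_m|S_\ell(m)|^2\le C\,2^{n(1-r_1)}\|f\|_1^2,
\]
since both $\sum_{\ell\ge n}b_\ell$ and $\sum_{\ell\ge n}b_\ell^{-1}2^{\ell(1-r_1)}$ are of order $2^{n(1-r_1)/2}$. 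Multiplying by $2^{-n}$ gives $\|T_ng\|_0\le C\,2^{-nr_1/2}\|f\|_1$, which together with the projection bound proves \eqref{eq0appc}. The values $e_\nu(t_m)$ at the dyadic breakpoints depend on the chosen representative of the Haar functions, but the estimate uses only the bound $2^{\ell/2}$ and the bounded overlap, so it is insensitive to this choice; for the symmetric representative that vanishes at every breakpoint one even has $T_n=P_n$ and $T_ng=0$.

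Finally, I would deduce \eqref{eqappc} by sandwiching. The bound \eqref{eq0appc} shows that $T_n\in\bstd_{2^n}$ attains error at most $C\,2^{-nr_1/2}$; by monotonicity of the minimal errors, $\err_N(H_1,S,\bstd)\le C\,N^{-r_1/2}$ for every $N$. For the matching lower bound, recall from \eqref{g400} that the singular values of $S\colon H_1\hookrightarrow H_0$ are $\alpha_{\nu,1}^{-1/2}$, $\nu\in\N_0$; by \eqref{condpower} and $a_\nu\asymp\nu$ there are $2^\ell$ of them of order $2^{-\ell r_1/2}$ for each $\ell$, so their non-increasing rearrangement $(\sigma_N)_N$ satisfies $\sigma_N\asymp N^{-r_1/2}$. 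Since $L_2$-approximation with arbitrary linear information on a Hilbert space has minimal error equal to the corresponding singular value, \cite[Thm.~5.3.2]{TrWaWo88} gives $\err_N(H_1,S,\aall)=\sigma_{N+1}\asymp N^{-r_1/2}$. As $\bstd_N\subseteq\aall_N$ forces $\err_N(H_1,S,\aall)\le\err_N(H_1,S,\bstd)$, all three quantities lie between constant multiples of $N^{-r_1/2}$, establishing \eqref{eqappc}.
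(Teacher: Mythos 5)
Your proposal is correct and follows essentially the same route as the paper: the decomposition $f-T_nf=g-T_ng$ with $g$ the Haar tail $\sum_{\nu\ge 2^n}\scp{f}{e_\nu}_0\,e_\nu$, the trivial bound for $\|g\|_0$, a Cauchy--Schwarz estimate across levels $\ell\ge n$ exploiting the geometric decay from $r_1>1$ for $\|T_ng\|_0$, and the sandwich $\err_n(\bstd)\ge\err_n(\aall)\asymp n^{-r_1/2}$ via singular values for \eqref{eqappc}. The only difference is cosmetic: where the paper computes $T_n(e_\nu)$ explicitly (its formula \eqref{eqtnu} with the indices $k(\ell,m)$ and values $c(\ell)$), you replace this by a bounded-overlap count of Haar supports at the midpoints, which yields the same level-wise estimate $\sum_m|S_\ell(m)|^2\le C\,2^{\ell(1-r_1)}\|f\|_1^2$.
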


\begin{proof}
Fix $n\in\N_0$, and let $f:=\sum_{\nu\in\N_0}a_\nu e_\nu$ with 
$a_\nu\in\C$ such that $a_\nu \neq 0$ for only finitely many 
$\nu \in \N_0$. Put 
\[
k(\ell,m):=
2^\ell+
m2^{\ell-n}
+
\begin{cases}
0, & \text{if $\ell=n$},\\
2^{\ell-n-1}, & \text{if $\ell>n$},
\end{cases}
\]
as well as
\[
c(\ell):=
2^{\ell/2}\cdot
\begin{cases}
-1, & \text{if $\ell=n$},\\
+1, & \text{if $\ell>n$},
\end{cases}
\]
for $\ell \geq n$ and $m \in M$.
Observe that 
\begin{align}\label{eqtnu}
\begin{aligned}
T_n(e_\nu) = 
\begin{cases}
e_\nu, & \text{if $\nu \leq 2^n-1$},\\
c(\ell) \cdot 1_{I_m}, & \text{if $\nu=k(\ell,m)$ for $\ell\geq n$ and $m
\in M$},\\
0, & \text{otherwise}.
\end{cases}
\end{aligned}
\end{align}
>From \eqref{eqtnu} we get
\begin{equation}\label{eq1appc}
\|f-T_n(f)\|_0
\leq \bigl\|\sum_{\nu\geq 2^n}a_\nu e_\nu\bigr\|_0
+\bigl\|\sum_{\nu\geq 2^n}a_\nu T_n(e_\nu)\bigl\|_0,
\end{equation}
and obviously
\begin{align}\label{eq2appc}
\bigl\|\sum_{\nu\geq 2^n}a_\nu e_\nu\bigl\|_0^2
=\sum_{\nu\geq 2^n}|a_\nu|^2
\leq 2^{-n \cdot r_1}
\sum_{\nu\in\N_0} |a_\nu|^2 \nu^{r_1}.
\end{align}
Furthermore, \eqref{eqtnu} yields
\begin{align*}
\sum_{\nu\geq 2^n}a_\nu T_n(e_\nu)
=
\sum_{m \in M} \sum_{\ell \geq n} a_{k(\ell,m)} T_n(e_{k(\ell,m)})
=
\sum_{m \in M} \Bigl(\sum_{\ell \geq n} a_{k(\ell,m)} \, c(\ell)
\Bigr) \cdot  1_{I_m}.
\end{align*}
Note that 
$\scp{1_{I_{m}}}{1_{I_{m^\prime}}}_0$=0 for $m,m^\prime \in M$
with $m \neq m^\prime$. Therefore
\begin{align*}
\bigl\|\sum_{\nu\geq 2^n}a_\nu T_n(e_\nu)\bigr\|_0^2
= \sum_{m\in M}\Bigl|\sum_{\ell \geq n} a_{k(\ell,m)} \, c(\ell)
\Bigr|^2 2^{-n}.
\end{align*}
Hence the Cauchy-Schwarz inequality and the fact 
$k(\ell,m)\geq 2^\ell$ yield
\begin{align*}
\bigl\|\sum_{\nu\geq 2^n}a_\nu T_n(e_\nu)\bigr\|_0^2
&\leq 2^{-n}\sum_{m\in M}
\Bigr(
\sum_{\ell\geq n}|a_{k(\ell,m)}|^2 {|k(\ell,m)|}^{r_1} \cdot
\sum_{\ell\geq n}{|k(\ell,m)|}^{-r_1} 2^\ell\Bigl)\\
&\leq 2^{-n}\sum_{\ell\geq n}2^{\ell (1-r_1)}
\cdot
\sum_{\nu\in\N_0} |a_\nu|^2 \nu^{r_1}\\
&= c^2 \cdot
2^{-n \cdot r_1}\sum_{\nu\in\N_0}|a_\nu|^2 \nu^{r_1}
\end{align*}
with $c = (1-2^{1-r_1})^{-1/2}$.
Combing this estimate with \eqref{eq1appc} and \eqref{eq2appc} yields
\begin{align*}
\|f-T_n(f)\|_0\leq
(1+c) \cdot 2^{-n\cdot r_1/2}
\bigg(\sum_{\nu\in\N_0}|a_\nu|^2 \nu^{r_1}\bigg)^{1/2},
\end{align*}
which shows \eqref{eq0appc}.
It remains to observe that
\begin{align*}
\err_n(H_1,S,\bstd)
\geq
\err_n(H_1,S,\aall)
\succeq
n^{-r_1/2}
\end{align*}
to complete the proof of \eqref{eqappc}.
\end{proof}

\subsection*{Acknowledgment}
The authors would like to thank Michael Griebel and Henryk Wo\'zniakowski 
for interesting discussions and valuable comments. 

The work on this paper was initiated during a special semester at the 
Institute for Computational and Experimental Research in Mathematics 
(ICERM) of Brown University. Part of the work was done during a
visit of the University of New South Wales (UNSW) in Sydney and 
during a special semester at the Erwin Schr\"odinger International 
Institute for Mathematics and Physics (ESI) in Vienna.

M.~Hefter was supported by the Austrian Science Fund (FWF), 
Project F5506-N26.
A.~Hinrichs is supported by the Austrian Science Fund (FWF), Project 
F5509-N26. Both projects are part of the Special Research Program 
``Quasi-Monte Carlo Methods: Theory and Applications''.
K.~Ritter was partially supported as a visiting professor
at Kiel University.

\bibliographystyle{siam}
\bibliography{references}

\end{document}